\def\HB {\hfill\break}
\def\div_h{\mathrm{div_h}\,}
\def\div{\mathrm{div}\,}
\def \R {\mathbb R}
\def\P{{\mathbb{P}}}
\def\DD{{\mathcal{D}}}
\begin{document}

\title*{Anisotropic Navier-Stokes equations in a
bounded cylindrical domain}

\author{Marius Paicu and Genevi\`{e}ve Raugel}
\institute{Univ Paris-Sud and CNRS, Laboratoire de
Math\'{e}matiques d'Orsay,
Orsay Cedex, F-91405, FRANCE}

\maketitle

\section{Introduction}

Navier-Stokes equations with anisotropic viscosity are classical in
geophysical fluid dynamics. Instead of choosing a classical viscosity
$-\nu (\partial_1^2 +\partial_2^2 +\partial_3^2)$ in the case of
three-dimensional fluids, meteorologists often modelize turbulent
flows by putting a viscosity of the form $-\nu_h (\partial_1^2
+\partial_2^2) - \nu_v \partial_3^2$, where $\nu_v$ is usually much
smaller than $\nu_h$ and thus can be neglected (see Chapter 4 of the book of
Pedlovsky \cite{Pedlovsky} for a detailed discussion). \HB
More precisely, in geophysical fluids, the rotation of the earth
plays a primordial role. This Coriolis force introduces a penalized
skew-symmetric term $\varepsilon^{-1} u\times e_3$ into the equations,
where $\varepsilon >0$ is the Rossby number and $e_3=(0,0,1)$ is the
unit vertical vector. This leads to an asymmetry between the horizontal and
vertical motions. By the Taylor-Proudman theorem (see \cite{Pedlovsky} and
\cite{Taylor}), the fluid tends to have a two-dimensional behavior,
far from the boundary of the domain.
When the fluid evolves between two parallel plates with homogeneous
Dirichlet boundary conditions, Ekman boundary layers of the form
$U_{BL}(x_1,x_2,\varepsilon^{-1}x_3)$ appear near the boundary. In
order to compensate the term $\varepsilon^{-1} U_{BL} \times e_3$
  by the term $-\nu_v \partial_3^2 U_{BL}$, we need to
impose that $\nu_v= \beta \varepsilon$, for $\beta >0$ (see
\cite{GrMa97} and also \cite{CDGG06}).

\vskip 2mm

When the fluid occupies the whole space, the Navier-Stokes equations
with vanishing or small vertical viscosity are as follows
\begin{equation}
\label{equ1}
\begin{split}
&\partial_t u+u\nabla u-\nu_h(\partial_1^2 + \partial_2^2) u - \nu_v
\partial_3^2 u=
-\nabla p  ~\mbox{ in } \R^3~, \quad t >0 \\
&\div u=0  ~\mbox{ in } \R^3 ~, \quad t >0 \\
&u|_{t=0}=u_0 ,
\end{split}
\end{equation}
where $\nu_h >0$ and $\nu_v \geq 0$ represent the horizontal and
vertical viscosities and,  where $u= (u_1,u_2,u_3)$ and $p$ are the
vector field of the velocities and the pressure respectively.
In the case of vanishing vertical viscosity,
the classical theory of the Navier-Stokes equations does not apply
and new difficulties arise. Some partial
$L^2$-energy estimates still hold for System \eqref{equ1}, but they
do not allow to pass to the limit and to obtain a weak solution like
in the well-known construction of weak Leray solutions. One cannot
either directly use the results of Fujita and Kato on the existence of strong
solutions. Of course, neglecting the horizontal viscosity and
requiring a lot of regularity on the initial data, one can prove the
local existence of strong solutions by working in the frame of
hyperbolic symmetric systems. Thus, new methods have to be developed.

In the case $\nu_v =0$, this system has been first studied by Chemin,
Desjardins Gallagher and Grenier in \cite{CDGG00}, who showed local
and global existence of solutions in anisotropic Sobolev spaces which
take into account this anisotropy.
  More
precisely, for $s \geq 0$, let us introduce the anisotropic Sobolev
spaces,
$$
H^{0,s}= \{ u \in L^2(\R^3)^3 \, | \,  \|u\|_{H^{0,s}}^2 =
\int_{\R^3} (1 +|\xi_3|^2)^s |\widehat{u}(\xi)|^2 d\xi < +\infty\}
$$
and $H^{1,s}=\{ u \in H^{0,s} \, | \,\partial_i u \in  H^{0,s}
\, , \, i=1,2\}$ (such anisotropic spaces have been introduced in
\cite{If99} for the study of the Navier-Stokes system in thin domains). \HB
In \cite{CDGG00}, the authors showed that, for any $s_0 >1/2$, and
any $u_0 \in H^{0,s_0}$, there exist $T>0$ and a local solution $u \in
L^{\infty}((0,T), H^{0,s_0}) \cap L^2((0,T), H^{1,s_0})$ of the
anisotropic Navier-Stokes equations \eqref{equ1}. If $\|u_0\|_{H^{0,s_0}}
\leq c\nu_h$, where $c>0$ is a small constant, then the solution is
global in time. In the same paper, the authors proved that there
exists at most one solution $u(t)$ of the equations \eqref{equ1} in
the space $L^{\infty}((0,T), H^{0,s}) \cap L^2((0,T), H^{1,s})$ for
$s > 3/2$. In these results, there was a gap between the regularity
required for the existence of solutions and the one required for the
uniqueness. This gap was filled by Iftimie (\cite{If02}) who showed
the uniqueness of the solution in the space
$L^{\infty}((0,T), H^{0,s}) \cap L^2((0,T), H^{1,s})$ for
$s > 1/2$. \HB
Like the classical Navier-Stokes equations, the system \eqref{equ1} on
the whole space $\R^3$ has a scaling. Indeed, if $u$ is a
solution of the equations \eqref{equ1} on a time interval $[0,T]$
with initial data $u_0$, then  $u_{\mu} (t,x)= \mu u(\mu^2 t, \mu x)$
is also a solution of \eqref{equ1} on the time interval $[0,
\mu^{-2}T]$, with initial data $\mu u_0(\mu x)$. This was one of the
motivations of Paicu for considering initial data in the scaling invariant
Besov space $\mathcal{B}^{0,1/2}$. In \cite{Pai05bis}, Paicu proved
the local existence and uniqueness of the solutions of \eqref{equ1},
for initial data $u_0 \in \mathcal{B}^{0,1/2}$. He showed global
existence of the solution when the initial data
in $\mathcal{B}^{0,1/2}$ are small, compared to the horizontal
viscosity $\nu_h$ (for further details, see \cite{Pai04,Pai05,
Pai05bis}).
Very recently, in \cite{CheminZhang}, Chemin and Zhang
introduced the scaling invariant Besov-Sobolev spaces
$\mathcal{B}^{-1/2,1/2}_4$ and showed existence of global solutions
when the initial data $u_0$ in $\mathcal{B}^{-1/2,1/2}_4$ are small
compared to the horizontal viscosity $\nu_h$. This result implies
global wellposedness of \eqref{equ1} with high oscillatory initial
data.
\vskip 1mm

Notice that, in all the above results as well as in this paper, one
of the key observations is that, in the various essential energy
estimates, the partial derivative $\partial_3$ appears only when
applied to the component $u_3$ in terms like $u_3 \partial_3 u_3$. 
Even if there is no vertical
viscosity and thus no smoothing in the vertical variable, the divergence-free
condition implies that $\partial_3 u_3$ is regular enough to get good
estimates of the nonlinear term.
\vskip 2mm

Considering the anisotropic Navier-Stokes equations on the whole
space $\R^3$ (or on the torus $\mathbf{T}^3$) instead of on a bounded
domain with boundary leads to some simplifications. For example, the Stokes
operator coincides with the operator $-\Delta$ on the space of smooth
divergence-free vectors fields. Also one can use Fourier transforms
and the Littlewood-Paley theory. \HB
One of the few papers considering
the anisotropic Navier-Stokes equations on a domain with a boundary
is the article of Iftimie and Planas \cite{IfPl}, who studied the
anisotropic equations \eqref{equ1} on a half-space $\mathcal{H}$,
supplemented with the boundary condition
\begin{equation}
\label{DH}
u_3 =0 ~\mbox{ in } \partial \mathcal{H} ~, \quad t >0.
\end{equation}
This system of equations can be reduced to the case of the whole space
$\R^3$.  Indeed, let $w$ be a solution of the anisotropic Navier-Stokes
equations on the half-space $\mathcal{H}$.  Extending the components
$w_1$ and $w_2$ to $\R^3$ by an even reflection and the third component
$w_3$ by an odd reflection with respect to the plane $x_3=0$, we
obtain a vector field $\tilde{w}$, which is a solution of the
equations \eqref{equ1} on the whole space $\R^3$. Conversely, if
$u(t)$ is a solution of the anisotropic Navier-Stokes equations on the
whole space $\R^3$ with initial data $u_0 \in H^{0,1}$ satisfying the 
condition $u_{0,3}=0$ on $\partial \mathcal{H}$, then the restriction
of $u(t)$ to $\mathcal{H}$ is a solution of the anisotropic
Navier-Stokes equations on $\mathcal{H}$, satisfying the condition
\eqref{DH}. For initial data $u_0 \in L^2(\mathcal{H})^3$,
satisfying the condition $\partial_3 u_0 \in L^2(\mathcal{H})^3$,
Iftimie and Planas showed that the solutions of
the anisotropic Navier-Stokes equations on $\mathcal{H}$ are limits
of solutions $u_{\varepsilon}$ of the Navier-Stokes equations on 
$\mathcal{H}$ with
Navier boundary conditions on the boundary $\partial \mathcal{H}$ and
viscosity term $-\nu_h (\partial_1^2 + \partial_2^2) u_{\varepsilon} 
- \varepsilon
\partial_3^2 u_{\varepsilon}$ on a time interval $(0,T_0)$, where
$T_0 >0$ is independent of $\varepsilon$. If the initial data $u_0$
are small with respect to the horizontal viscosity $\nu_h$, they
showed the convergence on the infinite time interval $(0,+\infty)$.
In what follows, in our study of the anisotropic Navier-Stokes
equations on a bounded domain, we are also going to introduce an
auxiliary Navier-Stokes system with viscosity
$-\nu_h (\partial_1^2 + \partial_2^2) u_{\varepsilon} - \varepsilon
\partial_3^2 u_{\varepsilon}$ (see System $(NS_{\varepsilon})$
below), but instead of considering Navier boundary conditions in the
vertical variable, we will choose periodic conditions.

\vskip 4mm
In this paper, we study the global and local existence and uniqueness
of solutions to the anisotropic Navier-Stokes equations on a
bounded product domain of the type $Q =\Omega \times (0,1)$, where
$\Omega$ is a smooth domain, with homogeneous Dirichlet boundary conditions
on the lateral boundary $\partial \Omega \times (0,1)$.  For sake of
simplicity, we assume that $\Omega$ is a star-shaped domain. We denote
$\Gamma_0=\Omega \times\{0\}$ and $\Gamma_1=\Omega\times\{1\}$ the
top and the bottom of $Q$. More precisely, we consider the
system of equations
\begin{equation}(NS_h)
\begin{cases}
\partial_t u+u\nabla u-\nu_h \Delta_h u=
-\nabla p  ~\mbox{ in } Q~, \quad t >0 \\
\div u=0  ~\mbox{ in } Q ~, \quad t >0 \\
u|_{\partial\Omega\times (0,1)}=0~, \quad t >0 \\
u_3|_{\Gamma_0\cup\Gamma_1}=0 ~, \quad t >0 \\
u|_{t=0}=u_0 \in H^{0,1}(Q),
\end{cases}
\end{equation}
where the operator $\Delta_h=\partial_{x_1}^2+\partial_{x_2}^2$
denotes the horizontal Laplacian and $\nu_h>0$ is the horizontal
viscosity. Here $u \equiv(u_1,u_2,u_3) \equiv (u_h,u_3)$ is the 
vector field of velocities and $p$
denotes the pressure term. To simplify the discussion, we suppose that
the forcing term $f$ vanishes (the case of a non vanishing
forcing term as well as the asymptotic behaviour in time of the
solutions of $(NS_h)$ will be studied in a subsequent paper). \HB
Since the viscosity is anisotropic, we
want to solve the above system in the anisotropic functional space
$$
\widetilde{H}^{0,1}(Q)=\{u\in L^2(Q)^3\, | \, \div u=0\, ;\, \gamma_n u=0
\mbox{ on }\partial Q\, ; \, \partial_3 u\in L^2(Q)^3\},
$$
where $\gamma_n u$ is the extension of
the normal trace $u \cdot n$
  to the space $H^{-1/2}(\partial Q)$. Since $u$ belongs to
$L^2(Q)^3$ and that $\div u =0$, $\gamma_n u$ is well defined and
belongs to $H^{-1/2}(\partial Q)$.
For later use, we also introduce the space
$$
H^{0,1}(\Omega \times (a,b) )= \{u \in L^2(\Omega \times (a,b))^3\, | 
\, \div u=0\, ; \,
\partial_3 u \in L^2(\Omega \times (a,b))^3\},
$$
where $-\infty < a <b <+\infty$. Clearly, $\widetilde{H}^{0,1}(Q)$ is
a closed subspace of $H^{0,1}(Q)$.
\vskip 1mm

Considering a bounded domain $Q$ with lateral Dirichlet boundary conditions
instead of working with periodic boundary conditions for example
introduces a new difficulty.  In particular, we have to justify that
these Dirichlet boundary conditions make sense.  Before stating
existence results of solutions $u(t)$ to the system $(NS_h)$, we
describe our strategy to solve this problem.  One way for solving
System $(NS_h)$ consists in adding an artificial viscosity term
$-\varepsilon \partial_3^2 u$ (where $\varepsilon >0$) to the first
equation in $(NS_h)$ and in replacing the initial data $u_0$ by more
regular data $u_0^{\varepsilon}$, that is, in solving the system
\begin{equation}(NS_{h,\varepsilon})
\begin{cases}
\partial_t u+u\nabla u-\nu_h \Delta_h u-\varepsilon\partial_{3}^2
u=-\nabla p  ~\mbox{ in } Q~, \quad t >0 \\
\div u=0 ~\mbox{ in } Q ~, \quad t >0 \\
u|_{\partial\Omega\times (0,1)}=0~, \quad t >0 \\
u_3|_{\Gamma_0\cup\Gamma_1}=0 ~, \quad t >0 \\
u|_{t=0}=u_0^{\varepsilon}~,
\end{cases}
\end{equation}
where the initial data $u_0^{\varepsilon}$ are chosen in the functional space
$$
{\tilde H}^1_0(Q)=\{u \in H^1(Q)^3\, | \, \div u=0\, ;\, 
u|_{\partial\Omega\times
(0,1)}=0\,; \, u_3=0 \mbox{ on } \Gamma_0 \cup \Gamma_1\},
$$
and are close to $u_0$. In what follows, we will actually consider a
sequence of positive numbers $\varepsilon_n$ converging to $0$, when
$n$ goes to infinity.  Thus, we will also choose a sequence of initial data
$u_0^{\epsilon_n} \in {\tilde H}^1_0(Q)$
converging to $u_0$ in the space $\widetilde{H}^{0,1}(Q)$ when $n$
goes to infinity.  A
choice of such a sequence is possible since the space ${\tilde
H}^1_0(Q)$
is dense in $\widetilde{H}^{0,1}(Q)$ (see Lemma \ref{dense} in the
next section). \HB
Notice that, for $\varepsilon >0$, we need to replace $u_0$ by more
regular initial data $u_0^\epsilon$ in order to be able to apply the Fujita-Kato
theorem.

However, the system of equations $(NS_{h,\varepsilon})$ is still not a
classical system. Indeed, we need to impose boundary conditions to the
horizontal part $u_h(t,x_h, x_3)$ on the top $x_3=1$ and the bottom
$x_3=0$ (that is on $\Gamma_0 \cup \Gamma_1$). As in \cite{IfPl}, we
could impose Navier-type boundary conditions to the horizontal part
on $\Gamma_0$ and $\Gamma_1$. Here we will take another path. We will
extend the velocity field $u$ by symmetry to the domain 
$\tilde{Q}=\Omega\times (-1,1)$
and then solve the equations $(NS_{h,\varepsilon})$ on the symmetrical
domain $\tilde{Q}$ by
imposing homogeneous Dirichlet boundary conditions on the lateral
boundary and periodic conditions in the vertical variable $x_3$.
More precisely, let $u$ be a vector in ${\tilde H}^1_0(Q)$. We extend  $u$
to a vector $\tilde{u} \equiv \Sigma u$ on $\tilde{Q} = \Omega\times (-1,1)$ by
setting
\begin{equation}
\label{symetrie}
\begin{split}
(\Sigma u)_i(x_h, -x_3) \equiv \tilde u_i(x_h,-x_3)=  &u_i(x_h, x_3), 
\quad  i=1,2,
\quad 0 \leq x_3\leq1\\
(\Sigma u)_3 (x_h, -x_3) = \tilde u_3 (x_h, -x_3)=& -u_3(x_h,x_3),
\quad 0 \leq x_3\leq 1 ,
\end{split}
\end{equation}
and $(\Sigma u) (x_h,x_3) \equiv \tilde u (x_h, x_3)=u(x_h, x_3)$ for 
$0\leq x_3\leq 1$. We
notice that the vector $\tilde u= \Sigma u$ belongs to the space
$H^1(\tilde Q)^3$.
We introduce the functional space
\begin{equation*}
\begin{split}
\tilde{V} \equiv  H^1_{0,per}(\tilde Q)=\{u\in H^1(\tilde Q)^3 \, | 
\, \div u=0\,
  ;\,  & u|_{\partial\Omega\times (0,1)}=0\, ; \cr
  &\, u(x_h, x_3) = u(x_h,x_3
  +2)\}.
  \end{split}
  \end{equation*}
  The vector $\tilde u =\Sigma u$ clearly belongs to the space 
$H^1_{0,per}(\tilde
  Q)$.  \HB
  We finally consider the problem
   \begin{equation}(NS_{\varepsilon})
\begin{cases}
\partial_t u_{\varepsilon}+u_{\varepsilon}\nabla u_{\varepsilon}
-\nu_h \Delta_h u_{\varepsilon}
-\varepsilon\partial_{x_3}^2
u_{\varepsilon}=-\nabla p_{\varepsilon}  ~\mbox{ in } \tilde{Q}~, \quad t >0 \\
\div u_{\varepsilon}=0  ~\mbox{ in } \tilde{Q}~, \quad t >0  \\
u_{\varepsilon}|_{\partial\Omega\times (-1,1)}=0 ~, \quad t >0 \\
u_{\varepsilon}(x_h,x_3)=u_{\varepsilon}(x_h,x_3+2) ~, \quad t >0 \\
u_{\varepsilon}|_{t=0} = u_{\varepsilon,0} \in H^1_{0,per}(\tilde Q).
\end{cases}
\end{equation}
According to the classical Fujita-Kato theorem (see \cite{FuKa}), for any
$u_{\varepsilon,0} \in \tilde{V}$, there exists a unique local strong
solution $u_{\varepsilon}(t) \in C^0([0, T_{\varepsilon}),
\tilde{V})$ of the Navier-Stokes equations $(NS_{\varepsilon})$.
Moreover, this solution is classical and belongs to
$C^0((0,T_{\varepsilon}), H^2(\tilde{Q})^3) \cap
C^1((0,T_{\varepsilon}), L^2(\tilde{Q})^3)$. If the time existence
interval is bounded, that is, if $T_{\varepsilon} < + \infty$, then
\begin{equation}
\label{explose}
\|u_{\varepsilon}(t)\|_{\tilde{V}} \rightarrow_{t \rightarrow
T_{\varepsilon}^{-}} +\infty.
\end{equation}
We next introduce the ``symmetry map" $S:  u \in \tilde{V} \mapsto Su
\in \tilde{V}$ defined as follows
\begin{equation*}
\begin{split}
&(Su)_i(x_h, -x_3) = u_i(x_h,x_3)~, \quad i=1,2, \cr
&(Su)_3(x_h, -x_3)=-u_3(x_h,x_3) .
\end{split}
\end{equation*}
We remark that, if $u_{\varepsilon}(t) \in C^0([0,T_{\varepsilon}),
\tilde{V})$ is a solution of the Navier-Stokes equations
$(NS_{\varepsilon})$, then $Su_{\varepsilon} \in  C^0([0,T_{\varepsilon}),
\tilde{V})$ is a solution of the equations $(NS_{\varepsilon})$ with
$Su_{\varepsilon}(0)= Su_{\varepsilon,0}$. If
$u_{\varepsilon,0} = \Sigma u_0$, where $u_0$
belongs to ${\tilde H}^1_0(Q)$, then, $Su_{\varepsilon,0}=
u_{\varepsilon,0}$ and by the above
uniqueness property, the solutions $Su_{\varepsilon}(t)$ and 
$u_{\varepsilon}(t)$
coincide. This implies in particular that
$u_{\varepsilon,3}(t)$ vanishes
on $\Gamma_0 \cup \Gamma_1$ for any $t \in [0,T_{\varepsilon})$.
Since $u_{\varepsilon}(t)=Su_{\varepsilon}(t)$ belongs to 
$C^0((0,T_{\varepsilon}),
H^2(\tilde{Q}))$, this also implies that $\partial_3 u_{\varepsilon,h}$
vanishes on $\Gamma_0 \cup \Gamma_1$ for any $t \in
(0,T_{\varepsilon})$ and for any $\varepsilon>0$.
\vskip 2mm

In what follows, we denote by $\nabla_h $ the gradient operator
in the horizontal direction, that is, the gradient with respect to
the variables $x_1$ and $x_2$.
To summarize, for any $u_0 \in \widetilde{H}^{0,1}(Q)$, we will construct a
(unique) local (respectively global) solution $\bar{u} \in
L^{\infty}((0, T_0),\widetilde{H}^{0,1}(Q))$, with $\nabla_h \bar{u} \in
L^{2}((0, T_0),H^{0,1}(Q))$ (respectively $\bar{u} \in
L^{\infty}((0, +\infty),\widetilde{H}^{0,1}(Q))$, with $\nabla_h 
\bar{u}$ in the
space $L^{2}_{loc}((0, +\infty),H^{0,1}(Q))$) by proceeding as
follows. We consider a (decreasing) sequence of positive numbers 
$\varepsilon_m$ converging
to zero and, using Lemma \ref{dense}, a sequence of initial data
$u_0^m \in {\tilde H}^1_0(Q)$ converging to $u_0$ in $\widetilde{H}^{0,1}(Q)$,
when $m$ goes to infinity. Then, for each $m$, we solve the
problem $(NS_{\varepsilon_m})$ with initial data $u_{\varepsilon_m,0} =
\Sigma u_0^m$. We thus obtain a unique local (respectively global)
solution $u_{\varepsilon_m}(t) \in C^0((0, T),\tilde{V})$ of the problem
$(NS_{\varepsilon_m})$ where $0 <T <+ \infty$ (respectively
$T=+\infty$). We show that this sequence $u_{\varepsilon_m}(t)$
is uniformly bounded in $L^{\infty}((0, T_0),H^{0,1}(\tilde{Q}))$ and that
the sequence $\nabla_h u_{\varepsilon_m}(t)$ is uniformly bounded in
$L^{2}((0, T_0),H^{0,1}(\tilde{Q}))$, where $T_0>0$ is independent of
$m$ (depending only on $u_0$). If the initial data are small enough,
we show the global existence of these solutions, again with bounds in
$L^{\infty}((0, +\infty),H^{0,1}(\tilde{Q}))$ and $L^{2}((0, 
+\infty),H^{0,1}(\tilde{Q}))$,
independent of $m$. Using these uniform bounds, we show that the
sequences $u_{\varepsilon_m}(t)$ and $\nabla_h u_{\varepsilon_m}(t)$
are Cauchy sequences in $L^{\infty}((0, T_0), L^2(\tilde{Q}))$ and
in $L^{2}((0, T_0), L^2(\tilde{Q}))$ respectively, which implies the existence
of the solution $\bar{u} \in
L^{\infty}((0, T_0),\widetilde{H}^{0,1}(Q))$ of $(NS_h)$, with 
$\nabla_h \bar{u} \in
L^{2}((0, T_0), H^{0,1}(Q))$. The uniqueness of the solution $\bar{u}$
is straightforward and is proved in the same way as the Cauchy property
of the sequence $u_{\varepsilon_m}(t)$.

This paper is organized as follows. In the second section, we
introduce several notations and spaces. We also prove auxiliary
results, which will be used in the next sections.
In the third section, we show global existence results under
various smallness assumptions. The
fourth section is devoted to the proof of the local existence of
solutions for general initial data.

  \section{Preliminaries and auxiliary results}

  In what follows, for any $u$ in $H^{0,1}(Q)$, we will often use the
  notation $\div_h u \equiv \div_h u_h= \partial_1 u_1 + \partial_2
  u_2$. This quantity is well defined since, by the divergence free
  condition, $\div_h u_h =- \partial_3 u_3$.

  We begin this section by proving the density of ${\tilde H}^1_0(Q)$ in
  the space $\widetilde{H}^{0,1}(Q)$. For sake of simplicity, we 
assume below that
  $\Omega$ is star-shaped. This hypothesis allows us to give a
  constructive proof of the density result. This density should be
  true even without this additional assumption.

  \begin{lemma} \label{dense}
  Let $\Omega$ be a smooth bounded domain, which is star shaped. Then the space
  ${\tilde H}^1_0(Q)$ is dense in the space $\widetilde{H}^{0,1}(Q)$.
  \end{lemma}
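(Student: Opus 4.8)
The plan is to prove the density by a three-step regularization/approximation procedure: first reduce to compactly supported (in $x_3$) vector fields by a vertical cutoff and translation, then mollify in the horizontal variables, and finally fix up the divergence-free condition and the lateral boundary condition using the star-shapedness of $\Omega$ via a dilation argument. Throughout, the main point to keep track of is that we must approximate in the $\widetilde H^{0,1}(Q)$-norm, i.e.\ control $u$ and $\partial_3 u$ in $L^2(Q)^3$ simultaneously, while only the horizontal derivatives $\partial_1 u,\partial_2 u$ need to be created (they need not be controlled in the target norm), and the divergence-free and normal-trace conditions must be preserved exactly at every stage.

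First I would observe that it suffices to approximate a dense subclass of $\widetilde H^{0,1}(Q)$. Take $u\in\widetilde H^{0,1}(Q)$. Since $\partial_3 u\in L^2(Q)^3$, the field $u$ has a trace on $\Gamma_0\cup\Gamma_1$ in a suitable sense, but more importantly $\div u=0$ forces $\div_h u_h=-\partial_3 u_3\in L^2(Q)$. The key structural fact (used everywhere in the paper) is that the bad derivative $\partial_3$ only ever hits $u_3$, and $\partial_3 u_3$ is as regular as $u$ itself. I would first smooth in the vertical variable: because of the $x_3$-periodicity built into the symmetric picture (or, working directly on $Q$, by reflecting as in \eqref{symetrie} to $\tilde Q=\Omega\times(-1,1)$ and using periodicity), one can convolve $u$ with a mollifier in $x_3$ alone. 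This produces $u^\delta$ with $u^\delta,\partial_3 u^\delta\in L^2$, $u^\delta\to u$ in $\widetilde H^{0,1}$, $u^\delta$ smooth in $x_3$, still divergence-free, still with $\gamma_n u^\delta=0$ on $\partial\Omega\times(0,1)$ (horizontal mollification commutes with the lateral boundary only after the next step, so here we mollify only in $x_3$, which does not touch the lateral boundary condition) and, thanks to the odd reflection of $u_3$, with $u^\delta_3=0$ on $\Gamma_0\cup\Gamma_1$.

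Next I would address the horizontal smoothness and the lateral boundary condition $u|_{\partial\Omega\times(0,1)}=0$, which is the genuine obstacle and the only place star-shapedness is needed. The issue is that a field in $\widetilde H^{0,1}$ is merely $L^2$ in the horizontal variables, so its restriction to the lateral boundary is not even defined in the classical sense — only the full normal trace $\gamma_n u=u\cdot n$ makes sense. Assuming $\Omega$ is star-shaped with respect to $0$, I would use the anisotropic dilation $u_\lambda(x_h,x_3):=u(\lambda x_h,x_3)$ for $\lambda>1$, which is defined on a neighbourhood of $\overline Q$, is still divergence-free in $x_h$ in the right sense (one must rescale: $\div_h$ of $u_\lambda$ picks up a factor $\lambda$, so to stay exactly divergence-free I would instead set $u_\lambda(x_h,x_3)=\big(\lambda^{-1}u_h(\lambda x_h,x_3),u_3(\lambda x_h,x_3)\big)$, which makes $\div u_\lambda(x_h,x_3)=\lambda^{-1}(\div_h u_h)(\lambda x_h,x_3)+\partial_3 u_3(\lambda x_h,x_3)$ — so in fact one rescales $x_3$ too, or accepts a divergence that is small and corrects it with a Bogovskii-type operator at the end). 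After composing the vertically-mollified field with such a dilation, its support in $x_h$ is strictly inside $\Omega$, so a standard horizontal mollification $u_\lambda\ast_h\rho_\eta$ is well defined, smooth, supported away from $\partial\Omega\times(-1,1)$, hence vanishes on the lateral boundary; it also remains $x_3$-smooth and converges to $u$ in $\widetilde H^{0,1}$ as $\lambda\to1$, $\eta\to0$, in that order. The resulting field lies in $H^1(Q)^3$, satisfies the Dirichlet condition on the lateral boundary, has $u_3=0$ on $\Gamma_0\cup\Gamma_1$, and is divergence-free — i.e.\ it belongs to $\tilde H^1_0(Q)$.

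The main obstacle, as indicated, is reconciling the dilation (needed to create room near $\partial\Omega$ for horizontal mollification) with the exact divergence-free constraint: a naive dilation destroys $\div u=0$. I expect to handle this by one of two routes — either a joint anisotropic scaling $u_\lambda(x)=(\lambda^{-1}u_h(\lambda x_h,x_3),\,u_3(\lambda x_h,x_3))$ combined with a correction of the (small, compactly supported, mean-zero in each horizontal slice) residual divergence via a Bogovskii right inverse of $\div_h$ acting slicewise in $\Omega$, with norms controlled uniformly in $x_3$ and hence in $L^2_{x_3}$ together with its $\partial_3$; or, more cleanly, by first subtracting from $u$ a gradient-like potential part so that the remainder has vanishing normal trace and can be dilated harmlessly. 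Either way the correction terms are lower order and vanish as $\lambda\to1$, so the limit is the desired approximation. A final diagonal extraction over the three parameters $(\delta,\lambda,\eta)$ yields a sequence in $\tilde H^1_0(Q)$ converging to $u$ in $\widetilde H^{0,1}(Q)$, proving the lemma.
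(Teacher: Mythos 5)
There is a genuine gap, and it sits exactly at the step you identify as the main obstacle. You claim that no simple dilation preserves the divergence-free condition exactly, and on that basis you defer the heart of the proof to an unexecuted ``Bogovskii-type correction'' or a ``gradient-like potential'' subtraction. But this difficulty is illusory, and your own formula already resolves it: with $u_\lambda(x_h,x_3)=\big(\lambda^{-1}u_h(\lambda x_h,x_3),\,u_3(\lambda x_h,x_3)\big)$ the chain rule gives $\partial_i\big[\lambda^{-1}u_i(\lambda x_h,x_3)\big]=(\partial_i u_i)(\lambda x_h,x_3)$ for $i=1,2$ (the factor $\lambda$ from the dilated argument cancels the prefactor $\lambda^{-1}$), so $\div u_\lambda(x_h,x_3)=(\div u)(\lambda x_h,x_3)=0$ exactly; your computation ``$\div u_\lambda=\lambda^{-1}(\div_h u_h)(\lambda x_h,x_3)+\partial_3 u_3(\lambda x_h,x_3)$'' drops that chain-rule factor. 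The paper uses the equivalent normalization $u^*_{\lambda,h}(x_h,x_3)=u^*_h(\lambda x_h,x_3)$, $u^*_{\lambda,3}(x_h,x_3)=\lambda\,u^*_3(\lambda x_h,x_3)$, which again satisfies $\div u^*_\lambda=\lambda(\div u^*)(\lambda x_h,x_3)=0$, keeps $\partial_3 u^*_\lambda\in L^2$, keeps $u^*_{\lambda,3}=0$ at $x_3=0,1$, and converges to $u$ in $\widetilde H^{0,1}$ as $\lambda\to1$. Because you discard this exact mechanism, the proposal's crucial step is only a sketch (``I expect to handle this by one of two routes''), and the fallback you outline would itself require nontrivial work (a slicewise Bogovskii operator whose output, together with its $\partial_3$ derivative, is controlled in $L^2$); as written the proof is not complete.

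A second, smaller omission: for $\lambda>1$ the dilated field $u(\lambda x_h,x_3)$ is not ``defined on a neighbourhood of $\overline Q$'' — the point $\lambda x_h$ may leave $\Omega$ — so one must first extend $u$ by zero to ${\bf R}^2\times(0,1)$, and one must check that this zero-extension is still divergence-free; this is precisely where the hypothesis $\gamma_n u=0$ on the lateral boundary is used (the paper verifies it by a short distributional computation). You mention the normal trace but never make this verification, yet without it the whole dilation-plus-horizontal-mollification scheme does not get off the ground. Your preliminary vertical mollification, by contrast, is harmless but unnecessary: the target space only asks for $\partial_3 u\in L^2$, which $u$ already has, so only horizontal smoothness needs to be created. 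Once the two points above are repaired, your argument collapses onto the paper's proof: zero-extension, exact divergence-preserving anisotropic dilation using star-shapedness, horizontal convolution with $\rho_\eta$ (which commutes with all derivatives and so preserves $\div=0$), and a diagonal extraction in $(\lambda,\eta)$.
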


\begin{proof} Without loss of generality, we will assume that $\Omega$
is star shaped with respect to the origin $0$. \HB
Let $u$ be an element of $\widetilde{H}^{0,1}(Q)$. We denote by 
$u^*(x_h,x_3)$ the extension of
$u(x_h,x_3)$ by zero on ${\bf R}^2 \times (0,1)$, that is,
\begin{equation*}
\begin{split}
u^*(x_h,x_3) = &u(x_h,x_3)~, \quad \forall (x_h,x_3) \in \Omega \times
(0,1) \cr
u^*(x_h,x_3) = & 0~, \quad \forall (x_h,x_3) \in ({\bf R}^2 -
\Omega )\times (0,1) .
\end{split}
\end{equation*}
We notice that the property $\gamma_n u=0$ on $\partial \Omega \times
(0,1)$ implies that, for any $\Phi \in \DD({\bf R}^2 \times (0,1))$
with $\Phi_{|\overline{\Omega}}=\varphi$, we have
\begin{equation*}
\begin{split}
\langle \div u^*, \Phi\rangle_{\DD', \DD} = & - \langle u^*, \nabla
\Phi\rangle_{\DD', \DD}
=  - \int_{\Omega \times (-1,1)} u \cdot \nabla \varphi \, dx_hdx_3 \cr
= & \int_{\Omega \times (-1,1)} \div u \, \varphi \, dx_hdx_3 - \langle
\gamma_n u, \varphi_{| \partial \Omega \times
(0,1)}\rangle_{H^{-1/2},H^{1/2}} \cr
= & \int_{\Omega \times (-1,1)} \div u \, \varphi \,dx_hdx_3 ,
\end{split}
\end{equation*}
which implies that $\div u^*=0$. We also notice that $u^*$ and
$\partial_3 u^*$ belong to $L^2({\bf R}^2 \times (0,1))^3$ and that
thus $\div_h u^*_h=-\partial_3 u^*_3$ is in $L^2({\bf R}^2 \times (0,1))$. \HB
We next want to approximate the vector $u^*$ by a vector with compact
support in $\Omega \times [0,1]$. To this end, inspired by the remark
1.7 of Chapter I of \cite{Temam}, we introduce the
vector $u^*_{\lambda}$, for $\lambda >1$, defined by
\begin{equation}
\label{ulambda}
\begin{split}
u^*_{\lambda,i} (x_h,x_3) = & u^*_i(\lambda x_h , x_3)~, \quad i=1,2, \cr
u^*_{\lambda,3} (x_h,x_3) = & \lambda u^*_3(\lambda x_h , x_3)~.
\end{split}
\end{equation}
We remark that $u^*_{\lambda}$ and $\partial_3 u^*_{\lambda}$ belong to
$L^2({\bf R}^2 \times (0,1))^3$ and that $u^*_{\lambda,3}(x_h,0)=
u^*_{\lambda,3}(x_h,1)=0$. Moreover,
\begin{equation}
\label{divlambda}
\begin{split}
\div_h u^*_{\lambda} (x_h,x_3) = &\lambda (\div_h u^*)(\lambda
x_h,x_3)~, \quad \partial_3 u^*_{\lambda, 3}(x_h, x_3) = \lambda
(\partial_3 u^*_3)(\lambda x_h,x_3)~, \cr
\div u_{\lambda}^*(x_h, x_3)=& \lambda ((\div_h u^*)(\lambda
x_h,x_3) + \lambda (\partial_3 u^*_3)(\lambda x_h,x_3)) =0
\end{split}
\end{equation}
For any $\lambda >1$, the support of $u^*_{\lambda}$ is contained in
$(\frac{1}{\lambda} \bar{\Omega}) \times [0,1]$ and therefore is
  a compact strict subset of $\Omega \times [0,1]$. \HB
Furthermore, using the Lebesgue theorem of dominated convergence, one
easily shows that $u^*_{\lambda}$ converges to $u^*$ and thus to $u$ 
in $\widetilde{H}^{0,1}(Q)$,
when $\lambda$ converges to $1$. \HB

\noindent We next introduce a smooth bump function with compact support $\rho
\in \DD({\bf R}^2)$ such that
$$
\rho (x_h) \geq 0~, \quad \int_{{\bf R}^2} \rho (x_h) dx_h =1 .
$$
For any small positive number $\eta$, we set
$$
\rho_{\eta}(x_h) = \frac{1}{\eta^2}
\rho(\frac{x_h}{\eta}) .
$$
It is well-known that $\rho_{\eta}(x_h)$ converges in the sense
of distributions to the Dirac distribution $\delta_{{\bf R}^2}$. For
any $\lambda >1$ and any $\eta >0$, where $\eta$ is
small with respect to $1-\lambda$, we consider the vector
$u^*_{\lambda,\eta}$, which is the ``horizontal convolution" of
$u^*_{\lambda}$ with $\rho_{\eta}$, that is,
\begin{equation}
\label{convol}
\begin{split}
u^*_{\lambda,\eta}(x_h,x_3) = &(\rho_{\eta} \star_{h} u^*_{\lambda})(x_h,x_3)
\cr
= & \int_{{\bf R}^2} \rho_{\eta}(y_h)u^*_{\lambda}(x_h-y_h,x_3)
dy_h .
\end{split}
\end{equation}
Since, for any $i=1,2,3$,
\begin{equation}
\label{difconvol}
\partial_i u^*_{\lambda,\eta} (x) \equiv \partial_i (\rho_{\eta} \star_{h}
u^*_{\lambda}) (x) = (\rho_{\eta} \star_{h} \partial_i
(u^*_{\lambda}))(x) ,
\end{equation}
it directly follows that, for any $(x_h,x_3) \in {\bf R}^2 \times
(0,1)$,
\begin{equation}
\label{divconvol}
\div u^*_{\lambda,\eta} = \rho_{\eta} \star_{h} (\div u^*_{\lambda})
=0 .
\end{equation}
Using the Young inequality $\|\rho_{\eta} \star_{h}
f(x_h)\|_{L^2({\bf R}^2)} \leq c \|\rho_{\eta} \|_{L^1({\bf
R}^2}\|f(x_h)\|_{L^2({\bf R}^2)} \leq \|f(x_h)\|_{L^2({\bf R}^2)}$,
we can write
\begin{equation}
\label{L2convol}
\begin{split}
\|u^*_{\lambda,\eta}\|_{L^{2}}^{2} = &\int_{0}^{1}( \int_{{\bf R}^2}
| u^*_{\lambda,\eta}(x_h,x_3)|^2 dx_h) dx_3 \leq c^2 \int_{0}^{1}
( \int_{{\bf R}^2} | u^*_{\lambda}(x_h,x_3)| ^2 dx_h) dx_3 \cr
= &c^2 \|u^*_{\lambda}\|_{L^{2}}^{2} .
\end{split}
\end{equation}
The properties \eqref{difconvol} and \eqref{L2convol} also imply that
\begin{equation}
\label{H01convol}
\|\partial_3 u^*_{\lambda,\eta}\|_{L^{2}}  \leq c \|\partial_3 
u^*_{\lambda}\|_{L^{2}} .
\end{equation}
We remark that $u^*_{\lambda,\eta}$ is a $C^{\infty}$-function in the
horizontal variable. Indeed, for any integers $k_1$, $k_2$, we have
$$
\frac{\partial^{k_1 + k_2} u^*_{\lambda,\eta}} {\partial x_1^{k_1} 
\partial x_2^{k_2}}
(x_h,x_3)  =
\int_{{\bf R}^2} \frac{\partial^{k_1 + k_2}\rho_{\eta}} {\partial x_1^{k_1}
\partial x_2^{k_2}}(x_h-y_h)u^*_{\lambda}(y_h,x_3)
dy_h .
$$
If $\eta >0$ is small with respect to $\lambda -1$, the support of
$u^*_{\lambda,\eta}(x_h,x_3) =(\rho_{\eta} \star_{h}
u^*_{\lambda})(x_h,x_3)$
is a compact set strictly contained in $\Omega \times [0,1]$. All
these properties imply in particular that $u^*_{\lambda,\eta}$
belongs to the Sobolev space $H^1(Q)$. We also check that $u^*_{\lambda,\eta}$
vanishes on  $\Gamma_0 \cup \Gamma_1$. Thus, $u^*_{\lambda,\eta}$
belongs to the space $\tilde{H}^{1}_{0}(Q)$, for $\eta >0$ small
enough with respect to $\lambda -1$.

\noindent For any fixed $\lambda >0$, one shows, like in the
classical case of convolutions in all the variables, that
\begin{equation}
\label{convconvol}
\begin{split}
\rho_{\eta} \star_{h} u^*_{\lambda} & \rightarrow_{\eta \rightarrow 0}
u^*_{\lambda} \hbox{ in }L^2(Q) \cr
\partial_3 (\rho_{\eta} \star_{h} u^*_{\lambda}) & \rightarrow_{\eta
\rightarrow 0} \partial_3 u^*_{\lambda} \hbox{ in }L^2(Q) .
\end{split}
\end{equation}
A quick proof of the first property of \eqref{convconvol} is as
follows. Let $w_n \in \DD(\bar{Q})^3$ be a sequence of smooth vectors 
converging to
$u^*_{\lambda}$ in $L^2(Q)^3$. Arguing as in \eqref{L2convol}, by
using the Young inequality, one proves that, for any positive number
$\delta$, there exists an integer $n_{\delta}$ such that, for $n \geq
n_{\delta}$, for any $\eta >0$,
$$
\|w_n - u^*_{\lambda}\|_{L^{2}} + \| \rho_{\eta} \star_{h} w_n
- \rho_{\eta} \star_{h} u^*_{\lambda}\|_{L^{2}} \leq \frac{\delta}{2} .
$$
It thus remains to show for instance that $\|w_{n_\delta} -\rho_{\eta}
\star_{h} w_{n_\delta}\|_{L^{2}}$ converges to $0$, as $\eta$
goes to $0$. Using the $C^1$-regularity of the vector
$w_{n_\delta}$ as well as the properties of $\rho_{\eta}$, one easily
shows that
$$
(\rho_{\eta} \star_{h} w_{n_\delta}) (x_h,x_3) \rightarrow_{\eta
\rightarrow 0}
w_{n_\delta} (x_h,x_3) \hbox{ a.e. }(x_h,x_3) .
$$
Moreover, by \eqref{L2convol},
$$
  \|w_{n_\delta} -\rho_{\eta} \star_{h} w_{n_\delta}\|_{L^{2}} \leq
  (c+1) \|w_{n_\delta}\|_{L^{2}} .
  $$
  These two properties imply, due to the Lebesgue theorem of dominated
  convergence, that $\|w_{n_\delta} -\rho_{\eta}
\star_{h} w_{n_\delta}\|_{L^{2}}$ converges to $0$, as $\eta$
goes to $0$, that is, there exists $\eta_0 >0$ such that, for
any $0 < \eta \leq \eta_0$,
$$
\|w_{n_\delta} -\rho_{\eta} \star_{h} w_{n_\delta}\|_{L^{2}} \leq
\frac{\delta}{2} .
$$
The first property in \eqref{convconvol} is thus
proved. The second property in \eqref{convconvol} is shown in the
same way.

\noindent Let finally $\lambda_n >1$ and $\eta_n>0$ be two sequences
converging to $1$ and $0$ respectively when $n$ goes to infinity. To
complete the proof of the lemma, it suffices to notice that, by a
diagonal procedure, one can extract two subsequences $\lambda_{n_k}$
and $\eta_{n_k}$ such that $u^*_{\lambda_{n_k},\eta_{n_k}}$ converges
to $u^*$ in $\widetilde{H}^{0,1}(Q)$ as $n_k$ goes to infinity. The 
lemma is thus
proved.

\end{proof}

\begin{remark} \label{H02}
We can also define spaces with higher regularity in the vertical
variable. For instance, let
$$
\widetilde{H}^{0,2}(Q)=\{u\in L^2(Q)^3\, | \, \div u=0\, ;\, \gamma_n u=0
\mbox{ on }\partial Q\, ; \, \partial_3^i u\in L^2(Q)^3 \, , \,
i=1,2 \}
$$
and
\begin{equation*}
\begin{split}
\widetilde{H}^{0,2}_0(Q)=\{u\in L^2(Q)^3\, | \, &\div u=0\, ;\, \gamma_n u=0
\mbox{ on }\partial Q\, ; \cr
&\, \partial_3 u_h = 0 \mbox{ on }\Gamma_0 \cup
\Gamma_1\,;
\, \partial_3^i u\in L^2(Q)^3 \, , \,
i=1,2 \} .
\end{split}
\end{equation*}
For later use, we also introduce the space
\begin{equation*}
\begin{split}
{H}^{0,2}(\Omega \times (a,b))=\{u\in L^2(\Omega \times (a,b))^3\, |
\, &\div u=0\, ; \cr
&\, \partial_3^i u\in L^2(\Omega \times (a,b))^3 \, , \,
i=1,2 \},
\end{split}
\end{equation*}
where $-\infty < a < b < +\infty$. \HB
Arguing as in the proof of Lemma \ref{dense}, one shows that, under
the same hypothesis,  ${\tilde H}^1_0(Q) \cap H^{2}(Q)$ is dense in
$\widetilde{H}^{0,2}(Q)$ and that
${\tilde H}^1_0(Q) \cap H^{2}(Q) \cap \widetilde{H}^{0,2}_0(Q)$ is dense in
$\widetilde{H}^{0,2}_0(Q)$
\end{remark}

Let $1 \leq p \leq +\infty$ and $1 \leq q \leq +\infty$.  We denote by
$L^q_vL^p_h(\tilde{Q})= L^q((-1,+1);L^{p}(\Omega))$ or simply
$L^q_vL^p_h$ the space of (classes of) functions $g$ such that
$\|g\|_{L^q_vL^p_h}= \big(\int_{-1}^{+1}(\int_{\Omega} |g(x_h, x_3)|^p
dx_h)^{q/p}dx_3 \big)^{1/q}$ is finite.  We point out that the order
of integration is important.  Of course, $L^q_vL^q_h$ is the
usual space $L^q(\tilde{Q})$ and the norm $\|g\|_{L^q_vL^q_h}$ is
denoted by $\|g\|_{L^q}$. Likewise we define the spaces
$L^q_vL^p_h(Q)= L^q((0,+1);L^{p}(\Omega))$.

\begin{lemma} \label{LestLpLq} The following anisotropic estimates
hold. \HB
1) For any function $g$ in $L^2(\tilde{Q})$ (with $\nabla_h g \in
L^2(\tilde{Q})$) satisfying
  homogeneous Dirichlet boundary conditions on the boundary
$ \partial \Omega \times (-1,+1)$, we have the estimate
\begin{equation}
\label{estLpLq1}
\|g\|_{L^2_v(L^4_h)} \leq C_0 \|g\|_{L^2}^{\frac 12}\|\nabla_h
g\|_{L^2}^{\frac 12},
\end{equation}
2) For any function $g$ in $L^2(\tilde{Q})$, with $\partial_3
g \in L^2(\tilde{Q})$, we have the estimate,
\begin{equation}
\label{estLpLq2}
\|g\|_{L^{\infty}_v(L^2_h)}\leq  C_0 \Big(\|g\|_{L^2}^{\frac
12}\|\partial_3
g\|_{L^2}^{\frac 12} + \|g\|_{L^2} \Big),
\end{equation}
where $C_0 >1$ is a constant independent of $g$.
\end{lemma}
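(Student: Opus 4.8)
The plan is to prove both estimates by slicing: estimate \eqref{estLpLq1} is obtained by applying a two-dimensional Gagliardo--Nirenberg (Ladyzhenskaya) inequality on each horizontal slice $\{x_3=\text{const}\}$ and then integrating in $x_3$, while estimate \eqref{estLpLq2} is obtained by proving a one-dimensional $L^\infty$-bound in the vertical variable for the function $x_3\mapsto\|g(\cdot,x_3)\|_{L^2_h}^2$. In both cases the passage from the slice inequality to the full estimate uses only the Cauchy--Schwarz inequality and Fubini's theorem, and one takes $C_0>1$ larger than all the constants that appear.

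For \eqref{estLpLq1}, I would fix $x_3\in(-1,1)$. Since $g$ vanishes on $\partial\Omega\times(-1,1)$, for almost every $x_3$ the slice $x_h\mapsto g(x_h,x_3)$ belongs to $H^1_0(\Omega)$; extending it by zero to $\R^2$ (which preserves the $L^2$, $L^4$ and $\|\nabla_h\cdot\|_{L^2}$ norms) and invoking the classical inequality $\|f\|_{L^4(\R^2)}^2\le C\|f\|_{L^2(\R^2)}\|\nabla f\|_{L^2(\R^2)}$ with its universal constant $C$, we get
$$\|g(\cdot,x_3)\|_{L^4_h}^2\le C\,\|g(\cdot,x_3)\|_{L^2_h}\,\|\nabla_h g(\cdot,x_3)\|_{L^2_h}\quad\text{for a.e. }x_3.$$
Integrating over $x_3$ and applying the Cauchy--Schwarz inequality in $x_3$ yields $\|g\|_{L^2_v(L^4_h)}^2=\int_{-1}^1\|g(\cdot,x_3)\|_{L^4_h}^2\,dx_3\le C\,\|g\|_{L^2}\|\nabla_h g\|_{L^2}$, which is \eqref{estLpLq1}.

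For \eqref{estLpLq2}, I would set $\varphi(x_3)=\|g(\cdot,x_3)\|_{L^2_h}^2=\int_\Omega|g(x_h,x_3)|^2\,dx_h$. Because $g$ and $\partial_3 g$ lie in $L^2(\tilde{Q})$, the map $x_3\mapsto g(\cdot,x_3)$ belongs to $H^1((-1,1);L^2_h)\hookrightarrow C([-1,1];L^2_h)$, so $\varphi$ is absolutely continuous with $\varphi'(x_3)=2\int_\Omega g\,\partial_3 g\,dx_h$ and $|\varphi'(x_3)|\le 2\|g(\cdot,x_3)\|_{L^2_h}\|\partial_3 g(\cdot,x_3)\|_{L^2_h}$. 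Writing $\varphi(x_3)=\varphi(y)+\int_y^{x_3}\varphi'(s)\,ds$ and averaging over $y\in(-1,1)$ (an interval of length $2$) gives
$$2\varphi(x_3)\le\int_{-1}^1\varphi(y)\,dy+2\int_{-1}^1|\varphi'(s)|\,ds\le\|g\|_{L^2}^2+4\int_{-1}^1\|g(\cdot,s)\|_{L^2_h}\|\partial_3 g(\cdot,s)\|_{L^2_h}\,ds,$$
and bounding the last integral by $4\|g\|_{L^2}\|\partial_3 g\|_{L^2}$ via Cauchy--Schwarz we obtain, after taking the supremum over $x_3$, $\|g\|_{L^\infty_v(L^2_h)}^2\le\tfrac12\|g\|_{L^2}^2+2\|g\|_{L^2}\|\partial_3 g\|_{L^2}$; estimate \eqref{estLpLq2} then follows from $\sqrt{a+b}\le\sqrt a+\sqrt b$. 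Note that the extra lower-order term $\|g\|_{L^2}$ is unavoidable here and comes precisely from the boundary contribution $\varphi(y)$ in the averaging, which has no analogue on $\R$.

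The arguments are elementary, and the only delicate points are measure-theoretic: that the vanishing of $g$ on $\partial\Omega\times(-1,1)$ implies $g(\cdot,x_3)\in H^1_0(\Omega)$ for a.e. $x_3$ (needed for the zero extension in the first part), and that $\varphi$ is genuinely absolutely continuous in the second part. Both are standard; if one prefers to avoid them altogether, one can first establish \eqref{estLpLq1}--\eqref{estLpLq2} for $g\in\DD(\overline{\tilde{Q}})$ and then pass to the limit by density, and I would most likely present the proof in that form.
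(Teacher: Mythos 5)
Your proposal is correct and follows essentially the same route as the paper: part 1) is the two-dimensional Ladyzhenskaya/Gagliardo--Nirenberg inequality applied on each horizontal slice (the paper invokes it with Poincar\'e on $\Omega$, you via extension by zero to $\R^2$) followed by Cauchy--Schwarz in $x_3$. For part 2) the paper pulls the supremum inside the horizontal integral and applies the one-dimensional Agmon inequality in $x_3$ pointwise in $x_h$ before Cauchy--Schwarz, whereas you derive the same one-dimensional bound directly for $\varphi(x_3)=\|g(\cdot,x_3)\|_{L^2_h}^2$ by the fundamental theorem of calculus and averaging; this is only a cosmetic rearrangement, and the lower-order term $\|g\|_{L^2}$ arises for the same reason (the finite length of the vertical interval) in both arguments.
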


\begin{proof} We first prove Inequality \eqref{estLpLq1}. Since $g$
vanishes on the lateral boundary, using the Gagliardo-Nirenberg
and the Poincar\'{e} inequalities in the horizontal variable and also 
the Cauchy-Schwartz
inequality in the vertical variable, we obtain,
\begin{equation*}
\begin{split}
\|g\|_{L^2_v(L^4_h)}^2& \leq  C \int_{-1}^{+1} \big[\big( \int_{\Omega}
| g(x_h,x_3)| ^2 dx_h \big)^{1/2} \big( \int_{\Omega}
| \nabla_h g(x_h,x_3)| ^2 dx_h \big)^{1/2} \big] dx_3 \cr
\leq  C_0^2  &\big(\int_{-1}^{+1} \int_{\Omega}
| g(x_h,x_3)| ^2 dx_h dx_3 \big)^{1/2} \big(\int_{-1}^{+1}  \int_{\Omega}
| \nabla_h g(x_h,x_3)| ^2 dx_h dx_3 \big)^{1/2} .
\end{split}
\end{equation*}
To prove Inequality \eqref{estLpLq2}, we first apply the Agmon inequality
in the vertical variable and then the Cauchy-Schwartz inequality in
the horizontal variable. We get,
\begin{equation*}
\begin{split}
\sup_{x_3 \in (-1,+1)} (\int_{\Omega}& | g(x_h,x_3)| ^2 dx_h)^{1/2}
\leq   (\int_{\Omega} \sup_{x_3 \in (-1,+1)}|g(x_h,x_3)| ^2 dx_h)^{1/2} \cr
\leq  & C \Big( \int_{\Omega} \big[ (\int_{-1}^{+1} | g(x_h,
x_3)| ^2dx_3)^{1/2}
(\int_{-1}^{+1} | \partial_3g(x_h, x_3)|^2dx_3 )^{1/2} \cr
&\hphantom{C \Big( \int_{\Omega} \big[ (\int_{-1}^{+1} g(x_h,
x_3)^2dx_3)^{1/2}}
+\int_{-1}^{+1} | g(x_h, x_3)| ^2dx_3 \big] dx_h \Big)^{1/2} \cr
\leq &C_0 \Big( \|g\|_{L^2}^{\frac 12}\|\partial_3
g\|_{L^2}^{\frac 12} + \|g\|_{L^2} \Big).
\end{split}
\end{equation*}
\end{proof}

The previous lemma allows to estimate the term $(u \nabla
u,u)_{H^{0,1}}$, which will often appear in the estimates given
below. More precisely, we can prove the following lemma.

\begin{lemma} \label{Lunablav}
There exists a positive constant $C_1$ such that, for
any smooth enough divergence-free vector field $u$ and any smooth
enough vector field $v$, the following estimate holds,
\begin{equation}
\label{unablav}
\begin{split}
|(u\nabla v,v)_{H^{0,1}}| \leq C_1 \big(\|u\|_{H^{0,1}}^{
1/2}\|\nabla_h u\|_{H^{0,1}}^{1/2} & \|v\|_{H^{0,1}}^{
1/2}\|\nabla_h v\|^{3/2}_{H^{0,1}} \cr
&+\|\nabla_h
u\|_{H^{0,1}}\|v\|_{H^{0,1}}\|\nabla_h v\|_{H^{0,1}}\big).
\end{split}
\end{equation}
\end{lemma}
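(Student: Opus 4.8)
The plan is to expand the $H^{0,1}$ scalar product as
\[
(u\nabla v,v)_{H^{0,1}}=(u\nabla v,v)_{L^2}+(\partial_3(u\nabla v),\partial_3 v)_{L^2},
\]
to discard the pieces that vanish after an integration by parts, and to control what is left with the anisotropic inequalities \eqref{estLpLq1}--\eqref{estLpLq2} of Lemma~\ref{LestLpLq}. It suffices to argue for vector fields that are periodic in $x_3$ and vanish on the lateral boundary $\partial\Omega\times(-1,1)$ (so that $\gamma_n u=0$ on $\partial\tilde Q$), which is the only situation in which the lemma will be used; the case of $Q$ with $u_3=0$ on $\Gamma_0\cup\Gamma_1$ is handled the same way. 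Since $\div u=0$ and $\gamma_n u=0$, one integration by parts gives $(u\nabla v,v)_{L^2}=\frac12\int_{\tilde Q}u\cdot\nabla|v|^2\,dx=0$. By the Leibniz rule $\partial_3(u\cdot\nabla v_i)=(\partial_3 u)\cdot\nabla v_i+u\cdot\nabla(\partial_3 v_i)$, and the contribution of the last term, summed over $i$, equals $\frac12\int_{\tilde Q}u\cdot\nabla|\partial_3 v|^2\,dx=0$ for the same reason. Thus everything reduces to bounding
\[
\mathcal I:=\sum_{i=1}^{3}\int_{\tilde Q}\big((\partial_3 u)\cdot\nabla v_i\big)\,\partial_3 v_i\,dx=I_1+I_2,
\]
where $I_1=\sum_i\int_{\tilde Q}(\partial_3 u_h)\cdot\nabla_h v_i\,\partial_3 v_i\,dx$ and $I_2=\int_{\tilde Q}(\partial_3 u_3)\,|\partial_3 v|^2\,dx$.

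For $I_1$ I would use H\"older in the horizontal variable with exponents $(4,2,4)$ and in the vertical variable with exponents $(2,\infty,2)$, so that
\[
|I_1|\le\sum_i\|\partial_3 u_h\|_{L^2_v(L^4_h)}\,\|\nabla_h v_i\|_{L^{\infty}_v(L^2_h)}\,\|\partial_3 v_i\|_{L^2_v(L^4_h)}.
\]
Inequality \eqref{estLpLq1} applies to $\partial_3 u_h$ and to $\partial_3 v_i$ (these are tangential derivatives of $u$, resp.\ $v$, hence vanish on the lateral boundary), \eqref{estLpLq2} applies to $\nabla_h v_i$, and using $\|\partial_3 w\|_{L^2}\le\|w\|_{H^{0,1}}$ together with $\|\nabla_h\partial_3 w\|_{L^2}=\|\partial_3\nabla_h w\|_{L^2}\le\|\nabla_h w\|_{H^{0,1}}$ one arrives at a bound by $C\,\|u\|_{H^{0,1}}^{1/2}\|\nabla_h u\|_{H^{0,1}}^{1/2}\|v\|_{H^{0,1}}^{1/2}\|\nabla_h v\|_{H^{0,1}}^{3/2}$, the first term on the right of \eqref{unablav}.

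The decisive term is $I_2$, where $\partial_3$ falls on the vertical component $u_3$, which carries no vertical smoothing; a naive estimate would need $\partial_3^2 u_3\in L^2$, which is not available. The remedy, stressed already in the introduction, is the divergence-free identity $\partial_3 u_3=-\mathrm{div}_h u_h=-(\partial_1 u_1+\partial_2 u_2)$, which turns the forbidden vertical derivative into a horizontal one: $I_2=-\int_{\tilde Q}(\mathrm{div}_h u_h)\,|\partial_3 v|^2\,dx$. H\"older in the horizontal variable with exponents $(2,4,4)$ and in the vertical variable with exponents $(\infty,2,2)$ then gives $|I_2|\le\|\mathrm{div}_h u_h\|_{L^{\infty}_v(L^2_h)}\sum_i\|\partial_3 v_i\|_{L^2_v(L^4_h)}^2$; applying \eqref{estLpLq2} to $\mathrm{div}_h u_h$ with $\|\mathrm{div}_h u_h\|_{L^2}\le\sqrt2\,\|\nabla_h u\|_{L^2}$ and $\|\partial_3\mathrm{div}_h u_h\|_{L^2}=\|\mathrm{div}_h\partial_3 u_h\|_{L^2}\le\sqrt2\,\|\nabla_h u\|_{H^{0,1}}$, and \eqref{estLpLq1} to each $\partial_3 v_i$, yields a bound by $C\,\|\nabla_h u\|_{H^{0,1}}\|v\|_{H^{0,1}}\|\nabla_h v\|_{H^{0,1}}$, the second term on the right of \eqref{unablav}. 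Taking $C_1$ to be a suitable multiple of $C_0^3$ completes the proof.

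I expect the real work to be the bookkeeping rather than any individual inequality: after the Leibniz rule and the splitting $\nabla=(\nabla_h,\partial_3)$ one must check that every surviving factor is either $\partial_3$ applied to a single scalar component, or carries one horizontal and at most one vertical derivative, so that no second vertical derivative ever occurs; and the exponents in the anisotropic H\"older inequalities must be chosen so that the powers of $\|u\|_{H^{0,1}}$, $\|\nabla_h u\|_{H^{0,1}}$, $\|v\|_{H^{0,1}}$, $\|\nabla_h v\|_{H^{0,1}}$ add up to $(\tfrac12,\tfrac12,\tfrac12,\tfrac32)$ in $I_1$ and to $(0,1,1,1)$ in $I_2$. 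The divergence-free rewriting of $\partial_3 u_3$ in $I_2$ is the single genuinely non-routine step.
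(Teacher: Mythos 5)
Your proposal is correct and follows essentially the same route as the paper: drop the zero-order and transported pieces by integration by parts and the divergence-free condition, split the remaining term into $(\partial_3 u_h\cdot\nabla_h v,\partial_3 v)$ and $(\partial_3 u_3\,\partial_3 v,\partial_3 v)$, rewrite $\partial_3 u_3=-\div_h u_h$, and close each term with the anisotropic H\"older estimates \eqref{estLpLq1}--\eqref{estLpLq2} exactly as in \eqref{uvv1}--\eqref{uvv3}. The only difference is presentational: you make explicit the vanishing of the $L^2$ part and of the $u\cdot\nabla\partial_3 v$ contribution, and the boundary conditions under which the integrations by parts and Lemma \ref{LestLpLq} apply, which the paper leaves implicit in the phrase ``smooth enough''.
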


\begin{proof}
The proof of this lemma is very simple.  Integrating by parts and
using the divergence-free condition on $u$, we can write
\begin{equation}
\label{uvv1}
\begin{split}
(u\nabla v, v)_{H^{0,1}}&=(\partial_3(u\nabla v),\partial_3
v) = (\partial_3 u\nabla v,\partial_3 v) \cr
&= (\partial_3u_h\nabla_h
v,\partial_3 v) + (\partial_3 u_3\partial_3 v,\partial_3 v)\cr
&= (\partial_3u_h\nabla_h
v,\partial_3 v) -(\div_h u_h\partial_3 v,\partial_3 v).
\end{split}
\end{equation}
Applying Lemma \ref{LestLpLq}, we obtain the estimate,
\begin{equation}
\label{uvv2}
\begin{split}
|(\div_h u_h \partial_3v,\partial_3v)_{L^2}|& \leq
C\|\nabla_h u\|_{L^\infty_v(L^2_h)}\|\partial_3
v\|_{L^2_v(L^4_h)}^2\cr
\leq & C \Big(\|\nabla_h u\|_{L^2}^{1/2} \|\nabla_h \partial_3 u\|_{L^2}^{1/2}
  +\|\nabla_h u\|_{L^2} \Big) \|\partial_3 v\|_{L^2}\|\nabla_h
\partial_3 v\|_{L^2} \cr
  \leq & C\|\nabla_h u\|_{H^{0,1}}\|\partial_3v\|_{L^2}\|\nabla_h 
\partial_3 v\|_{L^2}
\end{split}
\end{equation}
Furthermore, using Lemma \ref{LestLpLq} once more, we get the estimate
\begin{equation}
\label{uvv3}
\begin{split}
|(\partial_3 u_h \nabla_h v,\partial_3v)|
\leq &C \|\partial_3u\|_{L^2_v(L^4_h)} \|\nabla_h
v\|_{L^\infty_v(L^2_h)}\|\partial_3 v\|_{L^2_v(L^4_h)} \cr
\leq &C \|\partial_3 u\|_{L^2}^{1/2}\|\nabla_h \partial_3 u\|_{L^2}^{1/2}
\|\partial_3 v\|_{L^2}^{1/2}\|\nabla_h \partial_3 v\|_{L^2}^{1/2} \cr
&\hphantom{C\|\partial_3 u\|}
\times\Big(\|\nabla_h v\|_{L^2}^{1/2}\|\nabla_h\partial_3 v\|_{L^2}^{1/2} +
\|\nabla_h v\|_{L^2} \Big) \cr
\leq & C \|\partial_3 u\|_{L^2}^{1/2}\|\nabla_h \partial_3 u\|_{L^2}^{1/2}
\|\partial_3 v\|_{L^2}^{1/2}\|\nabla_h \partial_3 v\|_{L^2}^{1/2}
\|\nabla_h v\|_{H^{0,1}}.
\end{split}
\end{equation}
The estimates \eqref{uvv1}, \eqref{uvv2} and \eqref{uvv3} imply the
lemma.
\end{proof}

\vskip 1mm
The next proposition shows that sequences of uniformly bounded (with respect to
$\varepsilon_m$) classical solutions of the equations
$(NS_{\varepsilon_m})$ converge to solutions
of the system $(NS_h)$ when $\varepsilon_m$ goes to zero. The same
type of proof implies the uniqueness of the solutions of System
$(NS_h)$. In order to state the result, we introduce the space
\begin{equation*}
H^{1,0}(\Omega \times (a,b))= \{u\in L^2(\Omega \times (a,b))^3\, |
\, \div u=0\, ;
\, \nabla_h u \in L^2(\Omega \times (a,b))^3 \},
\end{equation*}
where $-\infty < a < b < +\infty$.

\begin{proposition}\label{Cauchy}
1) Let $u_0 \in \widetilde{H}^{0,1}(Q)$ be given.  Let $\varepsilon_m >0 $ be a
(decreasing) sequence converging to $zero$ and
$u_0^m \in {\tilde H}^1_0(Q)$ be a sequence of initial data converging
to $u_0$ in $\widetilde{H}^{0,1}(Q)$, when $m$ goes to infinity. 
Assume that the
system $(NS_{\varepsilon_m})$, with initial data $\Sigma u_0^m$, 
admits a strong solution
$u_{\varepsilon_m}(t) \in C^0((0,T_0),\tilde{V})$ where $T_0$ does
not depend on $\varepsilon_m$ and that the sequences $u_{\varepsilon_m}(t)$
and $\nabla_h u_{\varepsilon_m}(t)$ are uniformly bounded in
$L^{\infty}((0, T_0),H^{0,1}(\tilde{Q}))$ and in
$L^{2}((0, T_0),H^{0,1}(\widetilde{Q}))$ respectively. Then, the sequence
$u_{\varepsilon_m}(t)$ converges
in $L^{\infty}((0,T_0), L^2(\tilde{Q})^3) \cap L^2((0, T_0),
H^{1,0}(\tilde{Q}))$ to a solution $u^*\in L^{\infty}((0,T_0),$ $
H^{0,1}(\tilde{Q}))$ of the problem $(NS_h)$, such that $\nabla_h u^*$ belongs
to $L^2((0, T_0), H^{0,1}(\tilde{Q}))$. In particular, the vector
field $u^*$ belongs to $L^{\infty}((0,T_0), \widetilde{H}^{0,1}(Q))$.
\HB
2) The problem $(NS_h)$ has at most one solution $u^*$ in $L^{\infty}((0,T_0),
H^{0,1}(\tilde{Q}))$ with $\nabla_hu^*$ in $L^2((0, T_0),
H^{0,1}(\tilde{Q}))$. \HB
\end{proposition}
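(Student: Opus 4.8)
The plan is to derive uniform-in-$m$ a priori estimates from the equations $(NS_{\varepsilon_m})$, then pass to the limit using compactness, and finally treat uniqueness (part 2) as a special case of the convergence argument with two exact solutions in place of two approximating sequences. First I would recall that by hypothesis the sequence $u_{\varepsilon_m}$ is bounded in $L^\infty((0,T_0),H^{0,1}(\tilde Q))$ and $\nabla_h u_{\varepsilon_m}$ is bounded in $L^2((0,T_0),H^{0,1}(\tilde Q))$; combining these with the anisotropic estimates of Lemma~\ref{LestLpLq} gives that the sequence is bounded in all the mixed-norm spaces appearing in the nonlinear term. From the equation I would read off a uniform bound on $\partial_t u_{\varepsilon_m}$ in some negative-order space (e.g. in $L^{4/3}((0,T_0),(\tilde V)')$ or similar), which by the Aubin--Lions lemma yields strong compactness of $u_{\varepsilon_m}$ in $L^2((0,T_0),L^2(\tilde Q))$, hence a limit $u^*$ along a subsequence, with $u^*\in L^\infty((0,T_0),H^{0,1}(\tilde Q))$ and $\nabla_h u^*\in L^2((0,T_0),H^{0,1}(\tilde Q))$ by weak-* and weak lower semicontinuity.

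The delicate point is to show that the whole sequence converges (not just a subsequence) and that it converges in the stronger norm $L^\infty((0,T_0),L^2)\cap L^2((0,T_0),H^{1,0})$, which forces a Cauchy-sequence argument rather than a bare compactness argument. Here I would write the equation satisfied by the difference $w_{m,k}=u_{\varepsilon_m}-u_{\varepsilon_k}$; it contains the difference of the viscous terms $\varepsilon_m\partial_3^2 u_{\varepsilon_m}-\varepsilon_k\partial_3^2 u_{\varepsilon_k}$ (which is controlled, after integration by parts and using the uniform $H^{0,1}$-type bounds, by a quantity tending to $0$ as $m,k\to\infty$ since $\varepsilon_m,\varepsilon_k\to 0$), the difference of initial data $\Sigma u_0^m-\Sigma u_0^k$ (which tends to $0$ in $\widetilde H^{0,1}$ hence in $L^2$), and the bilinear difference $u_{\varepsilon_m}\nabla u_{\varepsilon_m}-u_{\varepsilon_k}\nabla u_{\varepsilon_k}=w_{m,k}\nabla u_{\varepsilon_m}+u_{\varepsilon_k}\nabla w_{m,k}$. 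Testing against $w_{m,k}$ and using $\div u_{\varepsilon_k}=0$ kills the term $(u_{\varepsilon_k}\nabla w_{m,k},w_{m,k})$, so the only genuinely nonlinear contribution is $(w_{m,k}\nabla u_{\varepsilon_m},w_{m,k})$; this is estimated by the anisotropic Lemma~\ref{LestLpLq} in the same spirit as the proof of Lemma~\ref{Lunablav}, writing $w_{m,k}\nabla u_{\varepsilon_m}$ in terms of $\nabla_h$ and $\partial_3$ components and exploiting that $\partial_3$ only hits $u_{\varepsilon_m,3}$, which via $\div u_{\varepsilon_m}=0$ equals $-\div_h u_{\varepsilon_m,h}$ and is thus absorbed by the horizontal derivatives. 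This produces a differential inequality of the form $\frac{d}{dt}\|w_{m,k}\|_{L^2}^2+\nu_h\|\nabla_h w_{m,k}\|_{L^2}^2\le \Phi_m(t)\|w_{m,k}\|_{L^2}^2+\rho_{m,k}(t)$, where $\Phi_m\in L^1(0,T_0)$ uniformly in $m$ and $\rho_{m,k}\to 0$; Grönwall then gives the Cauchy property in $L^\infty((0,T_0),L^2)$ and, after integrating, in $L^2((0,T_0),H^{1,0})$.

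With the strong convergence in hand, passing to the limit in the weak formulation of $(NS_{\varepsilon_m})$ is routine: the linear terms pass by weak convergence, the artificial viscosity term $\varepsilon_m\partial_3^2 u_{\varepsilon_m}$ vanishes in the limit because $\varepsilon_m\to 0$ and $\partial_3 u_{\varepsilon_m}$ is bounded in $L^\infty L^2$, and the nonlinear term passes by the strong $L^2((0,T_0),L^2)$ convergence combined with the weak $L^2((0,T_0),H^{1,0})$ convergence (the product of a strongly and a weakly convergent sequence converges in the sense of distributions). The symmetry structure built into $(NS_{\varepsilon_m})$ via the map $S$ guarantees $u_{\varepsilon_m,3}=0$ on $\Gamma_0\cup\Gamma_1$, which is stable under the limit, so $u^*$ restricted to $Q$ satisfies the boundary condition $u_3^*|_{\Gamma_0\cup\Gamma_1}=0$, the lateral Dirichlet condition, and $\div u^*=0$; together with the regularity just established this shows $u^*\in L^\infty((0,T_0),\widetilde H^{0,1}(Q))$ and solves $(NS_h)$. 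For part~2, I would repeat verbatim the difference estimate above with two solutions $u^{*,1},u^{*,2}\in L^\infty((0,T_0),H^{0,1})$, $\nabla_h u^{*,i}\in L^2((0,T_0),H^{0,1})$ in place of $u_{\varepsilon_m},u_{\varepsilon_k}$, now with no artificial-viscosity remainder and with identical initial data, so $\rho\equiv 0$; Grönwall then forces $u^{*,1}=u^{*,2}$. The main obstacle, as indicated, is organizing the anisotropic estimate of the nonlinear difference term so that the $\partial_3$-derivative always lands on a third component (and is thus traded for $\div_h$) and so that all resulting factors are either absorbed into the $\nu_h\|\nabla_h w\|_{L^2}^2$ dissipation or multiplied by an $L^1$-in-time coefficient coming from the uniform bounds — this is exactly where Lemmas~\ref{LestLpLq} and~\ref{Lunablav} do the work.
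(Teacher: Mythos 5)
Your argument is correct and is essentially the paper's own proof: the paper likewise establishes the Cauchy property of $u_{\varepsilon_m}$ by writing the equation for $w_{m,k}=u_{\varepsilon_m}-u_{\varepsilon_k}$, testing with $w_{m,k}$ so that $(u_{\varepsilon_k}\nabla w_{m,k},w_{m,k})$ vanishes by incompressibility, estimating the remaining terms (including the $(\varepsilon_k-\varepsilon_m)$-viscosity remainder and the initial-data difference) with Lemma~\ref{LestLpLq} and the substitution $\partial_3 w_{m,k,3}=-\mathrm{div}_h\, w_{m,k,h}$, applying Gronwall, and then passing to the limit in the weak formulation, recovering the boundary conditions from the symmetry $\Sigma u_{\varepsilon_m}=u_{\varepsilon_m}$ and the lateral Dirichlet condition from the $L^2((0,T_0),H^{1,0})$ convergence, with uniqueness obtained by the same difference estimate. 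The preliminary Aubin--Lions compactness step you sketch is superfluous (the Cauchy argument already gives convergence of the full sequence in the stronger norms), and the paper omits it.
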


\begin{proof}
We recall that $u_{\varepsilon_m}(t) \in C^0((0,T_0),\tilde{V})$ is a
classical solution of the equations
   \begin{equation}
  \label{epsm}
\begin{split}
& \partial_t u_{\varepsilon_m}+u_{\varepsilon_m}\nabla
u_{\varepsilon_m}-\nu_h \Delta_h u_{\varepsilon_m}
-\varepsilon_m\partial_{x_3}^2
u_{\varepsilon_m}=-\nabla p_{\varepsilon_m} \cr
& \div u_{\varepsilon_m}=0    \\
& u_{\varepsilon_m}|_{t=0} = \Sigma u_0^m .
\end{split}
\end{equation}
We first want to show that, under the hypotheses of the proposition, 
$u_{\varepsilon_m}(t)$
and $\nabla_h u_{\varepsilon_m}(t)$ are Cauchy sequences
in the spaces $L^{\infty}((0,T_0), L^2(\tilde{Q})^3)$ and in $L^2((0, T_0),
L^2(\tilde{Q})^3)$ respectively. \HB
In order to simplify the notation in the estimates below, we will
simply denote the vector $u_{\varepsilon_m}$ by $u_m$.
Let $m > k$. Since the sequence $\varepsilon_n$ is decreasing,
$\varepsilon_m <\varepsilon_k$. The vector $w_{m,k}=u_m-
u_k$ satisfies the equation
  \begin{equation*}
  \begin{split}
   \partial_t w_{m,k} -\nu_h \Delta_h w_{m,k} -\varepsilon_k
   \partial_3^2 w_{m,k} =  &(\varepsilon_m -\varepsilon_k) \partial_3^2
   u_m - w_{m,k} \nabla u_m  \cr
   & - u_k \nabla w_{m,k}
  -\nabla (p_{\varepsilon_m} -p_{\varepsilon_k}).
\end{split}
\end{equation*}
Taking the inner product in $L^2(\tilde{Q})^3$ of the previous
equality with $w_{m,k}$, we obtain the equality
  \begin{equation}
  \label{Cauchy1}
  \begin{split}
\frac{1}{2} \partial_t \|w_{m,k}\|_{L^{2}}^{2} + \nu_h \|\nabla_h
w_{m,k}\|_{L^{2}}^{2} + \varepsilon_k \|\partial_3 w_{m,k}\|_{L^{2}}^{2}
= & (\varepsilon_k -\varepsilon_m) (\partial_3 u_m,\partial_3
w_{m,k})_{L^2}\cr
& + B_1 +B_2,
\end{split}
\end{equation}
where
\begin{equation*}
  \begin{split}
B_1 & =- (w_{m,k,h} \nabla_h u_m, w_{m,k})_{L^2} \cr
B_2 & = -(w_{m,k,3} \partial_3 u_m, w_{m,k})_{L^2}.
\end{split}
\end{equation*}
Applying the H\"{o}lder and Young
inequalities and Lemma \ref{LestLpLq}, we estimate $B_1$ as follows,
  \begin{equation}
  \label{Cauchy2}
  \begin{split}
|B_1|  \leq & \|w_{m,k,h}\|_{L^2_v(L^4_h)}\|w_{m,k}\|_{L^2_v(L^4_h)}
\|\nabla_h u_m\|_{L^{\infty}_v(L^2_h)} \cr
\leq & \|w_{m,k}\|_{L^{2}} \|\nabla_h w_{m,k}\|_{L^{2}}\big(
\|\nabla_h u_m\|_{L^{2}} + \|\nabla_h u_m\|_{L^{2}}^{1/2}\|\nabla_h
\partial_3u_m\|_{L^{2}}^{1/2}\big) \cr
\leq & \frac{\nu_h}{4}\|\nabla_h w_{m,k}\|_{L^{2}}^2 +
\frac{4}{\nu_h} \|w_{m,k}\|_{L^{2}}^2\big(\|\nabla_h u_m\|_{L^{2}}^2 +
  \|\nabla_h \partial_3u_m\|_{L^{2}}^2 \big).
\end{split}
\end{equation}
Using the same arguments as above and also the fact that $\partial_3
w_{m,k,3}=- \div_h w_{m,k,h}$,  we can bound $B_2$ as follows,
  \begin{equation}
  \label{Cauchy3}
  \begin{split}
|B_2|  \leq & \|\partial_3 u_m\|_{L^2_v(L^4_h)} \|w_{m,k}\|_{L^2_v(L^4_h)}
\|w_{m,k,3}\|_{L^{\infty}_v(L^2_h)} \cr
\leq & \| \partial_3u_m\|_{L^{2}}^{1/2} \|\nabla_h
\partial_3u_m\|_{L^{2}}^{1/2}  \|w_{m,k}\|_{L^{2}}^{1/2} \|\nabla_h
w_{m,k}\|_{L^{2}}^{1/2}\cr
& \times \big( \|w_{m,k,3}\|_{L^{2}} + \|w_{m,k,3}\|_{L^{2}}^{1/2}
\|\partial_3w_{m,k,3}\|_{L^{2}}^{1/2}\big) \cr
\leq & \| \partial_3u_m\|_{L^{2}}^{1/2} \|\nabla_h
\partial_3u_m\|_{L^{2}}^{1/2}  \|w_{m,k}\|_{L^{2}}^{3/2}
\|\nabla_h w_{m,k}\|_{L^{2}}^{1/2} \cr
& + \| \partial_3u_m\|_{L^{2}}^{1/2} \|\nabla_h
\partial_3u_m\|_{L^{2}}^{1/2}  \|w_{m,k}\|_{L^{2}} \|\nabla_h
w_{m,k}\|_{L^{2}} \cr
\leq & \frac{\nu_h}{4}\|\nabla_h w_{m,k}\|_{L^{2}}^2 +
\frac{3}{2 \nu_h^{1/3}} \| \partial_3u_m\|_{L^{2}}^{2/3} \|\nabla_h
\partial_3u_m\|_{L^{2}}^{2/3}  \|w_{m,k}\|_{L^{2}}^{2} \cr
& + \frac{2}{\nu_h}  \| \partial_3u_m\|_{L^{2}}\|\nabla_h
\partial_3u_m\|_{L^{2}}  \|w_{m,k}\|_{L^{2}}^2.
\end{split}
\end{equation}
The estimates \eqref{Cauchy1}, \eqref{Cauchy2}, and \eqref{Cauchy3}
together with the Cauchy-Schwarz inequality imply that, for $t \in
[0,T_0]$,
\begin{equation}
  \label{Cauchy4}
  \begin{split}
\partial_t \|w_{m,k}\|_{L^{2}}^{2}& + \nu_h \|\nabla_h
w_{m,k}\|_{L^{2}}^{2} + (\varepsilon_k +\varepsilon_m) \|\partial_3
w_{m,k}\|_{L^{2}}^{2}\cr
\leq &(\varepsilon_k-\varepsilon_m) \|\partial_3 u_m\|_{L^{2}}^{2}
  + \frac{4}{\nu_h} \|w_{m,k}\|_{L^{2}}^2\big(\|\nabla_h u_m\|_{L^{2}}^2 +
  \|\nabla_h \partial_3u_m\|_{L^{2}}^2 \big)\cr
& + \frac{3}{2 \nu_h^{1/3}} \| \partial_3u_m\|_{L^{2}}^{2/3} \|\nabla_h
\partial_3u_m\|_{L^{2}}^{2/3}  \|w_{m,k}\|_{L^{2}}^{2} \cr
& + \frac{2}{\nu_h}  \| \partial_3u_m\|_{L^{2}}\|\nabla_h
\partial_3u_m\|_{L^{2}}  \|w_{m,k}\|_{L^{2}}^2.
\end{split}
\end{equation}
Integrating the inequality \eqref{Cauchy4} from $0$ to $t$,
and applying the Gronwall lemma, we obtain, for
$0< t \leq T_0$,
\begin{equation}
  \label{Cauchy5}
  \begin{split}
\|w_{m,k}&(t)\|_{L^2}^{2}  + \nu_h \int_{0}^{t}\|\nabla_h
w_{m,k}(s)\|_{L^2}^{2}ds + (\varepsilon_k +\varepsilon_m) 
\int_{0}^{t}\|\partial_3
w_{m,k}(s)\|_{L^2}^{2}ds \cr
  \leq \Big[&(\varepsilon_k-\varepsilon_m) \int_{0}^{T_0} \| \partial_3
u_m(s)\|_{L^2}^{2} ds + \|u_0^m - u_0^k\|_{L^2}^{2} \Big] \cr
&\times \exp  \big(\frac{c_0}{\nu_h} \int_{0}^{T_0} \big( \|\nabla_h 
u_m(s)\|_{L^{2}}^{2} +
\|\nabla_h \partial_3 u_m(s)\|_{L^{2}}^{2} + \|\partial_3
u_m(s)\|_{L^{2}}^{2} \big) ds \big) \cr
& \times \exp  \big(\frac{c_1}{\nu_h^{1/3}} (\int_{0}^{T_0}
\|\nabla_h \partial_3 u_m(s)\|_{L^{2}}^{2}ds)^{1/3} (\int_{0}^{T_0}
\|\partial_3 u_m(s)\|_{L^{2}} ds)^{2/3} \big),
\end{split}
\end{equation}
where $c_0$ and $c_1$ are two positive constants independent of $m$
and $k$. \HB
Since the sequences $u_{\varepsilon_m}(t)$
and $\nabla_h u_{\varepsilon_m}(t)$ are uniformly bounded in
$L^{\infty}((0, T_0),$ $H^{0,1}(\tilde{Q}))$ and in
$L^{2}((0, T_0),H^{0,1}(\widetilde{Q}))$ respectively,
the estimate \eqref{Cauchy5} implies that $u_{\varepsilon_m}$
and $\nabla_h u_{\varepsilon_m}$ are Cauchy sequences in
$L^{\infty}((0,T_0), L^2(\tilde{Q})^3)$ and $L^2((0, T_0),
L^2(\tilde{Q})^3)$ respectively. Thus $u_{\varepsilon_m}$
converges in $L^{\infty}((0,T_0), L^2(\tilde{Q})^3)$ $\cap
L^2((0, T_0), H^{1,0}(\tilde{Q}))$
to an element $u^*$ in this same space. Moreover, $u^*$ and $\nabla_h u^*$
are bounded in $L^{\infty}((0, T_0),$ $H^{0,1}(\tilde{Q}))$ and in
$L^{2}((0, T_0),H^{0,1}(\tilde{Q}))$ respectively. The convergence in
the sense of distributions of $u_{\varepsilon_m}$ to $u^*$ and the
divergence-free property of the sequence $u_{\varepsilon_m}$ imply
that $u^*$ is also divergence-free. Furthermore, one easily shows
that the restriction of $u^*$ to $Q$ is a weak solution of the system $(NS_h)$.
{} From the equality $\Sigma u_{\varepsilon_m}(t)
=u_{\varepsilon_m}(t)$, it follows that $ \Sigma u^*(t)
=u^*(t)$. In particular, $u^*_3(t)$ vanishes on $\Gamma_0 \cup
\Gamma_1$. Finally, we notice that, since $u_{\varepsilon_m}(t)
\in C^0((0,T_0), \tilde{V})$ converges in $L^2((0, T_0),
H^{1,0}(\tilde{Q}))$, $u^*(t)$ satisfies the homogeneous Dirichlet
boundary condition on the lateral boundary $\partial \Omega \times
(-1,1)$ for almost all $t \in (0,T_0)$. \HB
2) One proves the uniqueness of the solution $u^*$ in $L^{\infty}((0,T_0),
H^{0,1}(\tilde{Q}))$ with $\nabla_hu^*$ in $L^2((0, T_0),
H^{0,1}(\tilde{Q}))$ in the same way as the above Cauchy property.
\end{proof}

We now state the classical energy estimate which will be
widely used in the next sections.

\begin{lemma} \label{energie} Let $u_\varepsilon(t) \in C^0([0,T_0],
\tilde{V})$ be the classical
solution of the equations $(NS_{\varepsilon})$ with initial data
$u_{\varepsilon,0} \in \tilde{V}$. Then the following estimates are
satisfied, for any $t \in [0 ,T_0]$, for any $0 \leq t_0 \leq t$,
\begin{equation}
\label{energie1}
\begin{split}
&\|u_\varepsilon(t)\|_{L^2}^{2} \leq  \|u_\varepsilon(0)\|_{L^2}^{2}
\exp (- 2\nu_h \lambda_0^{-1} t),  \cr
&\nu_h \int_{t_0}^{t} \|\nabla_h u_\varepsilon(s)\|_{L^2}^{2} ds +
\varepsilon \int_{t_0}^{t} \|\partial_3 u_\varepsilon(s)\|_{L^2}^{2}
ds \leq  \frac{1}{2} \|u_\varepsilon(0)\|_{L^2}^{2}
\exp (- 2\nu_h \lambda_0^{-1} t_0),
\end{split}
\end{equation}
where $\lambda_0$ is the constant coming from the Poincar\'{e}
inequality.
\end{lemma}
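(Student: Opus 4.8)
The plan is the standard $L^2$ energy argument. First I would take the inner product in $L^2(\tilde{Q})^3$ of the first equation of $(NS_\varepsilon)$ with the solution $u_\varepsilon(t)$; this is legitimate on the open interval $(0,T_0)$ because, as recalled above, $u_\varepsilon$ is a classical solution belonging to $C^0((0,T_0),H^2(\tilde{Q})^3)\cap C^1((0,T_0),L^2(\tilde{Q})^3)$, so every integration by parts below is justified. Three terms have to be checked to vanish. The pressure term: $(\nabla p_\varepsilon,u_\varepsilon)_{L^2}=-\int_{\tilde Q} p_\varepsilon\,\div u_\varepsilon$ plus a boundary term which is zero, because $u_\varepsilon=0$ on $\partial\Omega\times(-1,1)$ and because the vertical periodicity makes the contributions on $\Gamma_0$ and $\Gamma_1$ cancel. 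The convection term: $(u_\varepsilon\nabla u_\varepsilon,u_\varepsilon)_{L^2}=\tfrac12\int_{\tilde Q}u_\varepsilon\cdot\nabla|u_\varepsilon|^2=-\tfrac12\int_{\tilde Q}(\div u_\varepsilon)|u_\varepsilon|^2$ plus a boundary term, which vanishes by $\div u_\varepsilon=0$ and the same boundary considerations. Finally, integrating by parts the two viscous terms gives $-\nu_h(\Delta_h u_\varepsilon,u_\varepsilon)_{L^2}=\nu_h\|\nabla_h u_\varepsilon\|_{L^2}^2$ (the boundary term killed by the lateral Dirichlet condition) and $-\varepsilon(\partial_3^2 u_\varepsilon,u_\varepsilon)_{L^2}=\varepsilon\|\partial_3 u_\varepsilon\|_{L^2}^2$ (the boundary term killed by vertical periodicity). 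Hence one obtains the energy identity
$$\tfrac12\tfrac{d}{dt}\|u_\varepsilon(t)\|_{L^2}^2+\nu_h\|\nabla_h u_\varepsilon(t)\|_{L^2}^2+\varepsilon\|\partial_3 u_\varepsilon(t)\|_{L^2}^2=0,\qquad t\in(0,T_0).$$

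For the first estimate I would invoke the Poincar\'e inequality in the horizontal variable: since $u_\varepsilon(t)$ vanishes on $\partial\Omega\times(-1,1)$ and $\Omega$ is bounded, one has $\|u_\varepsilon(t)\|_{L^2}^2\le\lambda_0\|\nabla_h u_\varepsilon(t)\|_{L^2}^2$, where $\lambda_0$ is the Poincar\'e constant. Substituting this into the energy identity gives $\tfrac{d}{dt}\|u_\varepsilon(t)\|_{L^2}^2+2\nu_h\lambda_0^{-1}\|u_\varepsilon(t)\|_{L^2}^2\le0$ on $(0,T_0)$, and Gronwall's lemma, together with the continuity of $t\mapsto\|u_\varepsilon(t)\|_{L^2}$ on $[0,T_0]$ coming from $u_\varepsilon\in C^0([0,T_0],\tilde V)$, yields $\|u_\varepsilon(t)\|_{L^2}^2\le\|u_\varepsilon(0)\|_{L^2}^2\exp(-2\nu_h\lambda_0^{-1}t)$.

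For the second estimate I would integrate the energy identity from $t_0$ to $t$, obtaining $\tfrac12\|u_\varepsilon(t)\|_{L^2}^2+\nu_h\int_{t_0}^t\|\nabla_h u_\varepsilon(s)\|_{L^2}^2\,ds+\varepsilon\int_{t_0}^t\|\partial_3 u_\varepsilon(s)\|_{L^2}^2\,ds=\tfrac12\|u_\varepsilon(t_0)\|_{L^2}^2$, then drop the non-negative term $\tfrac12\|u_\varepsilon(t)\|_{L^2}^2$ and bound $\tfrac12\|u_\varepsilon(t_0)\|_{L^2}^2$ from above by $\tfrac12\|u_\varepsilon(0)\|_{L^2}^2\exp(-2\nu_h\lambda_0^{-1}t_0)$ using the first estimate applied at time $t_0$. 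This is precisely the claimed bound.

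There is no genuine obstacle here: this is the textbook $L^2$ energy identity. The only points deserving a line of care are the vanishing of the pressure and convection boundary integrals under the mixed lateral-Dirichlet / vertical-periodic boundary conditions on $\tilde Q$, and the use of the purely horizontal (hence anisotropic) Poincar\'e inequality, for which the boundedness of $\Omega$ alone suffices since no vertical derivative of $u_\varepsilon$ enters the left-hand side of the identity.
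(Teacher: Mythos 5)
Your proposal is correct and follows essentially the same route as the paper: take the $L^2$ inner product with $u_\varepsilon$ to get the energy identity, apply the horizontal Poincar\'e inequality and Gronwall for the decay estimate, then integrate from $t_0$ to $t$ and use the first bound at time $t_0$ for the second. The only difference is cosmetic: you spell out the vanishing of the pressure and convection boundary terms, which the paper subsumes in the phrase ``integrate by parts.''
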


\begin{proof} Since $u_{\varepsilon} \in C^0([0,T_0], \tilde{V})$ is
the classical solution of $(NS_{\varepsilon})$, we can take the inner
product in $L^2(\tilde{Q})^3$ of the first equation in
$(NS_{\varepsilon})$ with $u_{\varepsilon}$ and integrate by parts.
We thus obtain, for $0 \leq t \leq T_0$,
\begin{equation}
\label{enaux1}
\partial_t \|u_\varepsilon(t)\|_{L^2}^{2} +  2\nu_h
\|\nabla_h u_\varepsilon(t)\|_{L^2}^{2} + 2 \varepsilon \|\partial_3
u_\varepsilon(t)\|_{L^2}^{2} \leq 0.
\end{equation}
Since $u_\varepsilon$ satisfies homogeneous Dirichlet boundary
conditions on the lateral boundary, there exists a positive constant
$\lambda_0$ depending only on $\Omega$ such that,
\begin{equation}
\label{Poincare}
\| u_\varepsilon\|_{L^2}^{2} \leq \lambda_0 \| \nabla_h 
u_\varepsilon\|_{L^2}^{2}.
\end{equation}
The inequalities \eqref{enaux1} and \eqref{Poincare} imply that, for 
$0 \leq t \leq T_0$,
$$
  \partial_t \|u_\varepsilon(t)\|_{L^2}^{2} +  2\nu_h
\lambda_0^{-1} \|u_\varepsilon(t)\|_{L^2}^{2} \leq 0.
$$
Integrating the previous inequality and applying Gronwall Lemma, we
obtain the first inequality in \eqref{energie1}. Integrating now the
inequality \eqref{enaux1} from $t_0$ to $t$ and taking into account
the first estimate in \eqref{energie1}, we obtain obtain the second
estimate of \eqref{energie1}.
\end{proof}
\vskip 2mm

We continue  this section by an auxiliary proposition, which will be used
several times in the proof of global existence of solutions of the
system $(NS_{\varepsilon})$.

\begin{proposition} \label{gradh}
  Let $u_{\varepsilon} \in C^0([0,T_0), \tilde{V})$ be a classical 
solution of Problem
$(NS_{\varepsilon})$. Let $T_n <T_0$ be a sequence
converging to $T_0$ when $n$ goes to infinity. If
$u_{\varepsilon}(t)$ is uniformly bounded in $L^{\infty}((0,T_n),
H^{0,1}(\tilde{Q})) \cap L^2((0,T_n),H^{0,1}(\tilde{Q}))$ and if
$\nabla_h u_{\varepsilon}$ and  $\varepsilon \partial_3 u_{\varepsilon}$
are uniformly bounded in $L^2((0,T_n),H^{0,1}(\tilde{Q}))$
as $n$ goes to infinity, then $u_{\varepsilon}$ is uniformly bounded
in $C^0([0,T_n] ,\tilde{V})$ and the classical solution
$u_{\varepsilon}$ exists on a time interval $[0, T_{\varepsilon})$
where $T_{\varepsilon} >T_0$. In particular, if $T_n$ is a sequence 
which goes to
infinity when $n$ goes to infinity, then the classical solution
$u_{\varepsilon}$ exists globally.
\end{proposition}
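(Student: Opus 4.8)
The aim is to establish a uniform bound $\sup_{t\in[0,T_0)}\|u_{\varepsilon}(t)\|_{\tilde V}<+\infty$, where $T_0:=\lim_n T_n\in(0,+\infty]$; granting this, the blow-up alternative \eqref{explose} rules out $T_{\varepsilon}=T_0<+\infty$, so that $T_{\varepsilon}>T_0$ (and when $T_0=+\infty$ the solution is global, since one may argue on each finite interval $[0,T]$ and conclude $T_{\varepsilon}>T$). Since, for divergence-free fields, $\|v\|_{\tilde V}^2=\|v\|_{H^{0,1}(\tilde Q)}^2+\|\nabla_h v\|_{L^2}^2$, and the hypothesis that $u_{\varepsilon}$ is bounded in $L^{\infty}((0,T_n),H^{0,1}(\tilde Q))$ already controls $\|u_{\varepsilon}(t)\|_{H^{0,1}(\tilde Q)}$ uniformly in $t$ and $n$, the whole question reduces to a uniform bound on $\|\nabla_h u_{\varepsilon}(t)\|_{L^2}$.

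To obtain it I would carry out an $H^1$-type energy estimate. Let $\P$ denote the Leray projector of $\tilde Q$ (adapted to the lateral Dirichlet and vertical periodicity conditions) and $A_h:=\P(-\Delta_h)$ the associated ``horizontal Stokes operator'', which is self-adjoint, nonnegative and satisfies $(A_h v,v)_{L^2}=\|\nabla_h v\|_{L^2}^2$ for $v\in\tilde V$. Taking the $L^2(\tilde Q)$ inner product of the momentum equation of $(NS_{\varepsilon})$ with $A_h u_{\varepsilon}$, the pressure disappears because $A_h u_{\varepsilon}$ is orthogonal to gradients; using $\P\partial_t u_{\varepsilon}=\partial_t u_{\varepsilon}$, $\P\partial_3^2 u_{\varepsilon}=\partial_3^2 u_{\varepsilon}$ (the latter because $\div\partial_3^2 u_{\varepsilon}=0$ and $\partial_3 u_{\varepsilon}=0$ on $\partial\Omega\times(-1,1)$), the self-adjointness of $\P$, and the periodicity in $x_3$, one is led to
\begin{equation*}
\frac{1}{2}\frac{d}{dt}\|\nabla_h u_{\varepsilon}\|_{L^2}^2+\nu_h\|A_h u_{\varepsilon}\|_{L^2}^2+\varepsilon\|\nabla_h\partial_3 u_{\varepsilon}\|_{L^2}^2=-(u_{\varepsilon}\nabla u_{\varepsilon},A_h u_{\varepsilon})_{L^2}.
\end{equation*}
The key structural facts are that the dissipative term $\nu_h\|A_h u_{\varepsilon}\|_{L^2}^2$ is not degenerate as $\varepsilon\to0$, and that, applying the two-dimensional Stokes regularity estimate in each slice $\Omega\times\{x_3\}$ (with right-hand side $(A_h u_{\varepsilon})_h$ and prescribed divergence $-\partial_3 u_{\varepsilon,3}$) and integrating in $x_3$, it controls $\|\nabla_h^2 u_{\varepsilon,h}\|_{L^2(\tilde Q)}$ modulo $\|\partial_3 u_{\varepsilon}\|_{L^2}$ and $\|\nabla_h\partial_3 u_{\varepsilon}\|_{L^2}$, which are bounded in $L^{\infty}_t$ and $L^2_t$ respectively.

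The heart of the matter — and the step I expect to be the main obstacle — is to bound $(u_{\varepsilon}\nabla u_{\varepsilon},A_h u_{\varepsilon})_{L^2}$ so that it can be absorbed, up to Gronwall-type terms, into the dissipative term. I would split $u_{\varepsilon}\nabla u_{\varepsilon}=u_{\varepsilon,h}\cdot\nabla_h u_{\varepsilon}+u_{\varepsilon,3}\partial_3 u_{\varepsilon}$ and estimate each contribution by the anisotropic inequalities of Lemma \ref{LestLpLq}, treating the horizontal and vertical directions asymmetrically: Lemma \ref{LestLpLq}, part 2), applied in $x_3$ yields the $L^{\infty}_v$-type norms in terms of quantities controlled by hypothesis, while the two-dimensional Gagliardo--Nirenberg and Poincar\'e inequalities in the slices $\Omega\times\{x_3\}$ — legitimate because $u_{\varepsilon}$, $\partial_3 u_{\varepsilon}$ and $\partial_3 u_{\varepsilon,3}=-\div_h u_{\varepsilon,h}$ all vanish on $\partial\Omega\times(-1,1)$ — yield the $L^4_h$-type norms; the horizontal second derivatives that inevitably appear (only of the components $u_{\varepsilon,h}$, thanks to the divergence-free identity) are, via the slice estimate above and a Young inequality, absorbed into $\nu_h\|A_h u_{\varepsilon}\|_{L^2}^2$ with a small coefficient. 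The delicate point is precisely to organize these anisotropic splittings so that what remains is at most linear in $\|\nabla_h u_{\varepsilon}\|_{L^2}^2$ with an $L^1_t$-coefficient, plus an $L^1_t$ source — which is where the special structure of the anisotropic equations is essential.

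Carrying this out, one is led to
\begin{equation*}
\frac{d}{dt}\|\nabla_h u_{\varepsilon}\|_{L^2}^2+\frac{\nu_h}{2}\|A_h u_{\varepsilon}\|_{L^2}^2\le G(t)\,\|\nabla_h u_{\varepsilon}\|_{L^2}^2+H(t),
\end{equation*}
where $G$ and $H$ are expressions built from $\|u_{\varepsilon}\|_{L^2},\|\partial_3 u_{\varepsilon}\|_{L^2}$ (bounded in $L^{\infty}_t$) and from $\|\nabla_h u_{\varepsilon}\|_{L^2},\|\nabla_h\partial_3 u_{\varepsilon}\|_{L^2},\varepsilon\|\partial_3^2 u_{\varepsilon}\|_{L^2}$ (bounded in $L^2_t$), hence belong to $L^1((0,T_0))$ with norms bounded independently of $n$ — directly on any finite interval, and on $(0,+\infty)$ after additionally invoking the exponential decay of Lemma \ref{energie}. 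Gronwall's lemma then gives $\sup_{t\in[0,T_0)}\|\nabla_h u_{\varepsilon}(t)\|_{L^2}<+\infty$, whence $\sup_{t\in[0,T_0)}\|u_{\varepsilon}(t)\|_{\tilde V}<+\infty$; the blow-up alternative \eqref{explose} then yields $T_{\varepsilon}>T_0$, and global existence when $T_n\to+\infty$.
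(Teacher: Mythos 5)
Your skeleton coincides with the paper's proof of Proposition \ref{gradh}: reduce the statement to a uniform bound on $\|\nabla_h u_{\varepsilon}(t)\|_{L^2}$ up to $T_0$, obtain that bound by testing the equation with $-P\Delta_h u_{\varepsilon}$ (your $A_h u_{\varepsilon}$), use an elliptic Stokes-type regularity estimate to absorb the second-order horizontal derivatives created by the nonlinearity into $\nu_h\|P\Delta_h u_{\varepsilon}\|_{L^2}^2$, close by Gronwall with coefficients in $L^1_t$ built from exactly the quantities the hypotheses control (including $\varepsilon\,\partial_3^2 u_{\varepsilon}$ in $L^2_tL^2$), and conclude with the blow-up alternative \eqref{explose}. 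So the approach is the right one and essentially the paper's.

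The gap is that the decisive step, namely the estimate of the nonlinear term yielding $\frac{d}{dt}\|\nabla_h u_{\varepsilon}\|_{L^2}^2\le G(t)\|\nabla_h u_{\varepsilon}\|_{L^2}^2+H(t)$ with $G,H\in L^1((0,T_0))$, is announced (``carrying this out, one is led to'') but never carried out, and this is where essentially all the work of the paper's proof lies (estimates \eqref{gradhaux2}--\eqref{gradhaux6}). Moreover, the one concrete claim you make about this step is not automatic: it is not true, with the standard splitting, that only horizontal second derivatives of $u_{\varepsilon,h}$ appear. Bounding the contribution of $u_{\varepsilon,h}\cdot\nabla_h u_{\varepsilon}$ by Gagliardo--Nirenberg in the horizontal slices produces $D_h\nabla_h u_{\varepsilon}$ of \emph{all three} components, in particular $D_h\nabla_h u_{\varepsilon,3}$, which your slice-wise 2D Stokes estimate (solving for $u_{\varepsilon,h}$ with divergence data $-\partial_3 u_{\varepsilon,3}$) does not control; the divergence-free condition converts $\partial_3 u_{\varepsilon,3}$ into $-\div_h u_{\varepsilon,h}$ but gives no control of $\nabla_h^2 u_{\varepsilon,3}$ by $\nabla_h^2 u_{\varepsilon,h}$. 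Your variant can be rescued (for instance by putting $\nabla_h u_{\varepsilon,3}$ in $L^{\infty}_v(L^2_h)$ and $u_{\varepsilon,h}$ in $L^2_v(L^{\infty}_h)$, the latter costing only $\nabla_h^2 u_{\varepsilon,h}$), but this must be written out; the paper sidesteps the issue by invoking the regularity estimate \eqref{gradhaux3bis} for the full three-dimensional stationary Stokes-type problem, with a constant $K_0(\varepsilon)$ allowed to depend on $\varepsilon$, which is harmless here since $\varepsilon$ is fixed and the Gronwall constants need only be uniform in $T_n$. A minor point: invoking the decay of Lemma \ref{energie} for the case $T_n\to+\infty$ is unnecessary, since global existence only requires excluding blow-up at each finite time.
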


\begin{proof} Let $u_{\varepsilon} \in C^0([0,T_0), \tilde{V})$ be a
(local) classical solution of Problem $(NS_{\varepsilon})$. In order
to prove the proposition, we have to show that $\nabla_h u_{\varepsilon}$
is uniformly bounded in $L^{\infty}((0,T_n),L^2(\tilde{Q})^3)$ as $n$
goes to infinity. Since
$u_{\varepsilon}$ is a classical solution, all the a priori estimates
made below can be justified. Let $P$ be the classical Leray
projection of $L^2(\tilde{Q})^3$ onto $\tilde{H}$, where
$$
\tilde{H} =\{u\in L^2(\tilde Q)^3 \, | \, \div u=0\,
  ;\,  \gamma_nu|_{\partial\Omega\times (0,1)}=0\, ;
  \, u(x_h, x_3) = u(x_h,x_3 +2)\}.
$$
Taking the inner product in
$L^2(\tilde{Q})$ of the first equation of $(NS_{\varepsilon})$ with
$-P\Delta_hu_{\varepsilon}$, we obtain the equality
$$
-(\partial_t u_{\varepsilon}, \Delta_h u_{\varepsilon}) + \nu_h \|P
\Delta_h u_{\varepsilon}\|_{L^{2}}^{2} + \varepsilon(\partial_3^2
u_{\varepsilon}, P\Delta_h u_{\varepsilon}) = -(u_{\varepsilon}\cdot
\nabla u_{\varepsilon}, P \Delta_hu_{\varepsilon}).
$$
We remark that, for $0 <t <T_0$, $\partial_t u_{\varepsilon}$,
$\partial_3 u_{\varepsilon}$ and
$\partial^2_3 u_{\varepsilon}$ vanish on the lateral boundary and
are periodic in $x_3$. Moreover, the divergence of $\partial^2_3
u_{\varepsilon}$ vanishes. These properties imply on the one hand that
$$
-\int_{\tilde{Q}} \partial_t u_{\varepsilon}\cdot \Delta_h
u_{\varepsilon} dx= \int_{\tilde{Q}} \partial_t \nabla_h
u_{\varepsilon} \cdot \nabla_h u_{\varepsilon} dx =
\frac{1}{2} \partial_t \|\nabla_h u_{\varepsilon}\|_{L^{2}}^{2} .
$$
On the other hand, we can write, for $0 <t <T_0$,
$$
\int_{\tilde{Q}} \partial_3^2 u_{\varepsilon} \cdot P \Delta_h
u_{\varepsilon}dx
= \int_{\tilde{Q}} \partial_3^2 u_{\varepsilon} \cdot \Delta_h
u_{\varepsilon}dx = - \int_{\tilde{Q}} \partial_3 u_{\varepsilon} 
\cdot \Delta_h
\partial_3 u_{\varepsilon}dx = \|\nabla_h \partial_3 
u_{\varepsilon}\|_{L^{2}}^{2}.
$$
The previous three equalities imply that, for $0 <t <T_0$,
\begin{equation}
\label{gradhaux1}
\partial_t \|\nabla_h u_{\varepsilon}\|_{L^{2}}^{2} +  \nu_h \|P
\Delta_h u_{\varepsilon}\|_{L^{2}}^{2} +
2 \varepsilon \|\nabla_h \partial_3 u_{\varepsilon}\|_{L^{2}}^{2}
\leq \frac{1}{\nu_h} \|u_{\varepsilon} \cdot \nabla 
u_{\varepsilon}\|_{L^{2}}^{2}.
\end{equation}
To estimate the term $ \|u_{\varepsilon} \cdot \nabla
u_{\varepsilon}\|_{L^{2}}^{2}$, we write
\begin{equation}
\label{gradhaux2}
\begin{split}
\|u_{\varepsilon} \cdot \nabla u_{\varepsilon}\|_{L^{2}}^{2} =&
\int_{\tilde{Q}} (u_{\varepsilon,h} \cdot \nabla_h u_{\varepsilon}
+u_{\varepsilon,3} \cdot \partial_3 u_{\varepsilon})^2 dx \cr
\leq &
\, 2  \|u_{\varepsilon,h} \cdot \nabla_h u_{\varepsilon}\|_{L^{2}}^{2}
+ 2 \|u_{\varepsilon,3} \cdot \partial_3 u_{\varepsilon}\|_{L^{2}}^{2}.
\end{split}
\end{equation}
It remains to bound both terms in the right hand side of the
inequality \eqref{gradhaux2}.
Using the Gagliardo Nirenberg and the Poincar\'{e} inequalities, we
can write
\begin{equation*}
\begin{split}
\|u_{\varepsilon,h} \cdot \nabla_h u_{\varepsilon}\|_{L^{2}}^{2}
\leq &\int_{-1}^{1} \|u_{\varepsilon}(.,x_3)\|_{L^4_h}^2 \| \nabla_h
u_{\varepsilon}\|_{L^4_h}^2 dx_3 \cr
\leq & c_0 \Big( \int_{-1}^{1} \|u_{\varepsilon}(.,x_3)\|_{L^2_h}\| \nabla_h
u_{\varepsilon}\|_{L^2_h}^2  \|D_h \nabla_h
u_{\varepsilon}(.,x_3)\|_{L^2_h}dx_3 \cr
& + \int_{-1}^{1} \|u_{\varepsilon}(.,x_3)\|_{L^2_h}\| \nabla_h
u_{\varepsilon}\|_{L^2_h}^3 dx_3 \Big)\cr
\leq & c_1 \Big(\|u_{\varepsilon}\|_{L^{\infty}_v(L^2_h)}
\|\nabla_h u_{\varepsilon}\|_{L^{\infty}_v(L^2_h)}
\|\nabla_h u_{\varepsilon}\|_{L^2} \|D_h\nabla_h
u_{\varepsilon}\|_{L^2} \cr
& + \|u_{\varepsilon}\|_{L^{\infty}_v(L^2_h)}
\|\nabla_h u_{\varepsilon}\|_{L^{\infty}_v(L^2_h)}
\|\nabla_h u_{\varepsilon}\|_{L^2}^2 \Big) .
\end{split}
\end{equation*}
Applying now Lemma \ref{LestLpLq} to the previous inequality, we
obtain,
\begin{equation}
\label{gradhaux3}
\begin{split}
\|u_{\varepsilon,h} \cdot \nabla_h u_{\varepsilon}\|_{L^{2}}^{2}
\leq & c_2 \Big(\|u_{\varepsilon}\|_{L^2} + \|\partial_3
u_{\varepsilon}\|_{L^2}^{1/2} \|u_{\varepsilon}\|_{L^2}^{1/2} \Big)
\cr
& \times \Big( \|\nabla_h u_{\varepsilon}\|_{L^2} +
\|\partial_3\nabla_h u_{\varepsilon}\|_{L^2}^{1/2}
\|\nabla_h u_{\varepsilon}\|_{L^2}^{1/2} \Big) \cr
& \times  \Big( \|\nabla_h
u_{\varepsilon}\|_{L^2} +\|D_h \nabla_h u_{\varepsilon}\|_{L^2} \Big)
\|\nabla_h u_{\varepsilon}\|_{L^2} .
\end{split}
\end{equation}
The classical regularity theorem for the stationary Stokes problem (see for
example \cite{ConstantinFoias}, \cite{SoloScadi} or \cite{Temam})
implies that there exists a positive constant $K_0(\varepsilon)$,
which could depend on $\varepsilon$, such that,
\begin{equation}
\label{gradhaux3bis}
\|D_h \nabla_h u_{\varepsilon}\|_{L^2} \leq K_0(\varepsilon) \big(
\|P  \Delta_h u_{\varepsilon}\|_{L^2} +
\frac{\varepsilon}{\nu_h} \|\partial_3^2
u_{\varepsilon}\|_{L^2}\big).
\end{equation}
Using the Young inequality $2 ab \leq a^2 + b^2$, we deduce from
\eqref{gradhaux3} and \eqref{gradhaux3bis} that
\begin{equation*}
\begin{split}
  \|u_{\varepsilon,h} \cdot \nabla_h   u_{\varepsilon} \|_{L^{2}}^{2}
\leq & 4 c_2 \Big(\|u_{\varepsilon}\|_{L^2} + \|\partial_3
u_{\varepsilon}\|_{L^2} \Big)
\Big( \|\nabla_h u_{\varepsilon}\|_{L^2} + \|\partial_3\nabla_h
u_{\varepsilon}\|_{L^2} \Big)  \cr
\times \Big( &\|\nabla_h
u_{\varepsilon}\|_{L^2} + K_0(\varepsilon) (\|P  \Delta_h 
u_{\varepsilon}\|_{L^2} +
\frac{\varepsilon}{\nu_h} \|\partial_3^2
u_{\varepsilon}\|_{L^2})\Big) \|\nabla_h u_{\varepsilon}\|_{L^2} ,
\end{split}
\end{equation*}
and also
\begin{equation}
\label{gradhaux4}
\begin{split}
\frac{2}{\nu_h} \|u_{\varepsilon,h} \cdot \nabla_h   u_{\varepsilon}
\|_{L^{2}}^{2}
\leq   \frac{\nu_h}{2} &\|P  \Delta_h u_{\varepsilon}\|_{L^2}^2
+\frac{\varepsilon^2}{\nu_h} \|\partial_3^2
u_{\varepsilon}\|_{L^2}^2 + \frac{\nu_h}{2} \|\nabla_h 
u_{\varepsilon}\|_{L^2}^2 \cr
  + c_5 &\frac{K_0(\varepsilon)^2 +1 }{\nu_h^3}
(\|u_{\varepsilon}\|_{L^2}^2 + \|\partial_3
u_{\varepsilon}\|_{L^2}^2 ) \cr
& \times
( \|\nabla_h u_{\varepsilon}\|_{L^2}^2 + \|\partial_3\nabla_h
u_{\varepsilon}\|_{L^2}^2) \|\nabla_h u_{\varepsilon}\|_{L^2}^2.
\end{split}
\end{equation}
Likewise, using the Gagliardo Nirenberg and the Poincar\'{e}
inequalities, we can write
\begin{equation*}
\begin{split}
\|u_{\varepsilon,3} \cdot \partial_3 u_{\varepsilon}\|_{L^{2}}^{2}
\leq c_1 \|u_{\varepsilon,3}\|_{L^{\infty}_v(L^2_h)}
\|\nabla_h u_{\varepsilon,3}\|_{L^{\infty}_v(L^2_h)}
\|\partial_3 u_{\varepsilon}\|_{L^2} \|\nabla_h \partial_3
u_{\varepsilon}\|_{L^2} ,
\end{split}
\end{equation*}
which implies, due to Lemma \ref{LestLpLq},
\begin{equation*}
\begin{split}
\|u_{\varepsilon,3} \cdot \partial_3 u_{\varepsilon}\|_{L^{2}}^{2}
   \leq &c_2 \Big(\|u_{\varepsilon}\|_{L^2} + \|\partial_3
u_{\varepsilon,3}\|_{L^2}^{1/2} \|u_{\varepsilon,3}\|_{L^2}^{1/2} \Big)
\cr
& \times \Big( \|\nabla_h u_{\varepsilon,3}\|_{L^2} +
\|\partial_3\nabla_h u_{\varepsilon,3}\|_{L^2}^{1/2}
\|\nabla_h u_{\varepsilon,3}\|_{L^2}^{1/2} \Big) \cr
& \times  \|\partial_3 u_{\varepsilon}\|_{L^2}
\|\nabla_h \partial_3u_{\varepsilon}\|_{L^2} .
\end{split}
\end{equation*}
Using the Young inequalities $ab \leq \frac{1}{2} a^2 + \frac{1}{2} 
b^2$ and $ab
\leq \frac{1}{4} a^4 + \frac{3}{4} b^{4/3}$, we
deduce from the previous inequality that
\begin{equation}
\label{gradhaux5}
\begin{split}
\frac{2}{\nu_h} \|u_{\varepsilon,3} \cdot \partial_3 
u_{\varepsilon}\|_{L^{2}}^{2}
   \leq \frac{c_6}{\nu_h} \Big(\|u_{\varepsilon}\|_{L^2} + \|\partial_3
u_{\varepsilon,3}\|_{L^2} \Big) &\|\partial_3  u_{\varepsilon}\|_{L^2}
\cr
\times &\big( \|\nabla_h u_{\varepsilon,3}\|_{L^2}^2 +
\|\partial_3\nabla_h u_{\varepsilon,3}\|_{L^2}^{2}
\big) .
\end{split}
\end{equation}
Finally, we deduce from the estimates \eqref{gradhaux1},
\eqref{gradhaux4} and, \eqref{gradhaux5} that, for $0 \leq t <T_0$,
\begin{equation}
\label{gradhaux6}
\begin{split}
\partial_t \|\nabla_h u_{\varepsilon}(t)\|_{L^{2}}^{2} + \frac{\nu_h}{2}
&\|P \Delta_h u_{\varepsilon}\|_{L^{2}}^{2} \cr
\leq &
\frac{\varepsilon^2}{\nu_h} \|\partial_3^2
u_{\varepsilon}(t)\|_{L^2}^2 + \frac{\nu_h}{2} \|\nabla_h
u_{\varepsilon}(t)\|_{L^2}^2 \cr
& + L_\varepsilon(u_{\varepsilon}(t))
\big(\frac{2c_6}{\nu_h} + c_5\frac{(K_0(\varepsilon)^2 +1)}
{\nu_h^3} \|\nabla_h u_{\varepsilon}(t)\|_{L^2}^2 \big) ,
\end{split}
\end{equation}
where
$$
L_{\varepsilon}(u_{\varepsilon}(t))= (\|u_{\varepsilon}(t)\|_{L^2}^2 
+ \|\partial_3
u_{\varepsilon}(t)\|_{L^2}^2) \,
( \|\nabla_h u_{\varepsilon}(t)\|_{L^2}^2 + \|\partial_3\nabla_h
u_{\varepsilon}(t)\|_{L^2}^2 ) .
$$
Integrating the inequality \eqref{gradhaux6} from $0$ to $T_n$, where
$0 < T_n <T_0$, we infer from \eqref{gradhaux6} that, for any $T_n$,
with $0 < T_n <T_0$,
\begin{equation}
\label{gradhaux6bis}
\begin{split}
\|\nabla_h u_{\varepsilon}(T_n)\|_{L^{2}}^{2} &+ \frac{\nu_h}{2}
\int_{0}^{T_n} \|P \Delta_h u_{\varepsilon}(s)\|_{L^{2}}^{2} ds \cr
\leq  \,
& \int_{0}^{T_n}
\big( \frac{\varepsilon^2}{\nu_h} \|\partial_3^2
u_{\varepsilon}(s)\|_{L^2}^2 + \frac{\nu_h}{2} \|\nabla_h
u_{\varepsilon}(s)\|_{L^2}^2
  + \frac{2c_6}{\nu_h} L_\varepsilon(u_{\varepsilon}(s))\big) ds \cr
   +&\|\nabla_h u_{\varepsilon}(0)\|_{L^{2}}^{2}
  + \frac{c_5(K_0(\varepsilon)^2 +1)}{\nu_h^3} \int_{0}^{T_n}
L_\varepsilon(u_{\varepsilon}(s)) \|\nabla_h
u_{\varepsilon}(s)\|_{L^2}^2 ds .
\end{split}
\end{equation}
Using Gronwall Lemma
and taking into account the hypotheses made on
$u_{\varepsilon}$, we deduce from \eqref{gradhaux6bis} that, for any $T_n$,
with $0 < T_n <T_0$,
\begin{equation}
\label{gradhaux7}
\begin{split}
\|\nabla_h u_{\varepsilon}(T_n)\|_{L^{2}}^{2} \leq &
\Big[\int_{0}^{T_n}
\big( \frac{\varepsilon^2}{\nu_h} \|\partial_3^2
u_{\varepsilon}(s)\|_{L^2}^2 + \frac{\nu_h}{2} \|\nabla_h
u_{\varepsilon}(s)\|_{L^2}^2
  + \frac{2c_6}{\nu_h} L_\varepsilon(u_{\varepsilon}(s))\big) ds \cr
  & +\|\nabla_h u_{\varepsilon}(0)\|_{L^{2}}^{2} \Big]
  \exp \Big(\frac{c_5(K_0(\varepsilon)^2 +1)}{\nu_h^3} \int_{0}^{T_n}
L_\varepsilon(u_{\varepsilon}(s)) ds\Big) \cr
\leq & \big[\|\nabla_h u_{\varepsilon}(0)\|_{L^{2}}^{2} + k_1 +
\frac{2c_6}{\nu_h} k_2 \big]   \exp \Big(\frac{c_5(K_0(\varepsilon)^2
+1)}{\nu_h^3} k_2\Big) ,
\end{split}
\end{equation}
where $k_1$ and $k_2$ are positive constants independent of $T_n$
($k_1$ and $k_2$ can depend on $\varepsilon$).
Thus the proposition is proved.
\end{proof}
\vskip 1mm

We end this section by giving an upper bound of the $H^{0,2}$-norm of
the solution
$u_{\varepsilon}(t)$ of the system $(NS_{\varepsilon})$ on any
subinterval of the maximal interval of existence, when the initial
data $u_{\varepsilon,0}$ belong to $\tilde{V} \cap H^2(\tilde{Q})^3$.

\begin{proposition} \label{propH02}
Let $u_{\varepsilon} \in C^0([0,T_0], \tilde{V})$ be a classical
solution of Problem $(NS_{\varepsilon})$ with initial data
$u_{\varepsilon,0}$ in $H^2(\tilde{Q})^3 \cap \tilde{V}$.  We assume
that $u_{\varepsilon}(t)$ (resp.  $\nabla_h u_{\varepsilon}$) is
uniformly bounded with respect to $\varepsilon$ in
$L^{\infty}((0,T_0], H^{0,1}(\tilde{Q})) \cap
L^2((0,T_0],H^{0,1}(\tilde{Q}))$ (resp.  in
$L^2((0,T_0),H^{0,1}(\tilde{Q}))$).  Then $\partial_3^2
u_{\varepsilon}$ (respectively $\partial_3^2 \nabla_hu_{\varepsilon}$)
is bounded in $L^{\infty}((0,T_0), L^2(\tilde{Q}^3))$ (respectively
$L^{2}((0,T_0), L^2(\tilde{Q}^3))$)
uniformly with respect to $\varepsilon$ and the following estimate
holds, for any $0 \leq t \leq T_0$,
\begin{equation}
\label{estimH02}
\begin{split}
\|\partial_3^2 &u_{\varepsilon}(t) \|_{L^{2}} + \nu_h 
\int_{0}^{t}\|\nabla_h \partial_3^2
u_{\varepsilon}(s)\|_{L^{2}}^{2} ds\cr
& \leq\, \big[\exp (\frac{C}{\nu_h} \int_{0}^{T_0} \|\nabla_h
u_{\varepsilon}(s)\|_{H^{0,1}}^2 ds)\big]
  \Big(\|\partial_3^2 u_{\varepsilon,0}\|_{L^{2}}^{2} \cr
& ~\quad +\frac{C}{\nu_h}
\sup_{0\leq s \leq T_0}\big(\|\partial_3 u_{\varepsilon}(s)\|_{L^2}^2 +  \nu_h
\|\partial_3 u_{\varepsilon}(s)\|_{L^2}
\big)  \int_{0}^{T_0} \|\nabla_h u_{\varepsilon}(s)\|_{H^{0,1}}^2 ds
\Big) .
\end{split}
\end{equation}
\end{proposition}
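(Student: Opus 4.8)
Here is the route I would follow. Fix $\varepsilon>0$. Since the operator $-\nu_h\Delta_h-\varepsilon\partial_{x_3}^2$ is uniformly elliptic, $u_\varepsilon$ is $C^\infty$ for $t>0$ and, for data in $H^2(\tilde Q)^3\cap\tilde V$, belongs to $C^0([0,T_0],H^2(\tilde Q)^3)$, so all the computations below are legitimate (if need be, carry them out on $[\delta,T_0]$ with $\delta>0$ and let $\delta\to 0^+$). The plan is an $H^{0,2}$-energy estimate: apply $\partial_{x_3}^2$ to the first equation of $(NS_\varepsilon)$ and take the $L^2(\tilde Q)^3$ inner product with $\partial_{x_3}^2u_\varepsilon$. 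The vector $\partial_3^2u_\varepsilon$ is divergence free, $2$-periodic in $x_3$ and vanishes on $\partial\Omega\times(-1,1)$ (since $u_\varepsilon$ does and $\partial_3$ is tangential there, the contributions on the periodic faces cancelling), so the pressure term drops out: $(\nabla\partial_3^2p_\varepsilon,\partial_3^2u_\varepsilon)_{L^2}=0$. Integration by parts turns the horizontal viscosity into $\nu_h\|\nabla_h\partial_3^2u_\varepsilon\|_{L^2}^2$ and the artificial viscosity into $\varepsilon\|\partial_3^3u_\varepsilon\|_{L^2}^2\geq 0$, which we discard, while the time derivative gives $\frac12\partial_t\|\partial_3^2u_\varepsilon\|_{L^2}^2$. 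Hence
$$\frac12\partial_t\|\partial_3^2u_\varepsilon\|_{L^2}^2+\nu_h\|\nabla_h\partial_3^2u_\varepsilon\|_{L^2}^2\leq\big|\big(\partial_3^2(u_\varepsilon\nabla u_\varepsilon),\partial_3^2u_\varepsilon\big)_{L^2}\big|.$$

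Estimating this nonlinear term is the heart of the proof, and I expect it to be the main obstacle. By Leibniz, $\partial_3^2(u_\varepsilon\nabla u_\varepsilon)=(\partial_3^2u_\varepsilon)\cdot\nabla u_\varepsilon+2(\partial_3u_\varepsilon)\cdot\nabla\partial_3u_\varepsilon+u_\varepsilon\cdot\nabla\partial_3^2u_\varepsilon$; the last term, paired with $\partial_3^2u_\varepsilon$, vanishes by the divergence-free condition and the boundary conditions, exactly as in \eqref{uvv1}. In the remaining terms one separates horizontal and vertical components and, crucially, uses the divergence-free identities $\partial_3u_{\varepsilon,3}=-\div_h u_{\varepsilon,h}$ and $\partial_3^2u_{\varepsilon,3}=-\div_h\partial_3u_{\varepsilon,h}$, so that no genuine second vertical derivative of the horizontal component survives as a ``large'' factor, apart from $\partial_3^2u_\varepsilon$ itself, which is precisely the quantity being estimated. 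Applying Lemma \ref{LestLpLq} (part 1 to factors carrying a horizontal derivative, part 2 to the others) together with the Poincar\'e inequality, each product is then bounded by $\|\partial_3^2u_\varepsilon\|_{L^2}$ and $\|\nabla_h\partial_3^2u_\varepsilon\|_{L^2}$ appearing to the first power, times quantities controlled by $\|\nabla_hu_\varepsilon\|_{H^{0,1}}$ and $\|\partial_3u_\varepsilon\|_{L^2}$ only; for instance $|((\partial_3^2u_{\varepsilon,h})\cdot\nabla_hu_\varepsilon,\partial_3^2u_\varepsilon)|\leq C\|\partial_3^2u_\varepsilon\|_{L^2}\|\nabla_h\partial_3^2u_\varepsilon\|_{L^2}\|\nabla_hu_\varepsilon\|_{H^{0,1}}$, and $2|((\partial_3u_{\varepsilon,3})\partial_3^2u_\varepsilon,\partial_3^2u_\varepsilon)|=2|(\div_h u_{\varepsilon,h}\,\partial_3^2u_\varepsilon,\partial_3^2u_\varepsilon)|$ admits the same bound, while the two cross terms, after substitution of $\partial_3^2u_{\varepsilon,3}=-\div_h\partial_3u_{\varepsilon,h}$, produce factors of the form $\|\partial_3u_\varepsilon\|_{L^2}^{1/2}\|\nabla_hu_\varepsilon\|_{H^{0,1}}$ to fractional powers (the additive ``$+\|g\|_{L^2}$'' in the affine estimate \eqref{estLpLq2} being responsible for the mixed power $\nu_h\|\partial_3u_\varepsilon\|_{L^2}$ that appears in \eqref{estimH02}). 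Absorbing all the $\|\nabla_h\partial_3^2u_\varepsilon\|_{L^2}^2$ contributions into the left-hand side by Young's inequality, a careful bookkeeping of the lower-order terms leads to a differential inequality of the form
$$\partial_t\|\partial_3^2u_\varepsilon\|_{L^2}^2+\nu_h\|\nabla_h\partial_3^2u_\varepsilon\|_{L^2}^2\leq\frac{C}{\nu_h}\|\nabla_hu_\varepsilon\|_{H^{0,1}}^2\,\|\partial_3^2u_\varepsilon\|_{L^2}^2+\frac{C}{\nu_h}\big(\|\partial_3u_\varepsilon\|_{L^2}^2+\nu_h\|\partial_3u_\varepsilon\|_{L^2}\big)\|\nabla_hu_\varepsilon\|_{H^{0,1}}^2.$$

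Finally, one integrates on $[0,t]$ and applies Gronwall's lemma, using $\|\partial_3^2u_\varepsilon(0)\|_{L^2}=\|\partial_3^2u_{\varepsilon,0}\|_{L^2}$; bounding the source term by $\frac{C}{\nu_h}\sup_{[0,T_0]}(\|\partial_3u_\varepsilon\|_{L^2}^2+\nu_h\|\partial_3u_\varepsilon\|_{L^2})\,\|\nabla_hu_\varepsilon(s)\|_{H^{0,1}}^2$ before integrating it, and keeping the term $\nu_h\int_0^t\|\nabla_h\partial_3^2u_\varepsilon\|_{L^2}^2$ on the left, one obtains exactly \eqref{estimH02}. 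Since the hypotheses provide $\varepsilon$-uniform bounds for $\|\nabla_hu_\varepsilon\|_{L^2((0,T_0),H^{0,1})}$, $\sup_{(0,T_0]}\|\partial_3u_\varepsilon\|_{L^2}$ and (trivially) $\|\partial_3^2u_{\varepsilon,0}\|_{L^2}$, the right-hand side of \eqref{estimH02} is uniform in $\varepsilon$, whence the announced uniform bounds on $\partial_3^2u_\varepsilon$ in $L^\infty((0,T_0),L^2)$ and on $\nabla_h\partial_3^2u_\varepsilon$ in $L^2((0,T_0),L^2)$. The one delicate point is the nonlinear term: one must exploit the divergence-free structure so that the single factor not controlled by the hypotheses, $\partial_3^2u_{\varepsilon,h}$, always enters linearly and is paired either with an absorbable $\|\nabla_h\partial_3^2u_\varepsilon\|_{L^2}$ or with norms that are integrable in time, and never quadratically against a coefficient that is not integrable.
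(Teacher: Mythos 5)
Your proposal is correct and follows essentially the same route as the paper: differentiate the equation twice in $x_3$, test with $\partial_3^2 u_{\varepsilon}$, drop the pressure and $\varepsilon$-terms, expand the nonlinearity by Leibniz with the transport part vanishing, rewrite $\partial_3 u_{\varepsilon,3}$ and $\partial_3^2 u_{\varepsilon,3}$ via $\div_h u_{\varepsilon,h}$, estimate with Lemma \ref{LestLpLq} and Young, and conclude by Gronwall. The intermediate bounds you display (including the affine term producing the mixed power $\nu_h\|\partial_3 u_{\varepsilon}\|_{L^2}$) coincide with the paper's estimates \eqref{H02aux3}--\eqref{H02aux7}.
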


\begin{proof}
Since $u_{\varepsilon}(t)$ is a very regular solution for $t>0$, all 
the a priori estimates
made below are justified. Differentiating twice the first equation in
$(NS_{\varepsilon})$ with respect to $x_3$ and taking the inner product in
$L^2(\tilde{Q})$ of the resulting equation with $\partial_3^2
u_{\varepsilon}$, we obtain the following equality, for $0 \leq t
\leq T_0$,
\begin{equation*}
\begin{split}
\frac{1}{2} \partial_t \|\partial_3^2 u_{\varepsilon}\|_{L^{2}}^{2}
-\nu_h (\Delta_h \partial_3^2 u_{\varepsilon}, \partial_3^2
u_{\varepsilon}) - \varepsilon (\partial_3^4
u_{\varepsilon},\partial_3^2 u_{\varepsilon}) = -( &\partial_3^2 \nabla
p_{\varepsilon},\partial_3^2 u_{\varepsilon}) \cr
-& (\partial_3^2
(u_{\varepsilon} \cdot \nabla u_{\varepsilon}), \partial_3^2
u_{\varepsilon}) .
\end{split}
\end{equation*}
Since $\partial_3^2 u_{\varepsilon}$ vanishes on $\partial \Omega
\times (-1,1)$ and is periodic in the variable $x_3$, the following
equalities hold:
\begin{equation*}
\begin{split}
-\int_{\tilde{Q}}  \Delta_h \partial_3^2 u_{\varepsilon} \cdot
\partial_3^2 u_{\varepsilon} dx_h dx_3 = &
\int_{\tilde{Q}}|\nabla_h\partial_3^2 u_{\varepsilon}|^2
dx_h dx_3 \cr
- \int_{\tilde{Q}} \partial_3^4 u_{\varepsilon} \cdot \partial_3 ^2
u_{\varepsilon} dx_h dx_3 = & \int_{\tilde{Q}} (\partial_3^3
u_{\varepsilon})^2dx_h dx_3,
\end{split}
\end{equation*}
and
\begin{equation*}
\begin{split}
-\int_{\tilde{Q}} \nabla \partial_3^2 p_{\varepsilon}  \partial_3^2
u_{\varepsilon} dx_h dx_3 = & \int_{\tilde{Q}} \partial_3^2 p_{\varepsilon}
\div \partial_3^2 u_{\varepsilon} dx_h dx_3 -  \int_{\partial
\tilde{Q}}\partial_3^2 p_{\varepsilon} (\partial_3^2 u_{\varepsilon} \cdot
n) d\sigma \cr
=& 0 ~.
\end{split}
\end{equation*}
We deduce from the above equalities that, for $0 \leq t
\leq T_0$,
\begin{equation}
\label{H02aux1}
\begin{split}
\frac{1}{2} \partial_t \|\partial_3^2 u_{\varepsilon}\|_{L^{2}}^{2}
+\nu_h \|\nabla_h \partial_3^2 &u_{\varepsilon}\|_{L^{2}}^{2}
  + \varepsilon \|\partial_3^3 u_{\varepsilon}\|_{L^{2}}^{2} \cr
= &- (\partial_3^2
u_{\varepsilon} \cdot \nabla u_{\varepsilon}, \partial_3^2
u_{\varepsilon})
  - 2 (\partial_3
u_{\varepsilon} \cdot \nabla \partial_3u_{\varepsilon}, \partial_3^2
u_{\varepsilon}) .
\end{split}
\end{equation}
Like in the proof of Lemma \ref{Lunablav}, using the divergence-free
condition (see \eqref{uvv1}), we
decompose the terms in the right hand side of \eqref{H02aux1} as
follows:
\begin{equation}
\label{H02aux2}
\begin{split}
&(\partial_3^2 u_{\varepsilon} \cdot \nabla u_{\varepsilon}, \partial_3^2
u_{\varepsilon}) = (\partial_3^2 u_{\varepsilon,h} \cdot \nabla_h 
u_{\varepsilon}, \partial_3^2
u_{\varepsilon}) -(\partial_3 \div_h u_{\varepsilon,h} \partial_3 
u_{\varepsilon}, \partial_3^2
u_{\varepsilon}) \cr
&(\partial_3 u_{\varepsilon} \cdot \nabla \partial_3u_{\varepsilon}, 
\partial_3^2
u_{\varepsilon}) = (\partial_3 u_{\varepsilon,h} \cdot \nabla_h
\partial_3 u_{\varepsilon}, \partial_3^2 u_{\varepsilon}) -
(\div_h u_{\varepsilon}  \partial_3^2 u_{\varepsilon}, \partial_3^2 
u_{\varepsilon}) .
\end{split}
\end{equation}
Arguing as in the inequality \eqref{uvv3} and applying Lemma
\ref{LestLpLq}, we obtain the estimate
\begin{equation}
\label{H02aux3}
\begin{split}
|(\partial_3^2 u_{\varepsilon,h} \cdot \nabla_h u_{\varepsilon}, \partial_3^2
u_{\varepsilon})| \leq & C \|\partial_3^2
u_{\varepsilon,h}\|_{L^2_v(L^4_h)}^2 \|\nabla_h
u_{\varepsilon}\|_{L^{\infty}_v(L^2_h)}\cr
\leq & C \|\partial_3^2 u_{\varepsilon}\|_{L^{2}} \|\nabla_h
\partial_3^2u_{\varepsilon}\|_{L^{2}} \|\nabla_h
u_{\varepsilon}\|_{H^{0,1}}\cr
\leq&\frac{C}{\nu_h} \|\partial_3^2 u_{\varepsilon}\|_{L^{2}}^2
\|\nabla_h u_{\varepsilon}\|_{H^{0,1}}^2 + \frac{\nu_h}{8}
\|\nabla_h \partial_3^2u_{\varepsilon}\|_{L^{2}}^2 .
\end{split}
\end{equation}
In the same way, we have the estimate
\begin{equation}
\label{H02aux4}
2|(\div_h u_{\varepsilon}  \partial_3^2 u_{\varepsilon}, \partial_3^2 
u_{\varepsilon})| \leq
\frac{C}{\nu_h} \|\partial_3^2 u_{\varepsilon}\|_{L^{2}}^2
\|\nabla_h u_{\varepsilon}\|_{H^{0,1}}^2 + \frac{\nu_h}{8}
\|\nabla_h \partial_3^2u_{\varepsilon}\|_{L^{2}}^2 .
\end{equation}
In order to estimate the term $|(\partial_3 \div_h u_{\varepsilon,h}
\partial_3 u_{\varepsilon}, \partial_3^2 u_{\varepsilon})|$, we
proceed like in \eqref{uvv2}, by applying Lemma \ref{LestLpLq}. We
thus get,
\begin{equation*}
\begin{split}
|(\partial_3 \div_h u_{\varepsilon,h}
\partial_3 u_{\varepsilon}, \partial_3^2 u_{\varepsilon})| \leq
& C \|\partial_3 \nabla_h u_{\varepsilon}\|_{L^{\infty}_v(L^{2}_h)}
  \|\partial_3 u_{\varepsilon}\|_{L^2_v(L^4_h)}
  \|\partial_3^2 u_{\varepsilon}\|_{L^2_v(L^4_h)}\cr
\leq &C \|\partial_3 u_{\varepsilon}\|_{L^{2}}^{1/2} \|\nabla_h
\partial_3 u_{\varepsilon}\|_{L^{2}}^{1/2}
\|\partial_3^2 u_{\varepsilon}\|_{L^{2}}^{1/2} \|\nabla_h
\partial_3^2 u_{\varepsilon}\|_{L^{2}}^{1/2} \cr
& \times \big( \|\partial_3 \nabla_h u_{\varepsilon}\|_{L^{2}} +
\|\partial_3 \nabla_h u_{\varepsilon}\|_{L^{2}}^{1/2}
\|\partial_3^2 \nabla_h u_{\varepsilon}\|_{L^{2}}^{1/2}\big) \cr
\leq &C \|\partial_3 u_{\varepsilon}\|_{L^{2}}^{1/2} \|\nabla_h
\partial_3 u_{\varepsilon}\|_{L^{2}}^{3/2}
\|\partial_3^2 u_{\varepsilon}\|_{L^{2}}^{1/2} \|\nabla_h
\partial_3^2 u_{\varepsilon}\|_{L^{2}}^{1/2} \cr
& +\|\partial_3 u_{\varepsilon}\|_{L^{2}}^{1/2} \|\nabla_h
\partial_3 u_{\varepsilon}\|_{L^{2}}
\|\partial_3^2 u_{\varepsilon}\|_{L^{2}}^{1/2} \|\nabla_h
\partial_3^2 u_{\varepsilon}\|_{L^{2}} .
\end{split}
\end{equation*}
Applying the Young inequalities $2ab \leq a^2 + b^2$ and $ab
\leq (1/4)a^4 +(3/4) b^{4/3}$ to the previous estimates
we obtain,
\begin{equation}
\label{H02aux5}
\begin{split}
|(\partial_3 \div_h u_{\varepsilon,h}
\partial_3 u_{\varepsilon}, &\partial_3^2 u_{\varepsilon})| \cr
\leq &\frac{\nu_h}{8} \|\nabla_h \partial_3^2u_{\varepsilon}\|_{L^{2}}^2 +
\frac{C}{\nu_h} \|\nabla_h
\partial_3 u_{\varepsilon}\|_{L^{2}}^2 \|\partial_3
u_{\varepsilon}\|_{L^{2}}
\|\partial_3^2 u_{\varepsilon}\|_{L^{2}} \cr
&+\frac{C}{\nu_h^{1/3}} \|\nabla_h
\partial_3 u_{\varepsilon}\|_{L^{2}}^2 \|\partial_3
u_{\varepsilon}\|_{L^{2}}^{2/3}
\|\partial_3^2 u_{\varepsilon}\|_{L^{2}}^{2/3}  .
\end{split}
\end{equation}
In the same way, we prove that
\begin{equation}
\label{H02aux6}
\begin{split}
2|(\partial_3 u_{\varepsilon,h} \cdot \nabla_h
\partial_3 u_{\varepsilon}, &\partial_3^2 u_{\varepsilon})| \cr
\leq &
\frac{\nu_h}{8} \|\nabla_h \partial_3^2u_{\varepsilon}\|_{L^{2}}^2 +
\frac{C}{\nu_h} \|\nabla_h
\partial_3 u_{\varepsilon}\|_{L^{2}}^2 \|\partial_3
u_{\varepsilon}\|_{L^{2}}
\|\partial_3^2 u_{\varepsilon}\|_{L^{2}} \cr
&+\frac{C}{\nu_h^{1/3}} \|\nabla_h
\partial_3 u_{\varepsilon}\|_{L^{2}}^2 \|\partial_3
u_{\varepsilon}\|_{L^{2}}^{2/3}
\|\partial_3^2 u_{\varepsilon}\|_{L^{2}}^{2/3}  .
\end{split}
\end{equation}
The equalities \eqref{H02aux1} and \eqref{H02aux2} as well as the
inequalities \eqref{H02aux3} to \eqref{H02aux6} imply that, for $0 \leq t
\leq T_0$,
\begin{equation}
\label{H02aux7}
\begin{split}
\partial_t \|\partial_3^2 u_{\varepsilon}\|_{L^{2}}^{2}
+ &\nu_h \|\nabla_h \partial_3^2 u_{\varepsilon}\|_{L^{2}}^{2}
  + 2 \varepsilon \|\partial_3^3 u_{\varepsilon}\|_{L^{2}}^{2}\cr
  \leq  &\frac{C}{\nu_h}
  \|\nabla_h u_{\varepsilon}\|_{H^{0,1}}^2 \Big(
\|\partial_3^2 u_{\varepsilon}\|_{L^{2}}^2 + \|\partial_3
u_{\varepsilon}\|_{L^{2}}^2 +  \nu_h
\|\partial_3 u_{\varepsilon}\|_{L^{2}} \Big) .
\end{split}
\end{equation}
Integrating the inequality \eqref{H02aux7} from $0$ to $t$, we
obtain, for $0 \leq t \leq T_0$,
\begin{equation*}
\begin{split}
\|\partial_3^2 u_{\varepsilon}(t)&\|_{L^{2}}^{2}
+ \nu_h \int_{0}^{t}\|\nabla_h \partial_3^2
u_{\varepsilon}(s)\|_{L^{2}}^{2} ds
  + 2 \varepsilon\int_{0}^{t}\|\partial_3^3
  u_{\varepsilon}(s)\|_{L^{2}}^{2} ds \cr
  \leq  & \|\partial_3^2 u_{\varepsilon,0}\|_{L^{2}}^{2} +
  \frac{C}{\nu_h} \int_{0}^{t} \|\nabla_h u_{\varepsilon}(s)\|_{H^{0,1}}^2
\|\partial_3^2 u_{\varepsilon}(s)\|_{L^{2}}^2 ds \cr
& +\frac{C}{\nu_h}
\sup_{0\leq s \leq T_0}\big(\|\partial_3 u_{\varepsilon}(s)\|_{L^2}^2 +  \nu_h
\|\partial_3 u_{\varepsilon}(s)\|_{L^2}
\big)  \int_{0}^{T_0} \|\nabla_h u_{\varepsilon}(s)\|_{H^{0,1}}^2 ds  .
\end{split}
\end{equation*}
Applying the Gronwall lemma, we deduce from the previous inequality
that, for $0 \leq t \leq T_0$,
\begin{equation}
\label{H02aux8}
\begin{split}
\|\partial_3^2 &u_{\varepsilon}(t) \|_{L^{2}}^{2}
+ \nu_h \int_{0}^{t}\|\nabla_h \partial_3^2
u_{\varepsilon}(s)\|_{L^{2}}^{2} ds
  + 2 \varepsilon\int_{0}^{t}\|\partial_3^3
  u_{\varepsilon}(s)\|_{L^{2}}^{2} ds \cr
& \leq  \, \big[\exp (\frac{C}{\nu_h} \int_{0}^{T_0} \|\nabla_h
u_{\varepsilon}(s)\|_{H^{0,1}}^2 ds)\big]
  \Big(\|\partial_3^2 u_{\varepsilon,0}\|_{L^{2}}^{2} \cr
& \quad \quad +\frac{C}{\nu_h}
\sup_{0\leq s \leq T_0}\big(\|\partial_3 u_{\varepsilon}(s)\|_{L^2}^2 +  \nu_h
\|\partial_3 u_{\varepsilon}(s)\|_{L^2}
\big)  \int_{0}^{T_0} \|\nabla_h u_{\varepsilon}(s)\|_{H^{0,1}}^2 ds
\Big) .
\end{split}
\end{equation}
The proposition is thus proved.
\end{proof}

The propositions \ref{Cauchy} and \ref{propH02} together with Remark
\ref{H02} imply the following $H^{0,2}$-propagation result.

\begin{corollary} \label{propageH02}
Let $u_0 \in \widetilde{H}^{0,1}(Q) \cap \widetilde{H}^{0,2}_0(Q)$ be 
given.  Let
$\varepsilon_m >0 $ be a
(decreasing) sequence converging to $zero$ and
$u_0^m \in {\tilde H}^1_0(Q) \cap H^2(Q) \cap 
\widetilde{H}^{0,2}_0(Q)$ be a sequence of initial data converging
to $u_0$ in $\widetilde{H}^{0,2}(Q)$, when $m$ goes to infinity. 
Assume that the
system $(NS_{\varepsilon_m})$, with initial data $\Sigma u_0^m$, has 
a strong solution
$u_{\varepsilon_m}(t) \in C^0((0,T_0),\widetilde{V})$ where $T_0$ does
not depend on $\varepsilon_m$ and that the sequences $u_{\varepsilon_m}(t)$
and $\nabla_h u_{\varepsilon_m}(t)$ are uniformly bounded in
$L^{\infty}((0, T_0),H^{0,1}(\tilde{Q}))$ and
$L^{2}((0, T_0),H^{0,1}(\tilde{Q}))$ respectively. Then,
the sequence $u_{\varepsilon_m}(t)$ converges
in $L^{\infty}((0,T_0), L^2(\tilde{Q})^3) \cap L^2((0, T_0),
H^{1,0}(\tilde{Q}))$ to a solution $u^*\in L^{\infty}((0,T_0),$ $
H^{0,2}(\tilde{Q}))$ of the problem $(NS_h)$, such that
  $\nabla_h \partial_3^i u^*$ belongs to
$L^{2}((0, T_0), L^2(\tilde{Q})^3)$, for $i=0,1,2$. Moreover, the
solution
$u^*$ belongs to $L^{\infty}((0,T_0),\widetilde{H}^{0,2}_0(Q))$.

\end{corollary}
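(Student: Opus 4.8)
The plan is to assemble the three ingredients already at hand: Remark~\ref{H02} provides the approximating sequence $u_0^m$, Proposition~\ref{Cauchy} provides the limit $u^*$ together with its convergence and its $H^{0,1}$-regularity, and Proposition~\ref{propH02}, passed to the limit, provides the extra vertical regularity.

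First I would check that Proposition~\ref{propH02} is applicable to each $u_{\varepsilon_m}$, i.e.\ that $\Sigma u_0^m \in H^2(\tilde Q)^3 \cap \tilde V$. Since $u_0^m \in {\tilde H}^1_0(Q)$ we have $u^m_{0,3} = 0$ on $\Gamma_0 \cup \Gamma_1$, and since $u_0^m \in \widetilde{H}^{0,2}_0(Q)$ we have $\partial_3 u^m_{0,h} = 0$ on $\Gamma_0 \cup \Gamma_1$. These are exactly the compatibility conditions---at the interface $x_3 = 0$ and, because of the $x_3$-periodicity, at $x_3 = 1$---under which the even reflection of $u^m_{0,h}$ and the odd reflection of $u^m_{0,3}$ (see \eqref{symetrie}) produce a vector field $\Sigma u_0^m$ of class $H^2$ on $\tilde Q$ that is still divergence-free and $2$-periodic in $x_3$; hence $\Sigma u_0^m \in H^2(\tilde Q)^3 \cap \tilde V$. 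This reflection bookkeeping is the only genuinely new point of the argument, and it is, I expect, the step requiring the most care, since it is precisely where the hypothesis $u_0 \in \widetilde{H}^{0,2}_0(Q)$ (and the matching condition on the $u_0^m$) enters.

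Next, Proposition~\ref{Cauchy} yields at once that $u_{\varepsilon_m} \to u^*$ in $L^{\infty}((0,T_0), L^2(\tilde Q)^3) \cap L^2((0,T_0), H^{1,0}(\tilde Q))$, that the restriction of $u^*$ to $Q$ solves $(NS_h)$, that $u^* \in L^{\infty}((0,T_0), H^{0,1}(\tilde Q))$ with $\nabla_h u^* \in L^2((0,T_0), H^{0,1}(\tilde Q))$, and that $u^*(t)$ is symmetric, $S u^*(t) = u^*(t)$, so that $u^*_3(t) = 0$ on $\Gamma_0 \cup \Gamma_1$ and $u^*(t)$ satisfies the lateral Dirichlet condition for almost every $t$. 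On the other hand, the right-hand side of \eqref{estimH02} is bounded uniformly in $m$: the quantity $\|\partial_3^2 \Sigma u_0^m\|_{L^2}$ is bounded because $u_0^m \to u_0$ in $\widetilde{H}^{0,2}(Q)$, while $\sup_{0 \le s \le T_0} \|\partial_3 u_{\varepsilon_m}(s)\|_{L^2}$ and $\int_0^{T_0} \|\nabla_h u_{\varepsilon_m}(s)\|_{H^{0,1}}^2\, ds$ are controlled by the uniform bounds assumed in the statement. Thus, by Proposition~\ref{propH02}, $\partial_3^2 u_{\varepsilon_m}$ is bounded in $L^{\infty}((0,T_0), L^2(\tilde Q)^3)$ and $\nabla_h \partial_3^2 u_{\varepsilon_m}$ in $L^2((0,T_0), L^2(\tilde Q)^3)$, uniformly in $m$.

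Finally I would pass to the limit. By weak-$*$, respectively weak, compactness, $\partial_3^2 u_{\varepsilon_m}$ converges along a subsequence weakly-$*$ in $L^{\infty}((0,T_0), L^2(\tilde Q)^3)$ and $\nabla_h \partial_3^2 u_{\varepsilon_m}$ weakly in $L^2((0,T_0), L^2(\tilde Q)^3)$; by uniqueness of limits in the sense of distributions, together with the convergence $u_{\varepsilon_m} \to u^*$ already available, these limits are $\partial_3^2 u^*$ and $\nabla_h \partial_3^2 u^*$. Combined with the $H^{0,1}$ information obtained above, this gives $u^* \in L^{\infty}((0,T_0), H^{0,2}(\tilde Q))$ and $\nabla_h \partial_3^i u^* \in L^2((0,T_0), L^2(\tilde Q)^3)$ for $i = 0,1,2$. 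To identify $u^*|_Q$ as an element of $L^{\infty}((0,T_0), \widetilde{H}^{0,2}_0(Q))$ it only remains to verify that $\partial_3 u^*_h(t) = 0$ on $\Gamma_0 \cup \Gamma_1$, the other defining properties---divergence-freeness, $\gamma_n u^* = 0$ on $\partial Q$ (from the lateral Dirichlet condition and $u^*_3 = 0$ on $\Gamma_0 \cup \Gamma_1$), and $\partial_3^i u^* \in L^2(Q)^3$---being already known. For almost every $t$, $u^*(t) \in H^{0,2}(\tilde Q)$, so $\partial_3 u^*(t)$ and $\partial_3^2 u^*(t)$ belong to $L^2(\tilde Q)$; by Lemma~\ref{LestLpLq} the map $x_3 \mapsto \partial_3 u^*(t)(\cdot, x_3)$ is then continuous with values in $L^2(\Omega)$, and, $u^*_h(t)$ being even in $x_3$, its derivative $\partial_3 u^*_h(t)$ is odd and continuous in $x_3$, hence vanishes at $x_3 = 0$; the $x_3$-periodicity then forces $\partial_3 u^*_h(t) = 0$ at $x_3 = 1$ too. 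This proves the corollary.
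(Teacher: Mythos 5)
Your proposal is correct and follows essentially the same route as the paper: density from Remark~\ref{H02}, the reflection compatibility conditions ($u^m_{0,3}=0$ and $\partial_3 u^m_{0,h}=0$ on $\Gamma_0\cup\Gamma_1$) giving $\Sigma u_0^m\in H^2(\tilde Q)^3\cap\tilde V$, Proposition~\ref{Cauchy} for the limit $u^*$, and Proposition~\ref{propH02} for the uniform $\partial_3^2$ bounds passed to the limit by weak compactness. You in fact spell out two points the paper leaves implicit --- the uniform bound on the right-hand side of \eqref{estimH02} and the verification via the symmetry $Su^*=u^*$ that $\partial_3 u^*_h$ vanishes on $\Gamma_0\cup\Gamma_1$, so that $u^*$ lands in $\widetilde{H}^{0,2}_0(Q)$ --- and both arguments are sound.
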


\begin{proof} Let $u_0 \in \widetilde{H}^{0,1}(Q) \cap 
\widetilde{H}^{0,2}_0(Q)$ be given.  We
notice that, by Remark \ref{H02}, there exists a sequence $u_0^m \in
{\tilde H}^1_0(Q) \cap H^2(Q) \cap \widetilde{H}^{0,2}_0(Q)$ of
initial data converging to $u_0$ in $\widetilde{H}^{0,2}(Q)$, when $m$ goes to
infinity. Let $u_0^m$ be such a sequence. As we have remarked in the
introduction, $\Sigma u_0^m$ belongs to $H^2(\tilde{Q})^3$ and
$\partial_3 u_{0,h}^m$ vanishes on $\Gamma_0 \cup \Gamma_1$. By Proposition
\ref{propH02}, the classical solution $u_{\varepsilon_m}$ of
$(NS_{\varepsilon_m})$ is more regular in the sense that
$\partial_3^2 u_{\varepsilon_m}$ (respectively
$\nabla_h \partial_3^2 u_{\varepsilon_m}$)
is uniformly bounded in $L^{\infty}((0, T_0),
L^2(\tilde{Q})^3)$ (respectively in $L^{2}((0, T_0),
L^2(\tilde{Q})^3)$. Thus the limit $\partial_3^2  u^*$ belongs to
$L^{\infty}((0, T_0), L^2(\tilde{Q})^3)$ and
$\nabla_h \partial_3^2 u^*$ belongs to
$L^{2}((0, T_0), L^2(\tilde{Q})^3)$.
\end{proof}

\section{Global existence results for small initial data}

  We begin with the simplest result.

  \begin{theorem} \label{Global1}
  There exists a positive constant $c_0$ such that, if $u_0$ belongs
  to $\widetilde{H}^{0,1}(Q)$ and $\|u_0\|_{H^{0,1}}\leq
c_0\nu_h$, then the system $(NS_h)$ admits a (unique) global solution
$u(t)$, with $u(0)=u_0$, such that
$$
   u \in L^\infty(\R_+,\widetilde{H}^{0,1}(Q)) ~\quad \hbox{ and }\quad
\partial_3 \nabla_h u \in  L^2(\R_+, L^2(Q)^3).
$$
  \end{theorem}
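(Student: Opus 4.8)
The plan is to implement the approximation scheme described in the introduction and to derive \emph{uniform-in-$m$} global bounds. Fix a small constant $c_0>0$, to be specified at the end, and suppose $\|u_0\|_{H^{0,1}}\leq c_0\nu_h$. I would pick a decreasing sequence $\varepsilon_m\to 0$ and, using Lemma \ref{dense}, a sequence $u_0^m\in{\tilde H}^1_0(Q)$ with $u_0^m\to u_0$ in $\widetilde{H}^{0,1}(Q)$; discarding finitely many indices we may assume $\|u_0^m\|_{H^{0,1}}$ is small as well. For each $m$, the Fujita--Kato theorem provides a local classical solution $u_{\varepsilon_m}\in C^0([0,T_{\varepsilon_m}),\tilde{V})$ of $(NS_{\varepsilon_m})$ with initial datum $\Sigma u_0^m$, and by the symmetry argument of the introduction $u_{\varepsilon_m}=Su_{\varepsilon_m}$, so that $\partial_3 u_{\varepsilon_m}$ and $\partial_3^2 u_{\varepsilon_m}$ vanish on $\partial\Omega\times(-1,1)$ and are periodic in $x_3$.

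The key step is an a priori estimate in $H^{0,1}(\tilde{Q})$. I would take the $L^2(\tilde{Q})^3$ inner product of $(NS_{\varepsilon_m})$ with $u_{\varepsilon_m}$ (as in Lemma \ref{energie}) and of the equation differentiated once in $x_3$ with $\partial_3 u_{\varepsilon_m}$. The periodicity in $x_3$ and the homogeneous lateral Dirichlet condition kill the pressure terms and give $-\nu_h(\Delta_h\partial_3 u_{\varepsilon_m},\partial_3 u_{\varepsilon_m})=\nu_h\|\nabla_h\partial_3 u_{\varepsilon_m}\|_{L^2}^2$ and $-\varepsilon_m(\partial_3^3 u_{\varepsilon_m},\partial_3 u_{\varepsilon_m})=\varepsilon_m\|\partial_3^2 u_{\varepsilon_m}\|_{L^2}^2$; since the $L^2$-level convection term vanishes (divergence-free condition plus boundary conditions), adding the two identities yields
\[
\tfrac12\partial_t\|u_{\varepsilon_m}\|_{H^{0,1}}^2+\nu_h\|\nabla_h u_{\varepsilon_m}\|_{H^{0,1}}^2+\varepsilon_m\|\partial_3 u_{\varepsilon_m}\|_{H^{0,1}}^2=-(u_{\varepsilon_m}\nabla u_{\varepsilon_m},u_{\varepsilon_m})_{H^{0,1}}.
\]
These manipulations are legitimate because $u_{\varepsilon_m}$ is classical for $t>0$. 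Lemma \ref{Lunablav} applied with $u=v=u_{\varepsilon_m}$ bounds the right-hand side by $2C_1\|u_{\varepsilon_m}\|_{H^{0,1}}\|\nabla_h u_{\varepsilon_m}\|_{H^{0,1}}^2$.

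Next I would run a continuation argument. Since $\|\Sigma u_0^m\|_{H^{0,1}(\tilde{Q})}^2=2\|u_0^m\|_{H^{0,1}(Q)}^2$, choosing $c_0$ small enough and $m$ large gives $\|u_{\varepsilon_m}(0)\|_{H^{0,1}(\tilde{Q})}<\nu_h/(4C_1)$. As long as $\|u_{\varepsilon_m}(t)\|_{H^{0,1}}\leq\nu_h/(4C_1)$, the nonlinear term is absorbed by half the horizontal dissipation, so $\partial_t\|u_{\varepsilon_m}\|_{H^{0,1}}^2+\nu_h\|\nabla_h u_{\varepsilon_m}\|_{H^{0,1}}^2+2\varepsilon_m\|\partial_3 u_{\varepsilon_m}\|_{H^{0,1}}^2\leq 0$; hence $\|u_{\varepsilon_m}(t)\|_{H^{0,1}}$ is nonincreasing, the bound is self-improving, and a standard continuity argument shows it holds throughout $[0,T_{\varepsilon_m})$. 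Integrating in time gives, uniformly in $m$ and $t$,
\[
\|u_{\varepsilon_m}(t)\|_{H^{0,1}}^2+\nu_h\int_0^t\|\nabla_h u_{\varepsilon_m}(s)\|_{H^{0,1}}^2\,ds+2\varepsilon_m\int_0^t\|\partial_3 u_{\varepsilon_m}(s)\|_{H^{0,1}}^2\,ds\leq\|\Sigma u_0^m\|_{H^{0,1}(\tilde{Q})}^2 .
\]
Using the horizontal Poincar\'e inequality $\|v\|_{H^{0,1}}^2\leq\lambda_0\|\nabla_h v\|_{H^{0,1}}^2$ (valid because $u_{\varepsilon_m}$ and $\partial_3 u_{\varepsilon_m}$ vanish on the lateral boundary), $u_{\varepsilon_m}$ is also uniformly bounded in $L^2_{loc}(\R_+,H^{0,1}(\tilde{Q}))$; together with Proposition \ref{gradh} and the blow-up alternative \eqref{explose}, this forces $T_{\varepsilon_m}=+\infty$, so all the above bounds hold for every $t\geq 0$.

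Finally I would pass to the limit. For any fixed $T_0<\infty$ the uniform bounds just obtained are precisely the hypotheses of Proposition \ref{Cauchy}, which gives convergence of $u_{\varepsilon_m}$ in $L^\infty((0,T_0),L^2(\tilde{Q})^3)\cap L^2((0,T_0),H^{1,0}(\tilde{Q}))$ to a limit $u^*$ whose restriction to $Q$ solves $(NS_h)$, lies in $L^\infty((0,T_0),\widetilde{H}^{0,1}(Q))$, and satisfies $\nabla_h u^*\in L^2((0,T_0),H^{0,1}(Q))$. Since limits in these spaces are unique, letting $T_0\to+\infty$ produces a global solution, and lower semicontinuity of the norms under the (weak) convergence transfers the uniform bounds to $\R_+$: $u^*\in L^\infty(\R_+,\widetilde{H}^{0,1}(Q))$ and $\int_0^\infty\|\nabla_h u^*\|_{H^{0,1}(Q)}^2<\infty$, in particular $\partial_3\nabla_h u^*\in L^2(\R_+,L^2(Q)^3)$. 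Uniqueness then follows from part 2) of Proposition \ref{Cauchy} applied on each finite time interval. I expect the main obstacle to be the $H^{0,1}$ a priori estimate itself: one must carefully justify the $x_3$-differentiation and the integrations by parts using the Fujita--Kato regularity and the periodic/lateral-Dirichlet structure of the symmetrized problem, and the closure of the convection term genuinely relies on the divergence-free identity $u_3\,\partial_3 u_3=-u_3\,\div_h u_h$ encapsulated in Lemma \ref{Lunablav}.
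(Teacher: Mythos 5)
Your proposal is correct and follows essentially the same route as the paper: the same reduction to uniform bounds for the regularized problems $(NS_{\varepsilon_m})$, the same $H^{0,1}$ energy estimate obtained by differentiating in $x_3$ and invoking Lemma \ref{Lunablav}, the same smallness/continuity bootstrap, Proposition \ref{gradh} for globality of $u_{\varepsilon_m}$, and Proposition \ref{Cauchy} for the passage to the limit and uniqueness. Only the numerical constants in the smallness threshold differ, which is immaterial.
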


  \begin{proof}
According to the strategy explained in the introduction and according to
Proposition \ref{Cauchy}, it is sufficient to prove that
there exists a positive constant $c_1$ such that if
$u_{\varepsilon}(0) =w_0$ belongs to
  $H^1_{0,per}(\tilde{Q})$ and satisfies
  $$
\|w_0\|_{H^{0,1}}\leq c_1\nu_h,
$$
then, for any $\varepsilon>0$, the equations $(NS_{\varepsilon})$ 
admit a unique global
solution $u_{\varepsilon}(t) \in C^0([0, +\infty), \tilde{V})$ with
$u_{\varepsilon}(0)=w_0$ and moreover,  $u_{\varepsilon}$ and
$\partial_3 \nabla_h u_{\varepsilon}$ are uniformly bounded (with respect to
$\varepsilon$) in $L^{\infty}((0,+\infty), H^{0,1}(\tilde{Q})$ and
$L^2((0, +\infty), L^2(\tilde{Q})^3)$.
\vskip 1mm
Let now $u_{\varepsilon}$ be the local solution of the equations
$(NS_{\varepsilon})$ with $u_{\varepsilon}(0)=w_0$. Since
$u_{\varepsilon}$ is a classical solution on the maximal interval of
existence, all the a priori estimates made below can be justified
rigorously. Differentiating the first equation in $(NS_{\varepsilon})$ with
respect to $x_3$ and taking the inner product in $L^2(\tilde{Q})^3$
of it with $\partial_3 u_{\varepsilon}$, we obtain, for $0 \leq t
\leq T_\varepsilon$, where $T_\varepsilon >0$ is the maximal time of
existence,
\begin{equation}
\label{equD3}
\begin{split}
\frac{1}{2} \partial_t \|\partial_3 u_{\varepsilon}\|_{L^2}^{2} - 
\nu_h (\Delta_h
\partial_3 u_{\varepsilon} &,\partial_3 u_{\varepsilon})
- \varepsilon (\partial_3^3 u_{\varepsilon},\partial_3
u_{\varepsilon}) \cr
& = -(\nabla \partial_3 p_{\varepsilon}, \partial_3
u_{\varepsilon}) -(\partial_3(u_{\varepsilon} \nabla u_{\varepsilon}),
\partial_3 u_{\varepsilon}).
\end{split}
\end{equation}
Since $u_{\varepsilon}$ and hence $\partial_3 u_{\varepsilon}$ vanish 
on the lateral boundary $\partial
\Omega \times (-1,1)$ and that $u_{\varepsilon}$ and
$p_{\varepsilon}$ are periodic in the
vertical variable, we have,
\begin{equation}
\label{globaux1}
\begin{split}
-\int_{\tilde{Q}}  \Delta_h \partial_3 u_{\varepsilon} \cdot
\partial_3 u_{\varepsilon} dx_h dx_3 = & 
\int_{\tilde{Q}}|\nabla_h\partial_3 u_{\varepsilon}|^2
dx_h dx_3 \cr
- \int_{\tilde{Q}} \partial_3^3 u_{\varepsilon} \cdot \partial_3
u_{\varepsilon} dx_h dx_3 = & \int_{\tilde{Q}} (\partial_3^2
u_{\varepsilon})^2dx_h dx_3,
\end{split}
\end{equation}
and
\begin{equation}
\label{globaux2}
\begin{split}
-\int_{\tilde{Q}} \nabla \partial_3 p_{\varepsilon}  \partial_3
u_{\varepsilon} dx_h dx_3 = & \int_{\tilde{Q}} \partial_3 p_{\varepsilon}
\div \partial_3 u_{\varepsilon} dx_h dx_3 -  \int_{\partial
\tilde{Q}}\partial_3 p_{\varepsilon} (\partial_3 u_{\varepsilon} \cdot
n) d\sigma \cr
=& 0 ~.
\end{split}
\end{equation}
The equalities \eqref{equD3}, \eqref{globaux1}, and \eqref{globaux2}
together with Lemma \ref{Lunablav} imply that, for $0 \leq t \leq
T_\varepsilon$,
\begin{equation}
\label{energie2aux}
\partial_t \|\partial_3 u_{\varepsilon}\|_{L^2}^{2}
+ 2 \nu_h \|\nabla_h\partial_3 u_{\varepsilon}|\|_{L^2}^{2}
+ 2\varepsilon \| \partial_3^2 u_\varepsilon\|_{L^2}^{2} \leq  4 C_1
\|u_{\varepsilon}\|_{H^{0,1}} \| \nabla_h
u_{\varepsilon}\|_{H^{0,1}}^2
\end{equation}
We deduce from the estimates \eqref{enaux1} and \eqref{energie2aux}
that, for
$0 \leq t \leq T_\varepsilon$,
\begin{equation}
\label{energie3aux}
\begin{split}
\partial_t \big(\|u_{\varepsilon}\|_{L^2}^{2}  +\|\partial_3
u_{\varepsilon}\|_{L^2}^{2} \big) + 2 \nu_h \big( \|\nabla_h 
u_{\varepsilon}|\|_{L^2}^{2}
&+ \|\nabla_h\partial_3 u_{\varepsilon}|\|_{L^2}^{2}\big)  \cr
&+ 2 \varepsilon
\big(\| \partial_3 u_\varepsilon\|_{L^2}^{2} + \| \partial_3^2
u_\varepsilon\|_{L^2}^{2} \big) \cr
\leq 8 C_1 \|u_{\varepsilon}\|_{H^{0,1}}& \big(  \|\nabla_h 
u_{\varepsilon}|\|_{L^2}^{2}
+ \|\nabla_h\partial_3 u_{\varepsilon}|\|_{L^2}^{2}\big).
\end{split}
\end{equation}
Suppose now that the initial data $u_{\varepsilon}(0)=w_0$ are small enough
in the sense that
\begin{equation}
\label{w0petit}
\|w_0\|_{H^{0,1}} \leq \frac{\nu_h}{32 C_1}
\end{equation}
Then, by continuity, there exists a time interval
$[0,\tau_{\varepsilon})$ such that, for $t \in
[0,\tau_{\varepsilon})$, $\|u_{\varepsilon}(t)\|_{H^{0,1}} < \nu_h
/ (8 C_1)$. If $\tau_{\varepsilon} < T_{\varepsilon}$, then
$\|u_{\varepsilon}(\tau_{\varepsilon})\|_{H^{0,1}} = \nu_h
/ (8 C_1)$. Assume now that $\tau_{\varepsilon} < T_{\varepsilon}$.
If $t$ belongs to the time interval $[0,\tau_{\varepsilon}]$, we
deduce from the inequality \eqref{energie3aux} that
\begin{equation}
\label{energie4aux}
\begin{split}
\partial_t \big(\|u_{\varepsilon}\|_{L^2}^{2}  +\|\partial_3
u_{\varepsilon}\|_{L^2}^{2} \big) + \nu_h \big( \|\nabla_h 
u_{\varepsilon}\|_{L^2}^{2}
&+ \|\nabla_h\partial_3 u_{\varepsilon}\|_{L^2}^{2}\big)  \cr
&+ 2 \varepsilon
\big(\| \partial_3 u_\varepsilon\|_{L^2}^{2} + \| \partial_3^2
u_\varepsilon\|_{L^2}^{2} \big)  \leq 0.
\end{split}
\end{equation}
Integrating the inequality \eqref{energie4aux} from $0$ to $t$,
we obtain that, for  $t \leq \tau_{\varepsilon}$,
\begin{equation}
\label{energie5aux}
\begin{split}
\|u_{\varepsilon}(t)\|_{L^2}^{2}  + &\|\partial_3
u_{\varepsilon}(t)\|_{L^2}^{2} +\nu_h \int_{0}^{t}
\big( \|\nabla_h u_{\varepsilon}(s)\|_{L^2}^{2}
+ \|\nabla_h\partial_3 u_{\varepsilon}(s)\|_{L^2}^{2}\big) ds \cr
&+ 2 \varepsilon \int_{0}^{t}
\big(\| \partial_3 u_\varepsilon (s)\|_{L^2}^{2} + \| \partial_3^2
u_\varepsilon (s)\|_{L^2}^{2} \big) ds \leq  \|w_0\|_{L^2}^{2}  +\|\partial_3
w_0\|_{L^2}^{2}
\end{split}
\end{equation}
The estimate \eqref{energie5aux} implies that, for $t \leq \tau_{\varepsilon}$,
$$
\|u_{\varepsilon}(t)\|_{H^{0,1}} \leq \frac{\nu_h}{16 C_1}.
$$
In particular, $\|u_{\varepsilon}(\tau_{\varepsilon})\|_{H^{0,1}}
\leq \nu_h / (16 C_1)$, which contradicts the definition of
$\tau_{\varepsilon}$. Thus $\tau_{\varepsilon} = T_{\varepsilon}$ and
one deduces from \eqref{energie5aux} that, for $0 \leq t
<T_{\varepsilon}$,
\begin{equation}
\label{energie5auxbis}
\begin{split}
\|u_{\varepsilon}(t)\|_{L^2}^{2}  + &\|\partial_3
u_{\varepsilon}(t)\|_{L^2}^{2} +\nu_h \int_{0}^{T_{\varepsilon}}
\big( \|\nabla_h u_{\varepsilon}(s)\|_{L^2}^{2}
+ \|\nabla_h\partial_3 u_{\varepsilon}(s)\|_{L^2}^{2}\big) ds \cr
&+ 2 \varepsilon \int_{0}^{T_{\varepsilon}}
\big(\| \partial_3 u_\varepsilon (s)\|_{L^2}^{2} + \| \partial_3^2
u_\varepsilon (s)\|_{L^2}^{2} \big) ds \leq  \|w_0\|_{L^2}^{2}  +\|\partial_3
w_0\|_{L^2}^{2}
\end{split}
\end{equation}
\vskip 1mm
To prove that $u_{\varepsilon}(t)$ exists globally, that is, that
$T_{\varepsilon}=+ \infty$, it remains to show that $\|\nabla_h
u_{\varepsilon}(t)\|_{L^2}$ is uniformly bounded with respect to $t
\in [0, T_{\varepsilon})$. But this property is a direct consequence
of Proposition \ref{gradh}. Theorem \ref{Global1} is thus proved.
\end{proof}
\vskip 3mm

A more careful analysis allows to prove the following global
existence result.

  \begin{theorem} \label{Global2}
  There exist positive constants $c_0$ and $c_0^*$ such that, if $u_0$ belongs
  to $\widetilde{H}^{0,1}(Q)$ and satisfies the following smallness condition
  $$
  \|\partial_3 u_0\|_{L^2(\Omega)}^{\frac
12}\|u_0\|_{L^2(\Omega)}^{\frac
12}\exp(\frac{c_0\|u_0\|_{L^2}^2}{\nu_h^2})\leq c_0^*\nu_h,
$$
  then the system $(NS_h)$ admits a (unique) global solution
$u(t)$, with $u(0)=u_0$, such that
$$
   u \in L^\infty(\R_+,\widetilde{H}^{0,1}(Q)) ~\quad \hbox{ and }\quad
\partial_3\nabla_h u \in  L^2(\R_+; L^2(Q)^3).
$$
  \end{theorem}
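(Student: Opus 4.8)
The plan is to follow the same strategy as in Theorem~\ref{Global1}: via Proposition~\ref{Cauchy} it suffices to obtain, for the regularized system $(NS_{\varepsilon})$ with initial data $w_0 = \Sigma u_0^m \in \tilde{V}$, a global solution together with bounds on $u_{\varepsilon}$ in $L^{\infty}(\R_+, H^{0,1}(\tilde{Q}))$ and on $\partial_3 \nabla_h u_{\varepsilon}$ in $L^2(\R_+, L^2(\tilde{Q})^3)$, uniform in $\varepsilon$. Here one must be careful: the smallness hypothesis of Theorem~\ref{Global2} is a condition on $u_0$, not directly on $\partial_3 u_0$ alone; the $L^2$-energy decays (Lemma~\ref{energie}), so we should expect a bootstrap in which $\|\partial_3 u_{\varepsilon}(t)\|_{L^2}$ is controlled by $\|\partial_3 w_0\|_{L^2}$ times a factor of the form $\exp(C \|w_0\|_{L^2}^2/\nu_h^2)$, coming from the integrated $L^2$-dissipation $\int_0^{\infty} \|\nabla_h u_{\varepsilon}\|_{L^2}^2\,ds \lesssim \nu_h^{-1}\|w_0\|_{L^2}^2$.

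The heart of the argument is the differential inequality \eqref{energie2aux} already derived in the proof of Theorem~\ref{Global1},
\begin{equation*}
\partial_t \|\partial_3 u_{\varepsilon}\|_{L^2}^{2} + 2\nu_h \|\nabla_h \partial_3 u_{\varepsilon}\|_{L^2}^2 + 2\varepsilon\|\partial_3^2 u_{\varepsilon}\|_{L^2}^2 \le 4 C_1 \|u_{\varepsilon}\|_{H^{0,1}}\|\nabla_h u_{\varepsilon}\|_{H^{0,1}}^2 .
\end{equation*}
Rather than absorbing the whole right-hand side (which forced the crude smallness $\|w_0\|_{H^{0,1}} \le \nu_h/(32C_1)$ before), I would split $\|u_\varepsilon\|_{H^{0,1}}^2 = \|u_\varepsilon\|_{L^2}^2 + \|\partial_3 u_\varepsilon\|_{L^2}^2$ and $\|\nabla_h u_\varepsilon\|_{H^{0,1}}^2 = \|\nabla_h u_\varepsilon\|_{L^2}^2 + \|\nabla_h \partial_3 u_\varepsilon\|_{L^2}^2$, and treat the factor $\|u_\varepsilon\|_{L^2}$ as a small, decaying coefficient while using Young's inequality on the genuinely quadratic-in-$\partial_3 u_\varepsilon$ terms. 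Concretely, denote $y(t) = \|\partial_3 u_\varepsilon(t)\|_{L^2}^2$ and $X(t) = \|u_\varepsilon(t)\|_{L^2}$. The terms with $\|u_\varepsilon\|_{L^2}$ in front produce, after Young, a contribution $\le \nu_h \|\nabla_h \partial_3 u_\varepsilon\|_{L^2}^2 + C\nu_h^{-1}X^2\|\nabla_h u_\varepsilon\|_{L^2}^2 \cdot(\text{bounded})$, absorbable into the dissipation; the terms with $\|\partial_3 u_\varepsilon\|_{L^2}$ in front give $\le C\,y\,(\|\nabla_h u_\varepsilon\|_{L^2}^2 + \|\nabla_h\partial_3 u_\varepsilon\|_{L^2}^2)$, whence after absorbing we get $\partial_t y + \nu_h\|\nabla_h\partial_3 u_\varepsilon\|_{L^2}^2 \le C\nu_h^{-1} y\,\|\nabla_h u_\varepsilon\|_{H^{0,1}}^2 + (\text{decaying source})$. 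The decaying source and the coefficient both involve $\int_0^t \|\nabla_h u_\varepsilon\|^2$, which by Lemma~\ref{energie} is $\le \tfrac{1}{2\nu_h}\|w_0\|_{L^2}^2$; so Gronwall yields $y(t) \le C\|\partial_3 w_0\|_{L^2}^2 \exp(C\nu_h^{-2}\|w_0\|_{L^2}^2)$ plus lower-order terms, and combined with the interpolation this is where the structure of the hypothesis $\|\partial_3 u_0\|_{L^2}^{1/2}\|u_0\|_{L^2}^{1/2}\exp(c_0\|u_0\|_{L^2}^2/\nu_h^2) \le c_0^*\nu_h$ appears.

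Once $\|\partial_3 u_\varepsilon(t)\|_{L^2}$ is shown to stay below the threshold $\nu_h/(8C_1)$ for all $t < T_\varepsilon$ (a continuity/bootstrap argument exactly as in Theorem~\ref{Global1}, but now only on the $\partial_3$-component since the $L^2$-norm is automatically controlled by Lemma~\ref{energie}), inequality \eqref{energie4aux} holds and integrating gives the uniform bounds on $u_\varepsilon$ in $L^\infty(\R_+, H^{0,1}(\tilde{Q}))$ and on $\nabla_h\partial_3 u_\varepsilon$ in $L^2(\R_+, L^2(\tilde{Q})^3)$. Global existence $T_\varepsilon = +\infty$ then follows from Proposition~\ref{gradh}, and passing to the limit $\varepsilon_m \to 0$ via Proposition~\ref{Cauchy} delivers the solution $u \in L^\infty(\R_+, \widetilde{H}^{0,1}(Q))$ with $\partial_3\nabla_h u \in L^2(\R_+, L^2(Q)^3)$; uniqueness is the second part of Proposition~\ref{Cauchy}. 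I expect the main obstacle to be the bookkeeping in the bootstrap: one must choose the smallness constants $c_0$ and $c_0^*$ so that the feedback term $C\nu_h^{-1}\int_0^t y\,\|\nabla_h u_\varepsilon\|_{H^{0,1}}^2$ does not ruin the argument, which requires knowing a priori that $\int_0^t \|\nabla_h \partial_3 u_\varepsilon\|_{L^2}^2\,ds$ is itself small — so the continuity argument has to be run simultaneously on $\|\partial_3 u_\varepsilon\|_{L^\infty_t L^2}$ and on this dissipation integral, closing the loop self-consistently.
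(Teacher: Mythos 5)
There is a genuine gap, and it sits exactly at the point that distinguishes Theorem \ref{Global2} from Theorem \ref{Global1}. You start from the crude inequality \eqref{energie2aux}, i.e.\ from Lemma \ref{Lunablav} applied with $u=v=u_\varepsilon$, whose right-hand side $4C_1\|u_\varepsilon\|_{H^{0,1}}\|\nabla_h u_\varepsilon\|_{H^{0,1}}^2$ contains the term $\|\partial_3 u_\varepsilon\|_{L^2}\,\|\nabla_h\partial_3 u_\varepsilon\|_{L^2}^2$. This term carries the full power $2$ of the only dissipation available on the left, $2\nu_h\|\nabla_h\partial_3 u_\varepsilon\|_{L^2}^2$: it can be absorbed only if $\|\partial_3 u_\varepsilon(t)\|_{L^2}\lesssim\nu_h$ for all $t$, which is precisely the smallness of Theorem \ref{Global1} and is \emph{not} implied by the hypothesis of Theorem \ref{Global2} (Remark \ref{ApplicThGlob2} shows the theorem must cover data with $\|\partial_3 u_0\|_{L^2}$ arbitrarily large, compensated by a small $\|u_0\|_{L^2}$). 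Nor can it be handled by the Gronwall step you propose, because your coefficient $\int_0^t\|\nabla_h u_\varepsilon\|_{H^{0,1}}^2\,ds$ contains $\int_0^t\|\nabla_h\partial_3 u_\varepsilon\|_{L^2}^2\,ds$, the very quantity to be estimated, and that integral is of size $\|\partial_3 w_0\|_{L^2}^2/\nu_h$, which is large under the stated hypothesis; so the ``self-consistent'' bootstrap keeping $\|\partial_3 u_\varepsilon\|_{L^2}$ below a threshold of order $\nu_h$ cannot close. Finally, the bound you expect, $y(t)\le C\|\partial_3 w_0\|_{L^2}^2\exp(C\nu_h^{-2}\|w_0\|_{L^2}^2)$, has the wrong structure: a linear Gronwall argument cannot generate the product condition on $\|\partial_3 u_0\|_{L^2}^{1/2}\|u_0\|_{L^2}^{1/2}$ appearing in the statement.

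What is missing is the paper's refined nonlinear estimate: instead of Lemma \ref{Lunablav}, one returns to \eqref{uvv2} and \eqref{uvv3}, where every occurrence of the vertical dissipation $\|\nabla_h\partial_3 u_\varepsilon\|_{L^2}$ is multiplied by $\|\nabla_h u_\varepsilon\|_{L^2}$ (not by its $H^{0,1}$ norm) and the total power of the dissipation is at most $3/2$. This gives \eqref{Glob2aux1}, and Young's inequality then produces the Riccati-type inequality \eqref{Glob2aux4}, $\frac{d}{dt}y\le C_3\nu_h^{-1}\|\nabla_h u_\varepsilon\|_{L^2}^2\,y + C_3\nu_h^{-3}\|\nabla_h u_\varepsilon\|_{L^2}^2\,y^2$ with $y=\|\partial_3 u_\varepsilon\|_{L^2}^2$, whose coefficients are time-integrable using only the $L^2$ energy estimate of Lemma \ref{energie}, $\int_0^\infty\|\nabla_h u_\varepsilon\|_{L^2}^2\,ds\le\|w_0\|_{L^2}^2/(2\nu_h)$. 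This inequality is then integrated as a Bernoulli equation (dividing by $y^2$), yielding \eqref{Glob2aux7bis}; the bound degenerates exactly when $\|\partial_3 w_0\|_{L^2}^{1/2}\|w_0\|_{L^2}^{1/2}\exp\big(C_3\|w_0\|_{L^2}^2/(4\nu_h^2)\big)$ reaches $C_3^{-1/4}\nu_h$, which is where the hypothesis of the theorem comes from, and \eqref{Glob2aux8} then gives the uniform bound on $\int\|\nabla_h\partial_3 u_\varepsilon\|_{L^2}^2$. The remaining steps you outline (Proposition \ref{gradh} for global existence of $u_\varepsilon$, Proposition \ref{Cauchy} for the limit $\varepsilon_m\to0$ and for uniqueness) do match the paper, but they only become available once this nonlinear ODE argument replaces the absorption-plus-linear-Gronwall scheme.
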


  \begin{proof}
  Like in the proof of Theorem \ref{Global1},
  it is sufficient to prove that
there exist positive constants $c_1$ and $c_1^*$ such that if
$u_{\varepsilon}(0) =w_0$ belongs to
  $H^1_{0,per}(\tilde{Q})$ and satisfies
  $$
\|\partial_3 w_0\|_{L^2(\Omega)}^{\frac
12}\|w_0\|_{L^2(\Omega)}^{\frac
12}\exp(c_1\frac{\|w_0\|_{L^2}^2}{\nu_h^2})\leq c_1^*\nu_h,
$$
then, for any $\varepsilon>0$, the equations $(NS_{\varepsilon})$ 
admit a unique global
solution $u_{\varepsilon}(t) \in C^0([0, +\infty), \tilde{V})$ with
$u_{\varepsilon}(0)=w_0$ and moreover,  $u_{\varepsilon}$ and
$\nabla_h u_{\varepsilon}$ are uniformly bounded (with respect to
$\varepsilon$) in $L^{\infty}((0,+\infty), H^{0,1}(\tilde{Q})$ and
$L^2((0, +\infty), H^{0,1}(\tilde{Q})$.
\vskip 1mm
Let now $u_{\varepsilon}$ be the local solution of the equations
$(NS_{\varepsilon})$ with $u_{\varepsilon}(0)=w_0$ and let
$T_{\varepsilon}>0$ be the maximal time of existence. Like in the
proof of Theorem \ref{Global1}, $u_{\varepsilon}$ satisfies the
equality \eqref{equD3}. But here, in order to estimate the term
$(\partial_3(u_{\varepsilon}\nabla u_{\varepsilon}), \partial_3
u_{\varepsilon})$, we take into account the estimates \eqref{uvv2} and
\eqref{uvv3}, instead of directly applying Lemma \ref{Lunablav}.
  The equalities  \eqref{equD3}, \eqref{globaux1}, \eqref{globaux2},
\eqref{uvv1} and, the estimates \eqref{uvv2} and \eqref{uvv3} imply 
that, for $0 \leq
t < T_{\varepsilon}$,
\begin{equation}
\label{Glob2aux1}
\begin{split}
\partial_t \|\partial_3 u_{\varepsilon}&\|_{L^2}^{2}
+ 2 \nu_h \|\nabla_h\partial_3 u_{\varepsilon}|\|_{L^2}^{2}
+ 2\varepsilon \| \partial_3^2 u_\varepsilon\|_{L^2}^{2} \cr
  \leq  C_2&\big(\|\nabla_h u_{\varepsilon}\|_{L^2}^{1/2}\|\partial_3 
u_{\varepsilon}\|_{L^2}
  \|\nabla_h \partial_3 u_{\varepsilon}\|_{L^2}^{3/2}+\|\nabla_h 
u_{\varepsilon}\|_{L^2}
  \|\partial_3 u_{\varepsilon}\|_{L^2}
\|\nabla_h \partial_3 u_{\varepsilon}\|_{L^2} \big) .
\end{split}
\end{equation}
Using the Young inequalities $ab \leq \frac{3}{4} a^{\frac
43}+\frac{1}{4} b^4$ and  $ab \leq \frac{1}{2}a^2+ \frac{1}{2}b^2$,
we get the following estimates,
\begin{equation}
\label{Glob2aux2}
\begin{split}
C_2 \|\nabla_h u_{\varepsilon}\|_{L^2}^{1/2}\|\partial_3
u_{\varepsilon}\|_{L^2} &\|\nabla_h
\partial_3 u_{\varepsilon}\|_{L^2}^{3/2} \cr
&\leq \frac {27C_2^4}{32\nu_h^3}\|\nabla_h
u_{\varepsilon}\|_{L^2}^2\|\partial_3
u_{\varepsilon}\|_{L^2}^4+\frac{\nu_h}{2}\|\nabla_h\partial_3 
u_{\varepsilon}\|_{L^2}^2
\end{split}
\end{equation}
and
\begin{equation}
\label{Glob2aux3}
\begin{split}
C_2 \|\nabla_h u_{\varepsilon}\|_{L^2}\|\partial_3
u_{\varepsilon}\|_{L^2}&\|\nabla_h \partial_3
u_{\varepsilon}\|_{L^2} \cr
& \leq \frac{C_2^2}{2\nu_h}\|\nabla_h u_{\varepsilon}\|_{L^2}^2\|\partial_3
u_{\varepsilon}\|_{L^2}^2+\frac{\nu_h}{2}\|\nabla_h\partial_3 
u_{\varepsilon}\|_{L^2}^2 .
\end{split}
\end{equation}
{}From the estimates \eqref{Glob2aux1}, \eqref{Glob2aux2} and
\eqref{Glob2aux3}, we deduce that,  for $0 \leq
t < T_{\varepsilon}$,
\begin{equation}
\label{Glob2aux4}
\begin{split}
\frac{d}{dt}\|\partial_3 u_{\varepsilon}(t)\|_{L^2}^2 &+ \nu_h 
\|\nabla_h \partial_3
u_{\varepsilon}(t)\|_{L^2}^2  + 2\varepsilon \| \partial_3^2
u_\varepsilon\|_{L^2}^{2} \cr
  & \leq C_3 \Big( \frac{1}{\nu_h} \|\nabla_h 
u_{\varepsilon}\|_{L^2}^2\|\partial_3
u_{\varepsilon}\|_{L^2}^2+ \frac{1}{\nu_h^3} \|\nabla_h 
u_{\varepsilon}\|_{L^2}^2
\|\partial_3 u_{\varepsilon}\|_{L^2}^4 \Big),
\end{split}
\end{equation}
where $C_3 = \max( C_2^2/2, 27C_2^4 /32)$. The inequality
\eqref{Glob2aux4} shows that, if there exists $\tau_{\varepsilon} 
<T_{\varepsilon}$
such that $\|\partial_3 
u_{\varepsilon}(\tau_{\varepsilon})\|_{L^2}^2$ vanishes, then
$\|\partial_3 u_{\varepsilon}(t)\|_{L^2}^2$ is identically equal to
zero for $\tau_{\varepsilon} \leq t <T_{\varepsilon}$.\HB
On the time interval $[0,\tau_{\varepsilon})$, the inequality
\begin{equation}
\label{Glob2aux4Bis}
\begin{split}
\frac{d}{dt}\|\partial_3 u_{\varepsilon}(t)\|_{L^2}^{2}\leq
C_3 \Big(
\frac{1}{\nu_h}\|\nabla_h u_{\varepsilon}\|_{L^2}^2& \|\partial_3
u_{\varepsilon}\|_{L^2}^2  \cr
&+\frac{1}{\nu_h ^3}\|\nabla_h u_{\varepsilon}\|_{L^2}^2
\|\partial_3 u_{\varepsilon}\|_{L^2}^4\Big),
\end{split}
\end{equation}
can be written as
$$
-\frac{d}{dt}  \|\partial_3 u_{\varepsilon}(t)\|_{L^2}^{-2}
  \leq
\frac{C_3}{\nu_h}\|\nabla_h u_{\varepsilon}\|_{L^2}^2 \|\partial_3
u_{\varepsilon}\|_{L^2}^{-2} +\frac{C_3}{\nu_h^3}\|\nabla_h
u_{\varepsilon}\|_{L^2}^2 ,
$$
or also
\begin{equation*}
\begin{split}
-\frac{d}{dt}\Big(\|\partial_3
u_{\varepsilon}\|_{L^2}^{-2}\exp(\frac{C_3}{\nu_h}\int_0^t &\|\nabla_h
u_{\varepsilon}(s)\|_{L^2}^2 ds)\Big) \cr
&\leq \frac{C_3}{\nu_h^3}\|\nabla_h
u_{\varepsilon}(t)\|_{L^2}^2\exp(\frac{C_3}{\nu_h}\int_0^t\|\nabla_h
u_{\varepsilon}(s)\|_{L^2}^2ds) .
\end{split}
\end{equation*}
Integrating this inequality from $0$ to $t$,
we obtain, for $0 \leq t <\tau_\varepsilon$,
\begin{equation}
\label{Glob2aux5}
\begin{split}
\|\partial_3 w_0\|_{L^2}^{-2}-\|\partial_3
u_{\varepsilon}&(t)\|_{L^2}^{-2} \exp(\frac{C_3}{\nu_h}\int_0^t \|\nabla_h
u_{\varepsilon}(s)\|_{L^2}^2 ds) \cr
& \leq \frac{C_3}{\nu_h^3}\int_0^t \|\nabla_h
u_{\varepsilon}(s)\|^2_{L^2}ds \times
\exp(\frac{C_3}{\nu_h}\int_0^t \|\nabla_h
u_{\varepsilon}(s)\|_{L^2}^2ds) .
\end{split}
\end{equation}
The second energy estimate in Lemma \ref{energie} and the inequality
\eqref{Glob2aux5} imply that, for $0 \leq t <\tau_\varepsilon$,
\begin{equation}
\label{Glob2aux6}
\begin{split}
\|\partial_3 w_0\|_{L^2}^{-2}- \frac{C_3}{\nu_h^4}
\|w_0\|_{L^2}^2 \exp(\frac{C_3\|w_0\|_{L^2}^2}{\nu_h^2})
\leq
\|\partial_3 u_{\varepsilon}(t)\|_{L^2}^{-2}\exp(\frac{C_3
\|w_0\|_{L^2}^2}{\nu_h^2}) .
\end{split}
\end{equation}
Thus, if we assume that,
$$
\|\partial_3
w_0\|_{L^2}^{-2}- \frac{C_3 \|w_0\|_{L^2}^2}{\nu_h^4}
\exp(\frac{C_3\|w_0\|_{L^2}^2}{\nu_h^2})>0,
$$
that is,
\begin{equation}
\label{Glob2aux7}
\begin{split}
\|\partial_3 w_0\|_{L^2}^{1/2}\|w_0\|_{L^2}^{1/2}
\exp(\frac{C_3\|w_0\|_{L^2}^2}{4 \nu_h^2}) < C_3^{-1/4}\nu_h,
\end{split}
\end{equation}
then, we get the following uniform bound, for $0 \leq t <\tau_\varepsilon$,
\begin{equation}
\label{Glob2aux7bis}
\begin{split}
\|\partial_3 u_{\varepsilon}(t)\|_{L^2}^2\leq
\exp(\frac{C_3\|w_0\|_{L^2}^2}{\nu_h^2})
\Big(\|\partial_3 w_0\|_{L^2}^{-2} -\frac{C_3}{\nu_h^4}
\|w_0\|_{L^2}^2\exp(\frac{C_3\|w_0\|_{L^2}^2}{\nu_h^2})\Big)^{-1} .
\end{split}
\end{equation}
Let us denote $B_0$ the right-hand side term of the inequality
\eqref{Glob2aux7bis}. Integrating the estimate \eqref{Glob2aux4} from
$0$ to $t$ and taking into account the second energy estimate in
Lemma \ref{energie} as well as the estimate \eqref{Glob2aux7bis} and the
definition of $\tau_{\varepsilon}$, we at once obtain the following
inequality, for any $0 \leq t <T_{\varepsilon}$,
\begin{equation}
\label{Glob2aux8}
\begin{split}
\nu_h  \int_{0}^{t}  \|\nabla_h \partial_3
u_{\varepsilon}(s)\|_{L^2}^2 ds + 2\varepsilon \int_{0}^{t} &\| \partial_3^2
u_\varepsilon(s)\|_{L^2}^{2} ds  \cr
& \leq
\frac{C_3}{\nu_h} B_0^2 \big(1 + \frac{1}{\nu_h^2} B_0^2 \big)
\int_{0}^{t} \|\nabla_h u_{\varepsilon}(s)\|_{L^2}^2 ds  \cr
& \leq \frac{C_3}{2 \nu_h^2} B_0^2 \big(1 + \frac{1}{\nu_h^2} B_0^2
\big) \|w_0\|_{L^{2}}^{2}
\end{split}
\end{equation}

\noindent To prove that $u_{\varepsilon}(t)$ exists globally, that is, that
$T_{\varepsilon}=+ \infty$, it remains to show that $\|\nabla_h
u_{\varepsilon}(t)\|_{L^2}$ is uniformly bounded with respect to $t
\in [0, T_{\varepsilon})$.  Like in the proof of Theorem \ref{Global1},
this property is a direct consequence of Proposition \ref{gradh}.
Theorem \ref{Global2} is thus proved.
\end{proof}

\begin{remark} \label{ApplicThGlob2}
The previous theorem allows to take large initial data in the
following sense. For example,  we can take $u_0 \in
\widetilde{H}^{0,1}(Q)$ such that,
$$
\|u_0\|_{L^2(Q)}\leq C\eta^{\alpha}
$$
and
$$
\|\partial_3 u_0\|_{L^2(Q)}\leq C\eta^{-\alpha},
$$
where $\eta$ is a small positive constant going to $0$ and $C>0$ is an
appropriate positive constant
\end{remark}

\begin{remark} \label{IntegrerBisThGlob2}
Let us come back to the inequality \eqref{Glob2aux4Bis}. If we set
$$
y(t)= \frac{\nu_h}{2} + \frac{1}{\nu_h}\|\partial_3
u_{\varepsilon}(t)\|_{L^2}^2~, \quad g(t) =\frac{C_3}{\nu_h}
\|\nabla_h u_{\varepsilon}(t)\|_{L^2}^2~,
$$
the inequality \eqref{Glob2aux4Bis} becomes, for $0 \leq t <
\tau_{\varepsilon}$,
\begin{equation*}
\frac{dy}{dt}(t) \leq  g(t) \, y^2(t).
\end{equation*}
Integrating this inequality from $0$ to $t$, for $0\leq t
<\tau_{\varepsilon}$, we get
$$
- \frac{1}{y(t)} + \frac{1}{y(0)} \leq \int_{0}^{t} g(s) ds,
$$
or also,
$$
y(t) \leq \frac{y(0)}{1 -y(0) \int_{0}^{t} g(s) ds},
$$
as long as $1 -y(0) \int_{0}^{t} g(s) ds >0$.
The previous estimate also writes
\begin{equation}
\label{Glob2Rem1}
\begin{split}
\frac{\nu_h}{2} + \frac{1}{\nu_h}\|\partial_3
u_{\varepsilon}(t)\|_{L^2}^2 \leq \,  &\Big(\frac{\nu_h}{2} + 
\frac{1}{\nu_h}\|\partial_3
u_{\varepsilon}(0)\|_{L^2}^2 \Big) \cr
  \times &\Big(1 -  \frac{C_3}{\nu_h} \int_{0}^{t}
\|\nabla_h u_{\varepsilon}(s)\|_{L^2}^2 ds
\big(\frac{\nu_h}{2} + \frac{1}{\nu_h}\|\partial_3
u_{\varepsilon}(0)\|_{L^2}^2 \big)\Big)^{-1}
\end{split}
\end{equation}
Inequality \eqref{Glob2Rem1} and Lemma \ref{energie} imply that
\begin{equation}
\label{Glob2Rem2}
\begin{split}
\frac{\nu_h}{2} + \frac{1}{\nu_h}\|\partial_3
u_{\varepsilon}(t)\|_{L^2}^2 \leq \,  &\Big(\frac{\nu_h}{2} + 
\frac{1}{\nu_h}\|\partial_3
u_{\varepsilon}(0)\|_{L^2}^2 \Big) \cr
  \times &\Big(1 -  \frac{C_3}{2\nu_h^2}
\|u_{\varepsilon}(0)\|_{L^2}^2
\big(\frac{\nu_h}{2} + \frac{1}{\nu_h}\|\partial_3
u_{\varepsilon}(0)\|_{L^2}^2 \big)\Big)^{-1}
\end{split}
\end{equation}
Thus, if
\begin{equation}
\label{Glob2Rem3}
C_3\|u_{\varepsilon}(0)\|_{L^2}^2
\big(\frac{\nu_h}{2} + \frac{1}{\nu_h}\|\partial_3
u_{\varepsilon}(0)\|_{L^2}^2 \big) \leq \nu_h^2,
\end{equation}
we obtain the following uniform bound, for $0 \leq t <
\tau_{\varepsilon}$,
\begin{equation}
\label{Glob2Rem4}
\frac{\nu_h}{2} + \frac{1}{\nu_h}\|\partial_3
u_{\varepsilon}(t)\|_{L^2}^2 \leq \,  2\Big(\frac{\nu_h}{2} + 
\frac{1}{\nu_h}\|\partial_3
u_{\varepsilon}(0)\|_{L^2}^2 \Big) .
\end{equation}
Like in the proof of Theorem \ref{Global2}, we deduce that, under
the condition \eqref{Glob2Rem3}, the solution $u_{\varepsilon}(t)$
exists globally.

\end{remark}

\vskip 3mm

Theorems \ref{Global1} and \ref{Global2} together with Corollary
\ref{propageH02} at once imply the following result of propagation of
regularity.

\begin{corollary} \label{propageH02Bis}
Under the hypotheses of Theorem \ref{Global1} or \ref{Global2}, if
moreover the initial data $u_0$ belong to $\widetilde{H}^{0,2}_0(Q)$, then  the
solution $u$ of System $(NS_h)$ with $u(0)=u_0$ belongs to
$L^{\infty}(\R_+,\widetilde{H}^{0,2}_0(Q))$ and
$\partial_3^2 \nabla_h u$ belongs to $L^2(\R_+, L^2(Q)^3)$.
  \end{corollary}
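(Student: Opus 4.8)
The plan is to combine the global-existence results under smallness (Theorems~\ref{Global1} and \ref{Global2}) with the $H^{0,2}$-propagation machinery already set up in Proposition~\ref{propH02} and Corollary~\ref{propageH02}, the only new point being to make sure that every bound is uniform on the whole half-line $\R_+$ and not merely on a finite interval.

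First I would fix $u_0 \in \widetilde{H}^{0,1}(Q) \cap \widetilde{H}^{0,2}_0(Q)$ satisfying the smallness hypothesis of Theorem~\ref{Global1} (resp. \ref{Global2}) and choose, using Remark~\ref{H02}, a sequence $u_0^m \in {\tilde H}^1_0(Q) \cap H^2(Q) \cap \widetilde{H}^{0,2}_0(Q)$ converging to $u_0$ in $\widetilde{H}^{0,2}(Q)$, hence in $\widetilde{H}^{0,1}(Q)$. Since the smallness conditions are expressed through the $H^{0,1}$-norm (resp. through $\|u_0\|_{L^2}$ and $\|\partial_3 u_0\|_{L^2}$), which vary continuously along the sequence, for $m$ large the data $u_0^m$ still satisfy a (slightly relaxed) smallness condition, so that for each such $m$ and each $\varepsilon_m$ the problem $(NS_{\varepsilon_m})$ with initial data $\Sigma u_0^m$ has a global classical solution $u_{\varepsilon_m} \in C^0([0,+\infty),\tilde V)$. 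Moreover, as in the proofs of Theorems~\ref{Global1} and \ref{Global2}, the estimates \eqref{energie5auxbis} (resp. \eqref{Glob2aux8}), together with Lemma~\ref{energie}, show that $u_{\varepsilon_m}$ and $\nabla_h u_{\varepsilon_m}$ are bounded in $L^{\infty}(\R_+,H^{0,1}(\tilde Q))$ and in $L^{2}(\R_+,H^{0,1}(\tilde Q))$ uniformly in $m$; in particular $\sup_{s\ge 0}\|\partial_3 u_{\varepsilon_m}(s)\|_{L^2}$ and $\int_{0}^{+\infty}\|\nabla_h u_{\varepsilon_m}(s)\|_{H^{0,1}}^2\,ds$ are bounded uniformly in $m$.

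Next I would apply Proposition~\ref{propH02}. As observed in the proof of Corollary~\ref{propageH02}, $\Sigma u_0^m$ belongs to $H^2(\tilde Q)^3 \cap \tilde V$ precisely because $\partial_3 u_{0,h}^m$ vanishes on $\Gamma_0 \cup \Gamma_1$, and $\|\partial_3^2 \Sigma u_0^m\|_{L^2(\tilde Q)}$ is bounded since $u_0^m \to u_0$ in $\widetilde{H}^{0,2}(Q)$. Writing the estimate \eqref{estimH02} on an arbitrary interval $(0,T_0)$ and letting $T_0 \to +\infty$, the uniform control of $\int_0^{+\infty}\|\nabla_h u_{\varepsilon_m}\|_{H^{0,1}}^2$ and of $\sup_s\|\partial_3 u_{\varepsilon_m}(s)\|_{L^2}$ keeps the right-hand side bounded, so that $\partial_3^2 u_{\varepsilon_m}$ is bounded in $L^{\infty}(\R_+,L^2(\tilde Q)^3)$ and $\nabla_h \partial_3^2 u_{\varepsilon_m}$ is bounded in $L^{2}(\R_+,L^2(\tilde Q)^3)$, uniformly in $m$. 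Then, by Proposition~\ref{Cauchy} (and Corollary~\ref{propageH02}), $u_{\varepsilon_m}$ converges in $L^{\infty}_{loc}(\R_+,L^2(\tilde Q)^3) \cap L^2_{loc}(\R_+,H^{1,0}(\tilde Q))$ to the unique solution $u$ of $(NS_h)$ given by Theorem~\ref{Global1} (resp. \ref{Global2}); by weak-$*$ lower semicontinuity of the $L^{\infty}(\R_+,L^2)$ and $L^{2}(\R_+,L^2)$ norms, $\partial_3^2 u \in L^{\infty}(\R_+,L^2(Q)^3)$ and $\partial_3^2 \nabla_h u \in L^{2}(\R_+,L^2(Q)^3)$; and since $\Sigma u = u$, the component $u_3$ is odd in $x_3$, so $\partial_3 u_h$ vanishes on $\Gamma_0 \cup \Gamma_1$, which with $\div u = 0$ and $\gamma_n u = 0$ yields $u(t) \in \widetilde{H}^{0,2}_0(Q)$ for a.e. $t$. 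Hence $u \in L^{\infty}(\R_+,\widetilde{H}^{0,2}_0(Q))$.

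The only genuinely delicate point, and the one I would treat with care, is this passage to the infinite time interval: Proposition~\ref{propH02} is stated on a finite interval $(0,T_0)$ with a constant depending exponentially on $\int_0^{T_0}\|\nabla_h u_{\varepsilon}\|_{H^{0,1}}^2\,ds$, so the corollary really rests on the fact that, under the smallness conditions of Theorems~\ref{Global1}--\ref{Global2}, this integral is bounded uniformly in $\varepsilon$ \emph{and} uniformly in $T_0$ --- which is exactly what \eqref{energie5auxbis} and \eqref{Glob2aux8} (combined with Lemma~\ref{energie}) provide. Everything else is a routine repetition of the compactness and lower-semicontinuity argument already carried out for Proposition~\ref{Cauchy} and Corollary~\ref{propageH02}.
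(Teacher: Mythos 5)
Your argument is correct and is exactly the route the paper intends: the paper proves this corollary by simply invoking Theorems~\ref{Global1}--\ref{Global2} (whose proofs give the uniform bounds on $\sup_t\|\partial_3 u_{\varepsilon_m}\|_{L^2}$ and $\int_0^{+\infty}\|\nabla_h u_{\varepsilon_m}\|_{H^{0,1}}^2\,ds$) together with Corollary~\ref{propageH02}, i.e.\ Propositions~\ref{Cauchy} and \ref{propH02}. Your write-up merely makes explicit the uniformity in $T_0$ of the estimate \eqref{estimH02} and the lower-semicontinuity passage to the limit, which is precisely the implicit content of the paper's one-line proof.
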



\section{The case of general initial data}

In this section, we want to prove the local existence of the solution $u(t)$
of the equations $(NS_h)$, when the initial data are not necessarily
small.

  \begin{theorem} \label{Local}
  Let $U_0$ be given in  $\widetilde{H}^{0,1}(Q)$. There exist a positive time
  $T_0$ and a positive constant $\eta$ such that, if $u_0$ belongs to
$\widetilde{H}^{0,1}(Q)$ and $\|U_0 -u_0\|_{H^{0,1}} \leq \eta$, then
the system $(NS_h)$ admits a
(unique) strong solution $u(t)$, with $u(0)=u_0$, such that
$$
   u \in L^\infty((0,T_0),\widetilde{H}^{0,1}(Q)) ~\quad \hbox{ and }\quad
\partial_3 \nabla_h u \in  L^2((0,T_0), L^2(Q)^3).
$$
  \end{theorem}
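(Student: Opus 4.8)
The plan is to follow the same scheme as in Theorems~\ref{Global1} and~\ref{Global2}. By Proposition~\ref{Cauchy} and Lemma~\ref{dense} it is enough to produce a time $T_0>0$ and a radius $\eta>0$, both depending only on $U_0$ and $\nu_h$, such that for every $\varepsilon>0$ and every $w_0\in\tilde V$ with $\|w_0-\Sigma U_0\|_{H^{0,1}(\tilde Q)}\le C\eta$, the solution $u_\varepsilon$ of $(NS_\varepsilon)$ with $u_\varepsilon(0)=w_0$ exists on $[0,T_0]$ and $u_\varepsilon,\nabla_h u_\varepsilon$ are bounded in $L^\infty((0,T_0),H^{0,1}(\tilde Q))$ and $L^2((0,T_0),H^{0,1}(\tilde Q))$ by constants independent of $\varepsilon$ and $w_0$. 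Applying this with $w_0=\Sigma u_0^m$, where $u_0^m\in\tilde H^1_0(Q)$ tends to $u_0$ in $\widetilde H^{0,1}(Q)$ and $\|u_0-U_0\|_{H^{0,1}}\le\eta$ (so that $\|\Sigma u_0^m-\Sigma U_0\|_{H^{0,1}(\tilde Q)}\le C\eta$ for $m$ large), Proposition~\ref{Cauchy} then yields the solution $u^*$ of $(NS_h)$ on $(0,T_0)$ with the stated regularity, and part~2 of that proposition gives uniqueness.

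The first ingredient is that the Riccati mechanism behind Theorem~\ref{Global2} needs \emph{no} smallness: the derivation of \eqref{Glob2aux4} (hence of the inequality of Remark~\ref{IntegrerBisThGlob2}) is valid for arbitrary data, and integrating it as in \eqref{Glob2aux5}--\eqref{Glob2aux7bis} shows that, writing $\Phi(t)=\int_0^t\|\nabla_h u_\varepsilon(s)\|_{L^2}^2\,ds$, both $\|\partial_3 u_\varepsilon(t)\|_{L^2}$ and $\int_0^t\|\nabla_h\partial_3 u_\varepsilon\|_{L^2}^2$ stay bounded uniformly in $\varepsilon$ as long as
$$\Phi(t)<\Phi_{\max}(w_0):=\frac{\nu_h}{C_3}\,\log\!\Big(1+\frac{\nu_h^2}{\|\partial_3 w_0\|_{L^2}^2}\Big).$$
Since $\|\partial_3 w_0\|_{L^2}\le\|\partial_3\Sigma U_0\|_{L^2}+C\eta$, we get $\Phi_{\max}(w_0)\ge\Phi_*>0$ with $\Phi_*$ depending only on $U_0,\nu_h,\eta$. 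By Lemma~\ref{energie} we have $\Phi(t)\le\frac1{2\nu_h}\|w_0\|_{L^2}^2$ for all $t$; for small data this already reproves Theorem~\ref{Global2}, but since $\|w_0\|_{L^2}$ is now arbitrary the energy bound is useless and the whole point becomes to show \emph{directly} that $\Phi(T_0)<\Phi_*$ for some $T_0$ uniform in $\varepsilon$ and $w_0$.

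To get this short-time smallness of $\Phi$ I would split off a regular reference solution. Using a variant of the mollification arguments of Lemma~\ref{dense} and Remark~\ref{H02}, write $\Sigma U_0=V_0+W_0$ with $V_0\in\tilde V\cap H^2(\tilde Q)^3$ (symmetric, so that its $x_3$-parity structure is kept) and $\|W_0\|_{H^{0,1}(\tilde Q)}\le\delta$, the parameter $\delta$ to be fixed later. Let $v_\varepsilon$ solve the \emph{linear} problem $\partial_t v_\varepsilon-\nu_h\Delta_h v_\varepsilon-\varepsilon\partial_3^2 v_\varepsilon=-\nabla q_\varepsilon$, $\div v_\varepsilon=0$, with the boundary conditions of $(NS_\varepsilon)$ and $v_\varepsilon(0)=V_0$; testing it and its $\partial_3$- and $\partial_3^2$-derivatives (as in Lemma~\ref{energie} and Proposition~\ref{gradh}) gives, uniformly in $\varepsilon$ and $t$, that $\partial_3^j v_\varepsilon$ ($j=0,1,2$) is bounded in $L^\infty_tL^2_x$ and $\nabla_h\partial_3^j v_\varepsilon$ ($j=0,1$) is bounded in $L^\infty_tL^2_x$, with $\int_0^{T_0}\|\nabla_h v_\varepsilon(s)\|_{L^2}^2\,ds\le T_0\|\nabla_h V_0\|_{L^2}^2$ and $v_\varepsilon$ bounded in terms of $\|V_0\|_{H^2(\tilde Q)}$. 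Set $z_\varepsilon=u_\varepsilon-v_\varepsilon$; then $z_\varepsilon(0)=w_0-V_0$ with $\|z_\varepsilon(0)\|_{H^{0,1}}\le\delta+C\eta$, and $z_\varepsilon$ satisfies the Navier--Stokes-type equation with nonlinearity $(v_\varepsilon+z_\varepsilon)\cdot\nabla(v_\varepsilon+z_\varepsilon)$. Running for $z_\varepsilon$ the $L^2$- and $\partial_3$-energy estimates of the proof of Theorem~\ref{Global2} (equations \eqref{equD3}--\eqref{energie2aux}): the terms at least quadratic in $z_\varepsilon$ are controlled by Lemmas~\ref{LestLpLq} and~\ref{Lunablav} and absorbed once $\|z_\varepsilon\|_{H^{0,1}}\le\mu_0$ for a suitable $\mu_0=\mu_0(U_0,\nu_h)$, while the terms containing $v_\varepsilon$ are handled by anisotropic H\"older and Agmon inequalities and the bound on $\|v_\varepsilon(t)\|_{H^2(\tilde Q)}$ --- here it is essential, just as in \eqref{uvv1}, that $\partial_3 v_{\varepsilon,3}=-\div_h v_{\varepsilon,h}$ keeps all vertical derivatives at order $\le2$, so that no factor $\varepsilon^{-1}$ appears. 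After Young's inequality and the Poincar\'e inequality in $x_h$ one obtains, on any interval where $\|z_\varepsilon\|_{H^{0,1}}\le\mu_0$, an inequality $\frac{d}{dt}\|z_\varepsilon\|_{H^{0,1}}^2+\nu_h\|\nabla_h z_\varepsilon\|_{H^{0,1}}^2\le a(t)\|z_\varepsilon\|_{H^{0,1}}^2+b(t)$ with $\int_0^T(a+b)\le C(U_0,\nu_h)T$; a continuity/bootstrap argument as in the proof of Theorem~\ref{Global1} (choosing first $\delta$ and $\eta$ small, then $T_0$ small, all in terms of $U_0$ and $\nu_h$) then gives $\|z_\varepsilon(t)\|_{H^{0,1}}\le\mu_0$ on $[0,T_0]$ and $\int_0^{T_0}\|\nabla_h z_\varepsilon(s)\|_{L^2}^2\,ds<\Phi_*/4$; shrinking $T_0$ so that also $T_0\|\nabla_h V_0\|_{L^2}^2<\Phi_*/4$, we conclude $\Phi(T_0)\le 2\int_0^{T_0}(\|\nabla_h v_\varepsilon\|_{L^2}^2+\|\nabla_h z_\varepsilon\|_{L^2}^2)<\Phi_*$, uniformly in $\varepsilon$ and $w_0$.

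With this the rest is routine: by the previous steps, on the maximal interval of existence $\|\partial_3 u_\varepsilon(t)\|_{L^2}$ and $\int_0^t\|\nabla_h\partial_3 u_\varepsilon\|_{L^2}^2$ are bounded for $t\le T_0$, uniformly in $\varepsilon$ and $w_0$, which with Lemma~\ref{energie} gives the required uniform bounds on $u_\varepsilon$ and $\nabla_h u_\varepsilon$ (together with uniform integrability of $\varepsilon\|\partial_3 u_\varepsilon\|_{L^2}^2$ and $\varepsilon\|\partial_3^2 u_\varepsilon\|_{L^2}^2$); a standard continuation argument using \eqref{explose} and Proposition~\ref{gradh} then forces $T_\varepsilon>T_0$, and Proposition~\ref{Cauchy} delivers the solution $u^*$ of $(NS_h)$ on $(0,T_0)$ and its uniqueness. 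I expect the genuine difficulty to be the splitting step: for large data $\Phi$ cannot be made small from the energy estimate, and the forcing $v_\varepsilon\cdot\nabla v_\varepsilon$ in the equation for the small remainder $z_\varepsilon$ is bounded but not small; the remedy is precisely to choose the reference datum $V_0$ regular enough that $v_\varepsilon$ is bounded, uniformly in $\varepsilon$, with two vertical derivatives, so that this forcing contributes only an $O(T_0)$ amount to the energy of $z_\varepsilon$ and is harmless on a sufficiently short time interval.
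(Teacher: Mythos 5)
Your argument is correct in outline, and its first half is exactly the paper's strategy: reduce via Proposition~\ref{Cauchy} to uniform bounds for $(NS_{\varepsilon})$, then use the Riccati mechanism of Theorem~\ref{Global2} and Remark~\ref{IntegrerBisThGlob2} to reduce everything to the short-time smallness of $\int_0^{T_0}\|\nabla_h u_{\varepsilon}(s)\|_{L^2}^2\,ds$ (your threshold $\Phi_{\max}$ is a sharper, exactly integrated version of \eqref{Glob2aux5}--\eqref{Glob2aux7bis}). Where you genuinely diverge is in how this smallness is produced. The paper keeps the full datum $v_0$ in the linear Stokes flow $(LS_{\varepsilon})$ and gives the nonlinear remainder $z_{\varepsilon}$ \emph{zero} initial data, so that $((z_{\varepsilon}+v_{\varepsilon})\nabla z_{\varepsilon},z_{\varepsilon})_{L^2}=0$ and only a linear-in-$z_{\varepsilon}$, purely $L^2$-level Gronwall estimate \eqref{zepsaux1}--\eqref{zepsaux8} is needed, its coefficients controlled by the $H^{0,1}$ bounds \eqref{vestimeaux1}--\eqref{vestimeaux2} that the linear flow propagates from $v_0\in H^{0,1}$; the smallness of $\int\|\nabla_h v_{\varepsilon}\|_{L^2}^2$ is then obtained by splitting the \emph{linear} problem spectrally into $\P_{k_0}v_0$ and $(I-\P_{k_0})v_0$, where only $L^2$-smallness of the tail is required. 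You instead regularize the datum itself, $\Sigma U_0=V_0+W_0$ with $V_0\in\tilde V\cap H^2(\tilde Q)^3$ and $W_0$ small in $H^{0,1}$, run the linear flow on $V_0$, and close a small-data $H^{0,1}$ estimate with absorption/bootstrap for $z_{\varepsilon}$. This is a legitimate alternative, and it is not the ``classical approach'' dismissed in the paper's final remark (that remark concerns the spectral tail, which is small only in $L^2$, not in $H^{0,1}$), but it costs two things the paper avoids: (i) a density statement not literally available in the paper, namely approximation of elements of $\widetilde{H}^{0,1}(Q)$ in the $H^{0,1}$ norm by symmetric divergence-free $H^2$ fields --- this does hold by adding a vertical mollification of the periodized symmetric extension to the proof of Lemma~\ref{dense}, but it must be proved; and (ii) an $H^{0,1}$-level estimate for $z_{\varepsilon}$ with nonvanishing self-interaction, hence the bootstrap $\|z_{\varepsilon}\|_{H^{0,1}}\le\mu_0\sim\nu_h$ (forcing $\delta,\eta$ small relative to $\nu_h$, which is allowed) and cross terms such as $(z_{\varepsilon,3}\,\partial_3^2 v_{\varepsilon},\partial_3 z_{\varepsilon})$, which need the uniform-in-$\varepsilon$ bounds on $\partial_3^2 v_{\varepsilon}$ in $L^\infty_t L^2$ and on $\nabla_h\partial_3^2 v_{\varepsilon}$ in $L^2_t L^2$; these do follow from linear energy estimates since $V_0\in H^2$, but some of those coefficients are only $L^1$ in time, so your claim $\int_0^T(a+b)\le CT$ should read $\le C(1+T)$ --- harmless for the Gronwall and bootstrap. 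In short, the paper's route buys economy (no new density lemma, no second vertical derivatives, a linear $L^2$ Gronwall for the remainder), while yours buys independence from the spectral decomposition of the Stokes operator and a slightly sharper existence threshold.
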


\begin{proof}
According to the strategy explained in the introduction and according to
Proposition \ref{Cauchy}, it is sufficient to prove that there exist
positive constants $\eta$ and $T_0$ such that, if
$u_{\varepsilon}(0) =v_0$ belongs to
  $H^1_{0,per}(\tilde{Q})$ and satisfies
\begin{equation}
\label{v0U0}
  \|v_0 - \Sigma U_0\|_{H^{0,1}(\tilde{Q})}\leq \eta,
\end{equation}
then, for any $\varepsilon>0$ small enough, the equations
$(NS_{\varepsilon})$ admit a unique (local)
solution $u_{\varepsilon}(t) \in C^0([0, T_0], \tilde{V})$ with
$u_{\varepsilon}(0)=v_0$ and moreover,  $u_{\varepsilon}$ and
$\partial_3 \nabla_h u_{\varepsilon}$ are uniformly bounded (with respect to
$\varepsilon$) in $L^{\infty}((0,T_0), H^{0,1}(\tilde{Q}))$ and
$L^2((0, T_0), L^2(\tilde{Q})^3)$. \HB
Let $u_{\varepsilon}(t)$ be the strong solution of the equations
$(NS_{\varepsilon})$ with initial data $u_{\varepsilon}(0)=v_0 \in
H^1_{0,per}(\tilde{Q})$ satisfying the condition \eqref{v0U0}. Let
$T_{\varepsilon}>0$ be the maximal time of existence of this solution.
The proof of Theorem \ref{Global2} and Remark
\ref{IntegrerBisThGlob2} show that, if
$$
\frac{C_3}{\nu_h} \int_{0}^{\tau}
\|\nabla_h u_{\varepsilon}(s)\|_{L^2}^2 ds
\big(\frac{\nu_h}{2} + \frac{1}{\nu_h}\|\partial_3
u_{\varepsilon}(0)\|_{L^2}^2 \big) < 1,
$$
then $T_{\varepsilon} > \tau$.  \HB
It is thus sufficient to show that, for $\eta >0$ small enough, there
exist a positive constant $T_0$ such that, for any $\varepsilon >0$, the strong
solution $u_{\varepsilon}$ of $(NS_{\varepsilon})$ satisfies the
inequality
\begin{equation}
\label{bornexist0}
\frac{C_3}{\nu_h} \int_{0}^{T_0}
\|\nabla_h u_{\varepsilon}(s)\|_{L^2}^2 ds
\big(\frac{\nu_h}{2} + \frac{1}{\nu_h}\|\partial_3
u_{\varepsilon}(0)\|_{L^2}^2 \big) < \frac{1}{2}.
\end{equation}
Actually, the property \eqref{bornexist0} will be proved if we show that,
for any positive number $\delta$, there
exist two positive numbers $T_0=T_0(\delta)$ and
$\eta_0=\eta_0(\delta)$ such that
\begin{equation}
\label{bornexist}
\int_{0}^{T_0} \|\nabla_h u_{\varepsilon}(s)\|_{L^2}^2 ds \leq \delta.
\end{equation}
The remaining part of the proof consists in showing Property
\eqref{bornexist}.  \HB
Notice that Lemma \ref{energie} gives us an estimate of the quantity
$\int_{0}^{t} \|\nabla_h u_{\varepsilon}(s)\|_{L^2}^2 ds$, which we have
used in the proofs of Theorems \ref{Global1} and
\ref{Global2}. Unfortunately, here the initial data
$u_{\varepsilon}=v_0$ are not necessarily small. In order to prove
Property  \eqref{bornexist}, we write the solution $u_{\varepsilon}$
as
$$
u_{\varepsilon}= v_{\varepsilon} + z_{\varepsilon},
$$
where $v_{\varepsilon}$ is the solution of the linear Stokes problem
   \begin{equation*}(LS_{\varepsilon})
\begin{cases}
\partial_t v_{\varepsilon} -\nu_h \Delta_h v_{\varepsilon}
-\varepsilon\partial_{x_3}^2
v_{\varepsilon}=-\nabla q_{\varepsilon}  ~\mbox{in}~ \tilde{Q}, ~t
>0, \\
\div v_{\varepsilon}=0  ~\mbox{ in } \tilde{Q}, ~ t >0,  \\
v_{\varepsilon}|_{\partial\Omega\times (-1,1)}=0 , ~ t >0, \\
v_{\varepsilon}(x_h,x_3)=v_{\varepsilon}(x_h,x_3+2) , ~ t >0, \\
v_{\varepsilon}|_{t=0} = v_0,
\end{cases}
\end{equation*}
and where $z_{\varepsilon}$ is the solution of the following
auxiliary nonlinear system
   \begin{equation*}(Z_{\varepsilon})
\begin{cases}
\partial_t z_{\varepsilon} + (z_{\varepsilon}+ v_{\varepsilon})\nabla
(z_{\varepsilon} + v_{\varepsilon}) -\nu_h \Delta_h z_{\varepsilon}
-\varepsilon\partial_{x_3}^2
z_{\varepsilon}=-\nabla q^*_{\varepsilon} ~\mbox{in} ~\tilde{Q},\,t
>0, \\
\div z_{\varepsilon}=0  ~\mbox{ in } \tilde{Q}, ~t >0,  \\
z_{\varepsilon}|_{\partial\Omega\times (-1,1)}=0 ~, ~t >0, \\
z_{\varepsilon}(x_h,x_3)=z_{\varepsilon}(x_h,x_3+2),~t >0, \\
z_{\varepsilon}|_{t=0} = 0.
\end{cases}
\end{equation*}
The Stokes problem $(LS_{\varepsilon})$ admits a unique (global)
classical solution $v_{\varepsilon}$ in $C^0([0, +\infty),
\tilde{V})$. Lemma \ref{energie} implies that, for any $t \geq 0$,
\begin{equation}
\label{vestimeaux1}
\|v_{\varepsilon}(t)\|_{L^{2}}^{2} + \nu_h \int_{0}^{t} \|\nabla_h 
v_\varepsilon(s)\|_{L^2}^{2} ds +
\varepsilon \int_{0}^{t} \|\partial_3 v_\varepsilon(s)\|_{L^2}^{2}
ds \leq   \|v_0\|_{L^2}^{2}.
\end{equation}
Arguing as in the proofs of Theorems \ref{Global1} and
\ref{Global2}, one at once shows that, for $t \geq 0$,
\begin{equation}
\label{vestimeaux2}
\|\partial_3v_{\varepsilon}(t)\|_{L^{2}}^{2} + \nu_h \int_{0}^{t}
\|\nabla_h \partial_3v_\varepsilon(s)\|_{L^2}^{2} ds +
\varepsilon \int_{0}^{t} \|\partial_3^2 v_\varepsilon(s)\|_{L^2}^{2}
ds \leq   \|\partial_3v_0\|_{L^2}^{2}.
\end{equation}
Notice that Problem $(Z_{\varepsilon})$ also admits a unique
classical solution $z_{\varepsilon} \in C^0([0, T_{\varepsilon}),
\tilde{V})$, where $T_{\varepsilon}$ is the maximal time of existence
of $u_{\varepsilon}$.

We will prove that, for $\eta >0$ small enough,
there exists $T_0 >0$, independent of $\varepsilon$, but depending on
$U_0$, such that,
\begin{equation}
\label{bornexist2}
\int_{0}^{T_0} \|\nabla_h v_{\varepsilon}(s)\|_{L^2}^2 ds \leq
\frac{\delta}{2}~, \quad \int_{0}^{T_0} \|\nabla_h 
z_{\varepsilon}(s)\|_{L^2}^2 ds \leq
\frac{\delta}{2}~.
\end{equation}
\vskip 1mm

We introduce a positive number $\delta_0 \leq \delta$, which will be
made more precise later.
In order to prove the first inequality  of \eqref{bornexist2}, we
proceed as follows by decomposing the linear Stokes problem
$(LS_{\varepsilon})$ into two
auxiliary linear systems, the first one with very regular initial
data and the second one with small initial data. We recall that $P$
is the classical Leray projector. Let $A_0$ be the
Stokes operator $A_0= - P\Delta$ with homogeneous Dirichlet boundary
conditions on $\partial \Omega \times (-1,1)$ and
periodic boundary conditions in the vertical variable. The spectrum
of $A_0$ consists in a nondecreasing sequence of eigenvalues
$$
0 < \lambda_0 < \lambda_1 \leq \lambda_2 \leq \cdots \leq \lambda_m
\leq \cdots,
$$
going to infinity as $m$ goes to infinity. We denote
$\P_k$ the projection onto the space generated by the eigenfunctions
associated to the first $k$ eigenvalues of the operator $A_0$. There
exists an integer $k_0=k_0(\delta_0)$ such that,
\begin{equation}
\label{lambdak0}
\|(I-\P_k) \Sigma U_0\|_{L^{2}}^2 \leq \frac{\delta_0 \nu_h}{16}~, \forall
k\geq k_0~.
\end{equation}
If $v_0 \in H^1_{0, per}(\tilde{Q})$ satisfies the condition
\eqref{v0U0}, the property \eqref{lambdak0} implies that
\begin{equation}
\label{lambdak0bis}
\|(I-\P_{k_0}) v_0\|_{L^{2}}^2 \leq  2 \|(I-\P_{k_0})
U_0\|_{L^{2}}^2 + 2 \|(I-\P_{k_0}) (U_0-v_0)\|_{L^{2}}^2 \leq
\frac{\delta_0 \nu_h}{8} + 2 \eta^2~.
\end{equation}
On the other hand, the following obvious estimate holds,
\begin{equation}
\label{estH1k0}
\|\P_{k_0} v_0\|_{H^1(\tilde{Q})}^2 \leq \lambda_{k_0 +1}
\|v_0\|_{L^{2}}^2 \leq
\lambda_{k_0 +1} ( 2 \|U_0\|_{L^{2}}^2 + 2 \eta^2)~.
\end{equation}
We next decompose $v_{\varepsilon}$ into the sum $v_{\varepsilon}=
v_{1,\varepsilon} + v_{2,\varepsilon}$ where $v_{1,\varepsilon}$ is
the solution of the Stokes problem
   \begin{equation*}(LS_{1,\varepsilon})
\begin{cases}
\partial_t v_{1,\varepsilon} -\nu_h \Delta_h v_{1,\varepsilon}
-\varepsilon\partial_{x_3}^2
v_{1, \varepsilon}=-\nabla q_{1,\varepsilon}  ~\mbox{ in } \tilde{Q}~, ~t
>0, \\
\div v_{1,\varepsilon}=0  ~\mbox{ in } \tilde{Q}~, ~ t >0,  \\
v_{1,\varepsilon}|_{\partial\Omega\times (-1,1)}=0 ~, ~ t >0, \\
v_{1,\varepsilon}(x_h,x_3)=v_{1,\varepsilon}(x_h,x_3+2) ~, ~ t >0, \\
v_{1,\varepsilon}|_{t=0} = \P_{k_0} v_0,
\end{cases}
\end{equation*}
and $v_{2,\varepsilon}$ is
the solution of the Stokes problem
   \begin{equation*}(LS_{2,\varepsilon})
\begin{cases}
\partial_t v_{2,\varepsilon} -\nu_h \Delta_h v_{2,\varepsilon}
-\varepsilon\partial_{x_3}^2
v_{2,\varepsilon}=-\nabla q_{2,\varepsilon}  ~\mbox{ in } \tilde{Q}~, ~t
>0, \\
\div v_{2,\varepsilon}=0  ~\mbox{ in } \tilde{Q}~, ~ t >0,  \\
v_{2,\varepsilon}|_{\partial\Omega\times (-1,1)}=0 ~, ~ t >0, \\
v_{2,\varepsilon}(x_h,x_3)=v_{2,\varepsilon}(x_h,x_3+2) ~, ~ t >0, \\
v_{2,\varepsilon}|_{t=0} =(I-\P_{k_0}) v_0.
\end{cases}
\end{equation*}
{}From Lemma \ref{energie}, we at once deduce that, for $t \geq 0$,
\begin{equation}
\label{v2lemme4}
\begin{split}
\|v_{2,\varepsilon}(t)\|_{L^{2}}^{2} + \nu_h \int_{0}^{t}
\|\nabla_h v_{2,\varepsilon}(s)\|_{L^2}^{2} ds +
\varepsilon  &\int_{0}^{t} \|\partial_3 v_{2,\varepsilon}(s)\|_{L^2}^{2}
ds \cr
\leq  & \|(I- \P_{k_0})v_0\|_{L^2}^{2}
\leq \frac{\delta_0 \nu_h}{8} + 2\eta^2 ,
\end{split}
\end{equation}
Hence, if $\eta^2 \leq \delta_0 \nu_h /16$, we obtain, for any $t \geq
0$,
\begin{equation}
\label{v2estime}
\int_{0}^{t}
\|\nabla_h v_{2,\varepsilon}(s)\|_{L^2}^{2} ds \leq \frac{\delta_0}{8}
+ \frac{2 \eta^2}{\nu_h}  \leq \frac{\delta_0}{4}.
\end{equation}
In order to get an upper bound of the term $\int_{0}^{t} \|\nabla_h
v_{1,\varepsilon}(s)\|_{L^2}^{2} ds$, we first estimate $ \|\nabla_h
v_{1,\varepsilon}(s)\|_{L^2}^{2}$ for any $s \geq 0$. Like in the
proof of Proposition \ref{gradh}, we take the inner product in
$L^2(\tilde{Q})^3$ of the first equation of $(LS_{1,\varepsilon})$ with
$-P \Delta_h v_{1,\varepsilon}$. Arguing as in the proof of
Proposition \ref{gradh}, we obtain, for any $t \geq 0$,
$$
\partial_t \|\nabla_h v_{1,\varepsilon}\|_{L^{2}}^{2} +  \nu_h \|P
\Delta_h v_{1,\varepsilon}\|_{L^{2}}^{2} +
2 \varepsilon \|\nabla_h \partial_3 v_{1,\varepsilon}\|_{L^{2}}^{2}
\leq 0 .
$$
Integrating the above inequality between $0$ and $t$ and taking into
account the estimate \eqref{estH1k0}, we obtain, for $t \geq 0$,
\begin{equation*}
\begin{split}
\|\nabla_h v_{1,\varepsilon}(t)\|_{L^{2}}^{2} +  \nu_h \int_{0}^{t}
  \|P \Delta_h &v_{1,\varepsilon}(s)\|_{L^{2}}^{2} ds +
2 \varepsilon \int_{0}^{t} \|\nabla_h \partial_3
v_{1,\varepsilon}(s)\|_{L^{2}}^{2}ds \cr
\leq & \|\P_{k_0}v_0\|_{H^1(\tilde{Q})}^{2} \leq \lambda_{k_0 +1}(2
\|U_0\|_{L^{2}}^{2} + 2\eta^2).
\end{split}
\end{equation*}
{}From the above inequality, we deduce that, for $t \geq 0$,
\begin{equation}
\label{v1estime1}
\int_{0}^{t}  \|\nabla_h v_{1,\varepsilon}(s)\|_{L^{2}}^{2} ds \leq t
\lambda_{k_0 +1}(2 \|U_0\|_{L^{2}}^{2} + 2\eta^2) \leq
t\lambda_{k_0 +1}(2 \|U_0\|_{L^{2}}^{2} +  \frac{\delta_0 \nu_h}{8}) ,
\end{equation}
  and thus, if
  \begin{equation}
  \label{T0cond1}
  0 < T_0 \leq  \frac{\delta_0}{4 \lambda_{k_0 +1}}(2 \|U_0\|_{L^{2}}^{2} +
\frac{\delta_0 \nu_h}{8})^{-1},
\end{equation}
we have
\begin{equation}
\label{v1estime2}
\int_{0}^{T_0}  \|\nabla_h v_{1,\varepsilon}(s)\|_{L^{2}}^{2} ds
\leq \frac{\delta_0}{4}  .
\end{equation}
The inequalities \eqref{v2estime} and \eqref{v1estime2} imply that,
if $\eta^2 \leq \delta_0 \nu_h /16$ and if the condition
\eqref{T0cond1} holds, then
\begin{equation}
\label{vborneinteg}
\int_{0}^{T_0}  \|\nabla_h v_{\varepsilon}(s)\|_{L^{2}}^{2} ds
\leq \frac{\delta_0}{2}  \leq \frac{\delta}{2}.
\end{equation}
\vskip 1mm

It remains to bound the integral $\int_{0}^{T_0}  \|\nabla_h
z_{\varepsilon}(s)\|_{L^{2}}^{2} ds$. Taking the inner product in
$L^2(\tilde{Q})^3$ of the first equation of System
$(Z_{\varepsilon})$ with $z_{\varepsilon}$, we obtain the equality
\begin{equation}
\label{zepsaux1}
\begin{split}
\frac{1}{2} \partial_t \| z_{\varepsilon}\|_{L^{2}}^{2} +  \nu_h
\|\nabla_h z_{\varepsilon}\|_{L^{2}}^{2} +
  \varepsilon \|\partial_3 z_{\varepsilon}\|_{L^{2}}^{2}
=& -(z_{\varepsilon,3} \partial_3 v_{\varepsilon},z_{\varepsilon})
-(z_{\varepsilon,h} \nabla_h v_{\varepsilon},z_{\varepsilon})  \cr
& -(v_{\varepsilon,3} \partial_3 v_{\varepsilon},z_{\varepsilon})
-(v_{\varepsilon,h} \nabla_h v_{\varepsilon},z_{\varepsilon})
\end{split}
\end{equation}
We next estimate the four terms of the right-hand side member of the equality
\eqref{zepsaux1}. Applying Lemma \ref{LestLpLq} and using the fact
that $\partial_3 z_{\varepsilon,3}=- \div_h z_{\varepsilon,h}$, we
obtain, for $0 \leq t\leq T_{\varepsilon}$,
\begin{equation*}
\begin{split}
|(z_{\varepsilon,3} \partial_3 v_{\varepsilon},z_{\varepsilon})| \leq
\, &  \|z_{\varepsilon,3}\|_{L^{\infty}_v(L^2_h)} \|\partial_3
v_{\varepsilon}\|_{L^{2}_v(L^4_h)}
\|z_{\varepsilon}\|_{L^{2}_v(L^4_h)} \cr
\leq \,  &C_0^3 \big(\|z_{\varepsilon,3}\|_{L^2}^{1/2} \|\partial_3
z_{\varepsilon,3}\|_{L^2}^{1/2} + \|z_{\varepsilon,3}\|_{L^2} \big) \cr
& \times \|\partial_3 v_{\varepsilon}\|_{L^2}^{1/2}
\|\partial_3 \nabla_h v_{\varepsilon}\|_{L^2}^{1/2}
\|z_{\varepsilon}\|_{L^2}^{1/2} \|\nabla_h
z_{\varepsilon}\|_{L^2}^{1/2} \cr
\leq \,&C_0^3 \big(\|z_{\varepsilon,3}\|_{L^2}^{1/2} \|\nabla_h
z_{\varepsilon,h}\|_{L^2}^{1/2} + \|z_{\varepsilon,3}\|_{L^2}\big) \cr
& \times \|\partial_3 v_{\varepsilon}\|_{L^2}^{1/2}
\|\partial_3 \nabla_h v_{\varepsilon}\|_{L^2}^{1/2}
\|z_{\varepsilon}\|_{L^2}^{1/2} \|\nabla_h
z_{\varepsilon}\|_{L^2}^{1/2} .
\end{split}
\end{equation*}
Applying the Young inequality to the above estimate, we get the
inequality
\begin{equation}
\label{zepsaux2}
\begin{split}
|(z_{\varepsilon,3} \partial_3 v_{\varepsilon},z_{\varepsilon})| \leq
& \frac{\nu_h}{8}\|\nabla_h z_{\varepsilon}\|_{L^2}^{2}  +
\frac{4C_0^6}{\nu_h} \|\partial_3 v_{\varepsilon}\|_{L^2}
\|\partial_3 \nabla_h v_{\varepsilon}\|_{L^2}
\|z_{\varepsilon}\|_{L^2}^{2} \cr
&+ \frac{3C_0^4}{2 \nu_h^{1/3}} \|\partial_3 v_{\varepsilon}\|_{L^2}^{2/3}
\|\partial_3 \nabla_h v_{\varepsilon}\|_{L^2}^{2/3} 
\|z_{\varepsilon}\|_{L^2}^{2}.
\end{split}
\end{equation}
Applying Lemma \ref{LestLpLq} and the Young inequality again, we also obtain
the following estimate, for $0 \leq t\leq T_{\varepsilon}$,
\begin{equation}
\label{zepsaux3}
\begin{split}
|(z_{\varepsilon,h} \nabla_h v_{\varepsilon},z_{\varepsilon})| & \leq
\, C_0^2 \|\nabla_h v_{\varepsilon}\|_{L^{\infty}_v(L^2_h)} \|\nabla_h
z_{\varepsilon}\|_{L^{2}} \|z_{\varepsilon}\|_{L^{2}} \cr
\leq \,  &C_0^3 \Big( \|\nabla_h v_{\varepsilon}\|_{L^2}^{1/2}
\|\nabla_h \partial_3 v_{\varepsilon}\|_{L^2}^{1/2}
+ \|\nabla_h v_{\varepsilon}\|_{L^2} \Big)
\|\nabla_h z_{\varepsilon}\|_{L^{2}} \|z_{\varepsilon}\|_{L^{2}} \cr
\leq \, & \frac{\nu_h}{8}\|\nabla_h z_{\varepsilon}\|_{L^2}^{2}
+\frac{C_0^6}{\nu_h}  \|z_{\varepsilon}\|_{L^{2}}^2 \Big(9 \|\nabla_h 
v_{\varepsilon}\|_{L^2}^2
+ \|\nabla_h \partial_3 v_{\varepsilon}\|_{L^2}^2 \Big).
\end{split}
\end{equation}
Applying Lemma \ref{LestLpLq} again, we
can write, for $0 \leq t\leq T_{\varepsilon}$,
\begin{equation*}
\begin{split}
|(v_{\varepsilon,3} \partial_3 v_{\varepsilon},z_{\varepsilon})| \leq
\, &  \|v_{\varepsilon,3}\|_{L^{\infty}_v(L^2_h)} \|\partial_3
v_{\varepsilon}\|_{L^{2}_v(L^4_h)}
\|z_{\varepsilon}\|_{L^{2}_v(L^4_h)} \cr
\leq \,&C_0^3 \big(\|v_{\varepsilon,3}\|_{L^2}^{1/2} \|\partial_3
v_{\varepsilon,3}\|_{L^2}^{1/2} + \|v_{\varepsilon,3}\|_{L^2}\big) \cr
& \times \|\partial_3 v_{\varepsilon}\|_{L^2}^{1/2}
\|\partial_3 \nabla_h v_{\varepsilon}\|_{L^2}^{1/2}
\|z_{\varepsilon}\|_{L^2}^{1/2} \|\nabla_h
z_{\varepsilon}\|_{L^2}^{1/2} .
\end{split}
\end{equation*}
Using the Young inequality several times, we deduce from the above
estimate that
\begin{equation*}
\begin{split}
|(v_{\varepsilon,3} &\partial_3 v_{\varepsilon},z_{\varepsilon})| \cr
\leq  \,& \frac{3C_0^4}{2\nu_h^{1/3}} \Big( \|v_{\varepsilon}\|_{L^2}^{2/3}
\|\partial_3 v_{\varepsilon}\|_{L^2}^{2/3} +
\|v_{\varepsilon}\|_{L^2}^{4/3}\Big)
\|\partial_3 v_{\varepsilon}\|_{L^2}^{2/3}
\|\partial_3 \nabla_h v_{\varepsilon}\|_{L^2}^{2/3}
\|z_{\varepsilon}\|_{L^2}^{2/3} \cr
&+\frac{\nu_h}{8}\|\nabla_h z_{\varepsilon}\|_{L^2}^{2},
\end{split}
\end{equation*}
and thus that
\begin{equation}
\label{zepsaux4}
\begin{split}
|(v_{\varepsilon,3} \partial_3 v_{\varepsilon},z_{\varepsilon})|
\leq  \,  & \frac{\nu_h}{8}\|\nabla_h z_{\varepsilon}\|_{L^2}^{2}
+  \frac{C_0^4}{\nu_h} \|\partial_3 \nabla_h v_{\varepsilon}\|_{L^2}^{2}
\|z_{\varepsilon}\|_{L^2}^{2} + C_0^4
\|v_{\varepsilon}\|_{L^2}^{2} \|\partial_3 v_{\varepsilon}\|_{L^2} \cr
&+ C_0^4
\|v_{\varepsilon}\|_{L^2}\|\partial_3 v_{\varepsilon}\|_{L^2}^2 .
\end{split}
\end{equation}
Finally, arguing as above by applying Lemma \ref{LestLpLq} and the
Young inequality, we get the estimate
\begin{equation}
\label{zepsaux5}
\begin{split}
|(v_{\varepsilon,h} \nabla_h v_{\varepsilon},z_{\varepsilon})|  \leq
\,  & C_0^2\|\nabla_h v_{\varepsilon}\|_{L^{\infty}_v(L^2_h)} \|\nabla_h
v_{\varepsilon}\|_{L^{2}}^{1/2} \|v_{\varepsilon}\|_{L^{2}}^{1/2}
\|\nabla_h z_{\varepsilon}\|_{L^{2}}^{1/2}
\|z_{\varepsilon}\|_{L^{2}}^{1/2} \cr
\leq \,  &C_0^3 \Big( \|\nabla_h v_{\varepsilon}\|_{L^2}^{1/2}
\|\nabla_h \partial_3 v_{\varepsilon}\|_{L^2}^{1/2}
+ \|\nabla_h v_{\varepsilon}\|_{L^2} \Big) \cr
& \times \|\nabla_h
v_{\varepsilon}\|_{L^{2}}^{1/2} \|v_{\varepsilon}\|_{L^{2}}^{1/2}
\|\nabla_h z_{\varepsilon}\|_{L^{2}}^{1/2}
\|z_{\varepsilon}\|_{L^{2}}^{1/2} \cr
\leq \, & \frac{\nu_h}{8}\|\nabla_h z_{\varepsilon}\|_{L^2}^{2}
+ 2C_0^4 \|\nabla_h v_{\varepsilon}\|_{L^2}^2
\|v_{\varepsilon}\|_{L^2} \cr
&+ \frac{C_0^4}{2\nu_h} \|z_{\varepsilon}\|_{L^{2}}^{2} \Big(
\|\nabla_h v_{\varepsilon}\|_{L^2}^2 + \|\nabla_h \partial_3
v_{\varepsilon}\|_{L^2}^2\Big).
\end{split}
\end{equation}
Integrating the equality \eqref{zepsaux1} from $0$ to $t$, taking
into account the estimates \eqref{zepsaux2} to
\eqref{zepsaux5} and, applying Gronwall lemma yield, for $0 \leq t 
\leq T_{\varepsilon}$,
\begin{equation*}
\begin{split}
  \| z_{\varepsilon}(t)\|_{L^{2}}^{2} +  \nu_h \int_{0}^{t}
\|\nabla_h z_{\varepsilon}(s)&\|_{L^{2}}^{2} ds  +
2 \varepsilon \int_{0}^{t} \|\partial_3
  z_{\varepsilon}(s)\|_{L^{2}}^{2} ds \cr
   & \leq \int_{0}^{t} B_1(s)
  \|z_{\varepsilon}(s)\|_{L^{2}}^{2}ds +
  \int_{0}^{t} B_2(s) ds,
\end{split}
\end{equation*}
and
\begin{equation}
\label{zepsaux6}
  \| z_{\varepsilon}(t)\|_{L^{2}}^{2} +  \nu_h \int_{0}^{t}
\|\nabla_h z_{\varepsilon}(s)\|_{L^{2}}^{2} ds \leq 2 \int_{0}^{t}
B_2(s) ds \,\big( \exp \,2\int_{0}^{t} B_1(s) ds \big) ,
  \end{equation}
where
\begin{equation}
\label{zepsaux7}
\begin{split}
B_1(s)=&2 \Big[\frac{C_0^4}{\nu_h} (3C_0^2 +2) \|\nabla_h \partial_3
v_{\varepsilon}(s)\|_{L^2}^2
+  \frac{C_0^4}{2\nu_h} (18 C_0^2 +1) \|\nabla_h
v_{\varepsilon}(s)\|_{L^2}^2 \cr
&+ \frac{2 C_0^6}{\nu_h}  \| \partial_3
v_{\varepsilon}(s)\|_{L^2}^2 + C_0^4\| \partial_3
v_{\varepsilon}(s)\|_{L^2} \cr
B_2(s) =& \, 2C_0^4\Big( \| v_{\varepsilon}(s)\|_{L^2}^2
\| \partial_3 v_{\varepsilon}(s)\|_{L^2}
+\| v_{\varepsilon}(s)\|_{L^2}\| \partial_3
v_{\varepsilon}(s)\|_{L^2}^2  \cr
& +2 \| v_{\varepsilon}(s)\|_{L^2} \| \nabla_h
v_{\varepsilon}(s)\|_{L^2}^2 \Big).
\end{split}
\end{equation}
The inequalities \eqref{zepsaux6} and \eqref{zepsaux7} and the
estimates \eqref{vestimeaux1},  \eqref{vestimeaux2}, and
\eqref{vborneinteg} imply that, for $0 \leq t \leq T_{\varepsilon}$,
\begin{equation}
\label{zepsaux8}
\begin{split}
\int_{0}^{t} \|\nabla_h z_{\varepsilon}(s)\|_{L^{2}}^{2} ds
\leq &\frac{c_1}{\nu_h}\| v_0\|_{L^2} \Big( t\| v_0\|_{L^2} \|
\partial_3 v_0\|_{L^2} + t \| \partial_3 v_0\|_{L^2}^2 + \delta_0
\Big) \cr
\times &\exp c_2 \Big( t \| \partial_3 v_0\|_{L^2} +
\nu_h^{-1} \| \partial_3 v_0\|_{L^2}^2 (t +\nu_h^{-1}) + \nu_h^{-1}
\delta_0\Big) ,
\end{split}
\end{equation}
where $c_1$ and $c_2$ are two positive constants independent of $v_0$
and $\varepsilon$. Since
$$
\|v_0\|_{H^{0,1}(\tilde{Q})} \leq  \|\Sigma
U_0\|_{H^{0,1}(\tilde{Q})} + \eta\leq  \|\Sigma
U_0\|_{H^{0,1}(\tilde{Q})} + \frac{(\delta_0 \nu_h)^{1/2}}{4},
$$
the inequality \eqref{zepsaux8} shows that we can choose $\delta_0
>0$ and $T_0>0$ independent of $\varepsilon$ and $v_0$ such that
\begin{equation}
\label{zepsaux9}
\begin{split}
\frac{c_1}{\nu_h}\| v_0\|_{L^2} &\Big( T_0\| v_0\|_{L^2} \|
\partial_3 v_0\|_{L^2} + T_0\| \partial_3 v_0\|_{L^2}^2 + \delta_0
\Big) \cr
\times &\exp c_2 \Big( T_0\| \partial_3 v_0\|_{L^2} +
\nu_h^{-1} \| \partial_3 v_0\|_{L^2}^2 (T_0 +\nu_h^{-1}) + \nu_h^{-1}
\delta_0\Big)\leq \frac{\delta} {2}.
\end{split}
\end{equation}
As we have explained at the beginning of the proof, the properties
\eqref{zepsaux8} and \eqref{zepsaux9} imply that the maximal time
$T_{\varepsilon}$ of existence of $z_{\varepsilon}$ and of
$u_{\varepsilon}$ is larger than $T_0$ and that
$$
\int_{0}^{T_0} \|\nabla_h z_{\varepsilon}(s)\|_{L^{2}}^{2} ds \leq 
\frac{\delta} {2}.
$$
Thus the inequalities \eqref{bornexist2} are proved, which concludes
the proof of the theorem.
\end{proof}

\begin{remark} We notice that the classical approach to show the
local in time existence result for large initial data, consisting in
the decompostion
of the problem into a large data linear problem and a small data,
perturbed nonlinear problem, does not work here, since we cannot
prove that, for initial data $U_0$ in $\widetilde{H}^{0,1}(Q)$, the
quantity $\|(I- \P_{k_0})v_0\|_{H^{0,1}(\tilde{Q})}$ is small. In the
above proof, the decomposition of the linear system into two systems,
one with smooth initial data $\P_{k_0}v_0$ and the other one with
small initial data $(I- \P_{k_0})v_0$ avoids this difficulty. Indeed,
in the
estimates \eqref{v2lemme4} and \eqref{v2estime}, we only need to know
that $\|(I- \P_{k_0})v_0\|_{L^2(\tilde{Q})}$ is small.
\end{remark}





\begin{thebibliography}{99}






\bibitem{CDGG00} J.-Y. Chemin, B. Desjardins, I. Gallagher and E.
Grenier, Fluids with anisotropic viscosity, Special issue for R.
Temam's 60th birthday, {\it M2AN. Mathematical Modelling and Numerical
Analysis}, \textbf{34}, no.  2, 2000, p. 315-335.


\bibitem{CDGG06} J.-Y. Chemin, B. Desjardins, I. Gallagher and E.
Grenier, Mathematical Geophysics: An introduction to rotating fluids
and to the Navier-Stokes equations, {\it Oxford University Press},
2006.

\bibitem{CheminZhang} J.-Y. Chemin and P. Zhang, On the global
wellposedness to the $3$-D incompressible anisotropic Navier-Stokes
equations, {\it Commun. Math. Phys.}, \textbf{272}, 2007,
p. 529-566.

\bibitem{ConstantinFoias}
P.~Constantin and C.~Foia{\c{s}}, Navier-{S}tokes equations, University
   of Chicago Press, Chicago, 1988.

\bibitem{FuKa} H. Fujita and T. Kato, On the Navier-Stokes initial
value problem I, {\it Arch. for Rational Mechanics and Analysis},
\textbf{16}, 1964, p. 269-315.

\bibitem{GrMa97} E. Grenier and N. Masmoudi,
Ekman layers of rotating fluids, the case of well
prepared initial data, {\it Commun. in Partial Differential
Equations}, \textbf{22}, 1997, p. 953-975.

\bibitem{If99} D. Iftimie, The resolution of the Navier-Stokes
equations in anisotropic spaces, {\it Revista Matem\'atica
iberoamericana}, \textbf{15}, 1999, p.1-36.


\bibitem{If02} D. Iftimie, A uniqueness result for the Navier-Stokes
equations with vanishing vertical viscosity, {\it SIAM Journal on
Mathematical Analysis}, \textbf{33}, 2002, no.  6, p. 1483-1493.

\bibitem{IfPl} D.~Iftimie and G. Planas, {\it Inviscid limits
for the Navier-Stokes equations with Navier friction boundary conditions},
Nonlinearity, \textbf{19} (2006), p. 899-918.


\bibitem{Leray} J. Leray, {\it Essai sur le mouvement d'un liquide visqueux
emplissant l'espace},  Acta Matematica, \textbf{63}, 1933,
p. 193-248.

\bibitem{Pai04} M. Paicu, {\it \'Etude asymptotique pour les fluides
anisotropes en rotation rapide dans le cas p\'eriodique}, Journal
de Math\'{e}matiques Pures et Appliqu\'{e}es, \textbf{83}, no.  2, 2004,
p. 163-242.

\bibitem{Pai05} M. Paicu, {\it \'Equation p\'eriodique de Navier-Stokes
sans viscosit\'e dans une direction}, Communications in Partial
Differential Equations, \textbf{30}, 2005, no.  7-9, p. 1107-1140.

\bibitem{Pai05bis} M. Paicu, {\it \'Equation anisotrope de Navier-Stokes dans
des espaces critiques}, Revista Matem\'atica Iberoamericana,
\textbf{21}, 2005, no.  1, p. 179-235.

\bibitem{Pedlovsky}
J. Pedlovsky, {\it Geophysical fluid dynamics}, Springer-Verlag, (1979).


thin $3${D} domains.

$3$D domains {III}:

\bibitem{SoloScadi}
V. A. Solonnikov and V. E. \v{S}\v{c}adilov, {\it On a boundary value
problem for a stationary system of Navier-Stokes equations}, Trudy
Mat. Inst. Steklov, \textbf{125} (1973), p. 186-199.

\bibitem{Taylor}
G.I. Taylor, {\it Experiments on the motion of solid bodies in
rotating fluids}, Proc. Roy. Soc. A \textbf{104} (1923), p. 213-218.

\bibitem{Temam}
   R. Temam, {\it Navier-Stokes Equations}, Revised
    Edition, Studies in mathematics and its Applications 2, North-Holland,
    1979.

\end{thebibliography}
\end{document}